\def\bp{\begin{proof}}
\def\ep{\end{proof}}
\def\ho{\hookrightarrow}
\def\curl{\operatorname{curl}}
\definecolor{gr}{rgb}   {0.,   0.8,   0. } 
\definecolor{bl}{rgb}   {0.,   0.5,   1. } 
\definecolor{mg}{rgb}   {0.7,  0.,    0.7} 
\def\XXint#1#2#3{{\setbox0=\hbox{$#1{#2#3}{\int}$}
     \vcenter{\hbox{$#2#3$}}\kern-.5\wd0}}
\newcommand{\Bk}{\color{black}}
 \numberwithin{equation}{section}
\def\VMO{\text{\rm VMO}}
 \definecolor{db}{rgb}{0.0,0.0,0.8} 
\definecolor{dg}{rgb}{0.0,0.55,0.14}
\definecolor{dr}{rgb}{0.5,0,0.07}
\def\ho{\hookrightarrow}
\theoremstyle{definition}
\newtheorem{case}{Case}
 \swapnumbers\newtheorem{theorem}{Theorem}[section]
 \swapnumbers\newtheorem{proposition}[theorem]{Proposition}
 \swapnumbers\newtheorem{lemma}[theorem]{Lemma}
 \swapnumbers\newtheorem{corollary}[theorem]{Corollary}
\theoremstyle{definition}
\theoremstyle{definition}
 \theoremstyle{definition}
\theoremstyle{definition}
\theoremstyle{definition}
\newtheorem{definition}[theorem]{Definition}
\theoremstyle{definition}
\newtheorem{remark}[theorem]{Remark}
\theoremstyle{definition}
\newcounter{step}
\def\be{\begin{equation}}
\def\ee{\end{equation}}
\def\bes{\begin{equation*}}
\def\ees{\end{equation*}}
\def\bt{\begin{theorem}}
\def\et{\end{theorem}}
\def\bpr{\begin{proposition}}
\def\epr{\end{proposition}}
\def\bl{\begin{lemma}}
\def\el{\end{lemma}}
\def\bc{\begin{corollary}}
\def\ec{\end{corollary}}
\def\br{\begin{remark}}
\def\er{\end{remark}}
\def\ben{\begin{enumerate}}
\def\bena{\begin{enumerate}[a)]}
\def\een{\end{enumerate}}
\def\bit{\begin{itemize}}
\def\iit{\end{itemize}}
\def\supp{\operatorname{supp}}
\def\tr{\operatorname{tr}}
\def\div{\operatorname{div}}
\DeclareMathAlphabet{\mathonebb}{U}{bbold}{m}{n}
\newcommand{\one}{\ensuremath{\mathonebb{1}}}
\def\R{{\mathbb R}}
\def\N{{\mathbb N}}
\def\C{{\mathbb C}}
\def\Z{{\mathbb Z}}
 \def\T{{\mathbb T}}
\def\la{\langle}
\def\ra{\rangle}
\def\fo{\forall\, }
\def\iff{\Longleftrightarrow}
\def\va{\varphi}
\def\d{\displaystyle}
\def\im{\imath}
\def\ve{\varepsilon}
\def\p{\partial}
\def\l{\label}
\def\na{\nabla}
\def\so{{\mathbb S}^1}
\date{\today}
\title{Lifting in Besov spaces}
\author{
Petru Mironescu    \thanks{Universit\'e de Lyon,  CNRS UMR 5208, Universit\'e Lyon 1, Institut Camille Jordan, 43 blvd. du 11 novembre 1918, F-69622 Villeurbanne cedex, France. Email address: mironescu$@$math.univ-lyon1.fr}         \and
Emmanuel Russ \thanks{Universit\'e Grenoble Alpes, CNRS UMR 5582, 100 rue des math\'ematiques, 38610 Gieres, France. Email address: emmanuel.russ$@$univ-grenoble-alpes.fr}
\and
Yannick Sire\thanks{Johns Hopkins University, Krieger Hall, Baltimore MD, USA. Email address: sire$@$math.jhu.edu}
}
\begin{document}

\maketitle

\section{Introduction}
${}$

Let $\Omega\subset \R^n$ be a bounded simply connected domain and $u:\Omega\rightarrow \so$ a continuous (resp. $C^k$, $k\geq 1$) function. It is a well-known fact that there exists a continuous (resp. $C^k$) real-valued function $\va$ such that $u=e^{\im\va}$. In other words, $u$ has a continuous (resp. $C^k$) lifting. \par
\noindent The analogous problem when $u$ belongs to the fractional Sobolev space $W^{s,p}$, $s>0$, $1\le p<\infty$, received an complete answer in \cite{lss}. Let us briefly recall the results:
\begin{enumerate}
\item when $n=1$, $u$ has a lifting in $W^{s,p}$ for all $s>0$ and all $p\in [1,\infty)$,
\item when $n\geq 2$ and $0<s<1$, $u$ has a lifting in $W^{s,p}$ if and only if $sp<1$ or $sp\geq n$,
\item when $n\ge 2$ and  $s\geq 1$, $u$ has a lifting in $W^{s,p}$ if and only if $sp\geq 2$.
\end{enumerate}

 Further developments in the Sobolev context can be found in \cite{bethuelchiron,nguyenphase,mironescuphase,mironescucras2}. 

 In the present paper, we address the corresponding question in the framework of Besov spaces. More specifically, given $s, p, q$ in suitable ranges defined later, we ask whether a map $u\in B^s_{p,q}(\Omega ; \so)$ can be lifted as $u=e^{\im\va}$, with $\va\in B^s_{p,q}(\Omega ; \mathbb R)$. We say that $B^s_{p,q}$ has the lifting property if and only if the answer is positive.

When dealing with circle-valued functions and their phases, it is natural to consider only maps in $L^1_{loc}$. This is why we assume that $s>0$,\footnote{ However, we will discuss an appropriate version of the lifting problem when $s\le 0$; see Remark \ref{aa1} and Case \ref{T} below. } and we take
the exponents $p$ and $q$  in the classical range $p\in [1,\infty)$, $q\in [1,\infty]$.\footnote{ We discard the uninteresting case where  $p=\infty$. In that case, maps in $B^s_{\infty,q}$ are continuous. Arguing as in Case \ref{tri} below, we obtain the existence of a $B^s_{\infty,q}$ phase for every $u\in B^s_{\infty,q}(\Omega ; \so)$.}

 Since Besov spaces are microscopic modifications of Sobolev (or Slobodeskii) spaces, one expects a global picture similar to the one described before for Sobolev spaces. The analysis in Besov spaces  is indeed partly similar to the one in Sobolev spaces, as far as the results and the techniques are concerned. However, several difficulties occur and some cases still remain open. Thus, the analysis of the lifting problem leads us to prove several new properties for Besov spaces (in connection with restriction or absence of restriction properties, sums of integer valued functions which are constant, products of functions in Besov spaces, disintegration properties for the Jacobian), which are interesting in their own right. We also provide detailed arguments for classical properties (some embeddings, Poincar\'e inequalities) which could not be precisely located in the literature.
 
\medskip
 Let us now describe more precisely our results and methods. When $sp>n$, functions in $B^s_{p,q}$ are continuous, which readily implies that $B^s_{p,q}$ has the lifting property (Case \ref{tri}).
 
\medskip
 In the case where $sp<1$, we rely on a characterization of $B^s_{p,q}$ in terms of the Haar basis \cite[Th\'eor\`eme 5]{bourdaud}, to prove that $B^s_{p,q}$ has the lifting property (Case \ref{A}).
 
\medskip
Assume now that $0<s\leq 1$, $sp=n$ and $q<\infty$.  Let $u\in B^s_{p,q}(\Omega ; \so)$ and let $F(x,\ve):=u\ast\rho_{\ve}$, where $\rho$ is a standard mollifier. Since $B^s_{p,q}\hookrightarrow \VMO$,  for all $\ve$ sufficiently small and  all $x\in \Omega$ we have $1/2<\left\vert F(x,\ve)\right\vert\le 1$. Writing $F(x,\ve)/\left\vert F(x,\ve)\right\vert=e^{\im\psi_\ve}$, where $\psi_\ve$ is $C^{\infty}$, and relying on a slight modification of the trace theory for weighted Sobolev spaces developed in \cite{tracesoldnew}, we conclude, letting $\varepsilon$tend to $0$, that $u=e^{\im\psi_0}$, where $\psi_0=\lim_{\ve\to 0}\psi_\ve\in B^s_{p,q}$, and therefore $B^s_{p,q}$ still has the lifting property (Case \ref{X}). 

\medskip
Consider now the case where $s>1$ and $sp\geq 2$. Arguing as in \cite[Section 3]{lss}, it is easily seen that the lifting property for $B^s_{p,q}$ will follow from the following property: given $u\in B^s_{p,q}(\Omega; \so)$, if $F:=u\wedge\na u\in L^p(\Omega ; \R^n)$, then $(*)$ $\curl F=0$. The proof of $(*)$ is much more involved than the corresponding one for $W^{s,p}$ spaces \cite[Section 3]{lss}. It relies on a disintegration argument for the Jacobians, more generally applicable in $W^{1/p,p}$. This argument, in turn, relies on the fact that $\curl F=0$ when $u\in \VMO$ and $n=2$, and a slicing argument. In particular, we need a {\it restriction property for Besov spaces}, namely the fact that, for $s>0$, $1\le p<\infty$ and $1\le q\le p$, for all $f\in B^s_{p,q}$, the partial maps of $f$ still belong to $B^s_{p,p}$ (see Lemma \ref{oa1} below). Thus, we obtain that, when $s>1$ and $1\leq p<\infty$, $B^s_{p,q}$ does have the lifting property when [$1\le q<\infty$, $n=2$, and $sp=2$], or [$1\le q\le p$, $n\ge 3$, and $sp= 2$], or [$1\le q\le \infty$, $n\ge 2$, and $sp> 2$].

 One can improve the conclusion of Lemma \ref{oa1} as follows.  For $s>0$, $1\le p<\infty$ and $1\le q\le p$, for all $f\in B^s_{p,q}$, the partial maps of $f$  belong to $B^s_{p,q}$ (Proposition \ref{qh1}). We emphasize the fact that this type of property holds only under the crucial assumption $q\le p$.
 More precisely, if $q>p$ and $s>0$, then we exhibit a compactly supported function $f\in B^s_{p,q}(\R^2)$ such that, for almost every $x\in (0,1)$, $f(x,\cdot)\notin B^s_{p,\infty}(\R)$ (Proposition \ref{l7.26}). This phenomenon, which has not been noticed before,  shows a picture strikingly different from the one for $W^{s,p}$, and even more generally for Triebel-Lizorkin spaces \cite[Section 2.5.13]{triebel2}.  
 
\medskip
 Let us return to the case when $0<s<1$, $1\le p< \infty$ and $n\geq 2$. Assume now that [$1\le q<\infty$ and $1\leq sp <n$], or [$q=\infty$ and $1< sp <n$]. In this case, we show that $B^s_{p,q}$ does not have the lifting property. The argument uses embedding theorems and the following fact, for which we provide a proof: let $s_i>0$, $1\le p_i<\infty$, and [$s_jp_j=1$ and $1\le q_j<\infty$], or [$s_jp_j>1$ and $1\le q_j\le\infty$], $i=1,2$.  Then, if $f_i\in B^{s_i}_{p_i,q_i}$ and $f_1+f_2$ only takes integer values, then the function $f_1+f_2$ is constant. 
 
 \medskip
Assume finally that $0<s<\infty$, $1\le p<\infty$, $n\ge 2$ and [$1\le q< \infty$ and $1\le sp<2$] or [$q=\infty$ and $1\le sp\le 2$]. In this case,  $B^s_{p,q}$ does not have the lifting property either. We provide a counterexample of topological nature, inspired by \cite[Section 4]{lss}: namely, the function $\d u(x)=\frac{(x_1,x_2)}{\d\left(x_1^2+x_2^2\right)^{1/2}}$ belongs to $B^s_{p,q}$ but has no lifting in $B^s_{p,q}$. 

\medskip
 Contrary to the case of Sobolev spaces, some cases remain open. A first case occurs when $s>1$, $1\le p<\infty$, $p<q<\infty$, $n\ge 3$, and $sp=2$. In this situation, since the restriction property for $B^s_{p,q}$ does not hold, the argument sketched before does not work any longer and we do not know if $B^s_{p,q}$ has the lifting property.
 
  The case where $s=1$, $1\le p<\infty$, $n\ge 3$, and [$1\le q<\infty$ and $2\le p<n$] or [$q=\infty$ and $2< p\le n$] is also open (except when $s=1$ and $p=q=2$, since in this case, $B^1_{2,2}=W^{1,2}$ has the lifting property). This is related to the fact that it is not known whether the map $\va\mapsto e^{\im\va}$ maps $B^1_{p,q}$ into itself.

 When $1\le p<\infty$, $s=1/p$ and $q=\infty$,  we do not know if $B^{1/p}_{p,\infty}$ has the lifting property. In particular, it is unclear whether the Haar system provides a basis of $B^{1/p}_{p,\infty}$. The case where $q=\infty$, $n\le  p<\infty$, $n\ge 3$ and $s=n/p$ is also open. Indeed, $B^s_{p,q}$ is not embedded into $\VMO$ in this case, and the argument briefly described above is not applicable any more.  
 
\medskip

 Let us summarize the main results of this paper concerning the lifting problem.
We start with positive cases.

\begin{theorem} \label{positive}
Let $s>0$, $1\le p<\infty$, $1\le q\le \infty$. The lifting problem has a positive answer in the following cases:
\begin{enumerate}
\item $s>0$, $1\le q\le\infty$, and $sp>n$,
\item $0<s<1$, $1\le q\le\infty$, and $sp<1$,
\item $0<s\le 1$, $1\le q<\infty$, and $sp=n$,
\item 
\begin{enumerate}
\item $s>1$, $1\le q<\infty$, $n=2$, and $sp=2$,
\item $s>1$, $1\le q\le p$, $n\ge 3$, and $sp= 2$,
\item $s>1$, $1\le q\le \infty$, $n\ge 2$,
and $sp> 2$.
\end{enumerate}
\end{enumerate}
\end{theorem}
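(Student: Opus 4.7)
The plan is to prove the theorem case by case, following the road map sketched in the introduction; each of the four items requires a genuinely different technique.

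Items (1) and (2) are the easiest. For item (1), the Sobolev embedding $B^s_{p,q}(\Omega)\hookrightarrow C^0(\overline\Omega)$ available when $sp>n$ makes $u$ continuous, so the classical continuous lifting on a simply connected domain produces $\va\in C^0$ with $u=e^{\im\va}$. I would then upgrade $\va$ to $B^s_{p,q}$ by covering $\overline\Omega$ with finitely many sets on each of which $u$ avoids a point of $\so$, writing $\va=g\circ u$ locally with $g$ a smooth branch of the argument, and using that, when $sp>n$, $B^s_{p,q}$ is a Banach algebra stable under composition by smooth functions. For item (2), I would rely on the Haar basis characterization of $B^s_{p,q}$ with $sp<1$ proved by Bourdaud: approximating $u$ by functions that are constant on dyadic cubes of side $2^{-k}$, one lifts each such approximation cube by cube (the phase is a constant on each cube, defined modulo $2\pi$), and sums the resulting Haar expansions; the unconditional basis structure controls $\|\va\|_{B^s_{p,q}}$ by $\|u\|_{B^s_{p,q}}$.

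For item (3), the critical case $sp=n$ with $q<\infty$, I would follow the mollification scheme indicated in the introduction. Setting $F(\cdot,\ve):=u*\rho_\ve$, the embedding $B^s_{p,q}\hookrightarrow \VMO$ (available precisely because $q<\infty$) ensures $|F|\geq 1/2$ uniformly for $\ve$ small, so that $F/|F|$ is smooth and $\so$-valued and admits a smooth phase $\psi_\ve$. The heart of the argument is to pass to the limit $\ve\to 0$ and to show that $\psi_\ve\to\psi_0\in B^s_{p,q}$ with $u=e^{\im\psi_0}$; this requires a suitable adaptation of the trace theory for weighted Sobolev spaces of \cite{tracesoldnew}, extended to the Besov scale, together with standard stability of smooth phases as soon as the absolute value stays away from zero.

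For item (4), with $s>1$ and $sp\geq 2$, I would follow \cite[Section 3]{lss} and reduce the lifting problem to the curl-free identity $(*)$ $\curl(u\wedge\na u)=0$. Once $(*)$ is established on the simply connected domain $\Omega$, the $L^p$ vector field $u\wedge\na u$ is a gradient $\na\va$, and standard product and composition estimates in Besov spaces (combined with $u\in B^s_{p,q}$) promote $\va$ from $W^{1,p}$ to $B^s_{p,q}$. The main obstacle of the entire theorem is the proof of $(*)$: in dimension $n=2$ it follows from the $\VMO$ theory of the distributional Jacobian, while for $n\geq 3$ I would use a disintegration argument on affine $2$-planes, reducing the curl-free condition to the $n=2$ case on almost every $2$-dimensional slice. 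This slicing forces the restriction property Lemma \ref{oa1}, whose validity under the assumption $q\leq p$ (and failure in general, see Proposition \ref{l7.26}) is precisely what dictates the admissible ranges of $q$ in sub-cases (a), (b), (c).
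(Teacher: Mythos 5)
Your sketch matches the paper's strategy in all four regimes, including the role of $\VMO$ embeddings, the extension/trace machinery for $sp=n$, and the disintegration-on-$2$-planes argument (hence the restriction property and the constraint $q\le p$ in dimension $\ge 3$) for $s>1$, $sp\ge2$. The one point worth flagging, because as written your argument for item (2) would not close, is the choice of constant phase on each dyadic generation: after forming the normalized dyadic averages $U_j:=E_j(u)/|E_j(u)|$, you cannot pick the constant phase $\va_j$ modulo $2\pi$ arbitrarily on each cube and hope that \enquote{unconditionality} of the Haar basis does the work; the phases must be chosen \emph{recursively}, with $\va_j$ the phase of $U_j$ closest to $\va_{j-1}$, which is exactly what yields the pointwise control $|\va_j-\va_{j-1}|\le\pi|U_j-U_{j-1}|\lesssim|u-E_j(u)|+|u-E_{j-1}(u)|$ and hence $\sum_j 2^{sjq}\|\va_j-\va_{j-1}\|_{L^p}^q\lesssim\|u\|_{B^s_{p,q}}^q$ via the Haar characterization of $B^s_{p,q}$ for $sp<1$. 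With that choice made explicit, everything else in your sketch is the paper's proof.
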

\noindent The negative cases are as follows: 
\begin{theorem} \label{negative}
Let $s>0$, $1\le p<\infty$, $1\le q\le \infty$. The lifting problem has a negative answer in the following cases:
\begin{enumerate}
\item
\begin{enumerate}
\item $0<s<1$, $1\le q<\infty$, $n\ge 2$, and $1\leq sp <n$,
\item $0<s<1$, $q=\infty$, $n\ge 2$, and $1< sp <n$,
\end{enumerate}
\item
\begin{enumerate}
\item $0<s<\infty$, $1\le q<\infty$, $n\ge 2$, and $1\le sp<2$,
\item $0<s<\infty$, $1\le p<\infty$, $q=\infty$, $n\ge 2$, and $1<sp\le 2$.
\end{enumerate}
\end{enumerate}
\end{theorem}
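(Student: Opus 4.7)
The plan is to handle cases (1) and (2) separately, mirroring the two kinds of obstruction identified in the introduction: a non-topological ``integer-sum rigidity'' obstruction for (1), and a topological (vortex) obstruction for (2).

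For case (2), I would work with the vortex map $u(x)=(x_1,x_2)/\sqrt{x_1^2+x_2^2}$, viewed on a ball $\Omega\subset\R^n$ but depending only on $(x_1,x_2)$. The argument has two steps. \emph{Membership:} a direct computation with the difference characterization of the Besov seminorm, using radial scaling, shows $u\in B^s_{p,q}(\Omega;\so)$ throughout the range $1\le sp<2$ (with $q<\infty$) and $1<sp\le 2$ (with $q=\infty$); the borderline case $sp=2,\,q=\infty$ is precisely the one in which the logarithmic divergence in the seminorm is absorbed by the $\ell^\infty$-summation, whereas $\ell^q$ with $q<\infty$ is forced to diverge. \emph{Non-liftability:} if $u=e^{\im\psi}$ with $\psi\in B^s_{p,q}$, the case $n=2$ is immediate since any measurable lift of the planar vortex has a $2\pi$ jump across any ray from the origin, precluding membership in $B^s_{p,q}$ with $sp\ge 1$. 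For $n\ge 3$ and $q\le p$, Proposition \ref{qh1} provides, for a.e.\ $(x_3,\dots,x_n)$, a slice $\psi(\cdot,\cdot,x_3,\dots,x_n)\in B^s_{p,q}(\R^2)$ that is a measurable lift of the planar vortex, reducing to the $n=2$ case. For $p<q$, where Proposition \ref{qh1} fails, one replaces the slicing by a direct Fubini-type comparison of the seminorms in the $(x_1,x_2)$-directions, exploiting the exact radial structure of $u$ to reach the same contradiction.

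For case (1), which is non-topological since $sp$ can be as large as $n$, I would adapt the counterexample of \cite[Section 4]{lss} via the integer-sum rigidity result stated in the introduction. Assume, for contradiction, that $B^s_{p,q}$ has the lifting property. Construct an explicit oscillating phase $\varphi_0$, modelled on $\varphi_0(x)=c|x|^{-\alpha}$ smoothly cut off outside a small ball, with $\alpha=\alpha(s,p,q,n)>0$ chosen so that $u:=e^{\im\varphi_0}\in B^s_{p,q}(\Omega;\so)$ but $\varphi_0\notin B^s_{p,q}$, while $\varphi_0$ nonetheless lies in some auxiliary Besov space $B^{s'}_{p',q'}$ satisfying the hypotheses of the rigidity result ($s'p'>1$, or $s'p'=1$ with $q'<\infty$). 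Any lifting $\psi\in B^s_{p,q}$ of $u$ would then satisfy $(\psi-\varphi_0)/(2\pi)\in\Z$ a.e., and rigidity would force this difference to be constant; hence $\varphi_0\in B^s_{p,q}$, a contradiction.

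The main obstacle is twofold. In case (2), the technically delicate sub-case is $q=\infty$ with $sp=2$, where the slicing tool of Proposition \ref{qh1} is unavailable (Proposition \ref{l7.26}) and one must exploit the radial symmetry of $u$ directly rather than appeal to a generic restriction theorem. In case (1), the heart of the matter is the simultaneous balancing of three conditions on $(\alpha,s',p',q')$: membership of $u$, non-membership of $\varphi_0$, and applicability of rigidity. The exclusions $sp=1$ with $q=\infty$ and the strict inequality $1<sp$ there reflect exactly the thresholds at which this balancing ceases to be possible, in parallel with the open case of $B^{1/p}_{p,\infty}$ mentioned in the introduction.
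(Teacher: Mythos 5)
Your overall strategy -- an ``integer-sum rigidity'' obstruction for Part 1 and a topological (vortex) obstruction for Part 2 -- matches the paper's, and the two counterexamples you pick ($\psi$ with $e^{\im\psi}\in B^s_{p,q}$ but $\psi\notin B^s_{p,q}$, and $u(x)=(x_1,x_2)/|(x_1,x_2)|$) are the right ones. But there is a genuine gap in your Part 2, and Part 1 is also handled differently (and more cleanly) in the paper.

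\paragraph{Part 2: slicing is the wrong tool, and the $p<q$ sub-case breaks.} You reduce the non-liftability of the vortex to dimension $2$ by slicing $\psi$ in the $(x_1,x_2)$-plane. For $q\le p$ you invoke Proposition \ref{qh1}, and for $p<q$ you gesture at a ``Fubini-type comparison exploiting the radial structure of $u$.'' The latter does not go through: the object you need to slice is $\psi$, not $u$, and the unknown lift $\psi$ has no radial structure to exploit. Proposition \ref{l7.26} is precisely the statement that for $p<q$ there exist $f\in B^s_{p,q}(\R^2)$ whose a.e.\ slices fail to lie in $B^s_{p,\infty}(\R)$, so the slicing step is not merely difficult there, it is false in general. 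Also, your base case $n=2$ is informal: ``any measurable lift has a $2\pi$ jump across any ray'' is not a rigorous statement, since a measurable lift is only determined modulo $2\pi\Z$ a.e.\ and the discontinuity set need not be a ray.

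The paper avoids all of this. It fixes two \emph{explicit smooth} partial phases $\theta,\widetilde\theta$ of $u$, $\theta$ smooth on $\R^2\setminus([0,\infty)\times\{0\})$ and $\widetilde\theta$ on $\R^2\setminus((-\infty,0]\times\{0\})$, viewed as functions on $\Omega\subset\R^n$ depending only on $(x_1,x_2)$. If $u=e^{\im\va}$ with $\va\in B^s_{p,q}(\Omega)$, then $(\va-\theta)/(2\pi)$ is $\Z$-valued and in $B^s_{p,q}$ of the slit domain, hence constant by Lemma \ref{Eunicite}; this upgrades $\theta$ (and symmetrically $\widetilde\theta$) to $B^s_{p,q}(\Omega)$. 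Then $(\theta-\widetilde\theta)/(2\pi)\in B^s_{p,q}(\Omega;\Z)$ is \emph{non-constant}, contradicting Lemma \ref{Eunicite} again. Crucially, Lemma \ref{Eunicite} holds in $\R^n$ for every $n$, so no slicing is needed and no distinction between $q\le p$ and $p<q$ arises.

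\paragraph{Part 1: the paper avoids any explicit construction.} Your plan with $\varphi_0\sim|x|^{-\alpha}$ could in principle be made to work, but it requires simultaneously tuning $(\alpha,s',p',q')$ to satisfy three constraints and you leave that balancing open. The paper sidesteps it entirely: take $s_1:=s/(1-\ve)$, $p_1:=(1-\ve)p$, $q_1:=(1-\ve)q$. Then $s_1p_1=sp\ge 1$, $s_1-n/p_1<s-n/p$ (so $B^{s_1}_{p_1,q_1}\not\hookrightarrow B^s_{p,q}$ by Lemma \ref{Besovemb}), and for any $\psi\in B^{s_1}_{p_1,q_1}\setminus B^s_{p,q}$ the map $u:=e^{\im\psi}$ lies in $B^{s_1}_{p_1,q_1}\cap L^\infty\hookrightarrow B^s_{p,q}$ by the superposition lemma (Lemma \ref{eipsi}) and Gagliardo--Nirenberg (Lemma \ref{gn}) with $\theta=1-\ve$. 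The rigidity argument then runs exactly as you describe. So the idea is the same, but the paper's abstract scaling removes the need for an explicit $\varphi_0$ and makes the three-way balancing automatic; if you pursue your explicit route you would still have to verify membership of $e^{\im\varphi_0}$ in $B^s_{p,q}$ and of $\varphi_0$ in the auxiliary space by hand.
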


The paper is organized as follows. In Section \ref{fun}, we briefly recall the standard definition of Besov spaces and some classical characterizations of these spaces (by Littlewood-Paley theory and wavelets). In Section \ref{pos} we establish Theorem \ref{positive}, namely the cases where $B^s_{p,q}$ does have the lifting property, while Section \ref{neg} is devoted to negative cases (Theorem \ref{negative}). In Section \ref{ope}, we discuss the remaining cases, which are widely open. The final section gathers statements and proofs of various results on Besov spaces needed in the proofs of Theorems \ref{positive} and \ref{negative}.

\subsubsection*{Acknowledgments}
 P. Mironescu  thanks N. Badr, G. Bourdaud, P. Bousquet, A.C.  Ponce and W. Sickel for useful discussions. He warmly thanks J. Kristensen for calling his attention to the reference \cite{uspenskii}. 
All the authors are  supported by the ANR project \enquote{Harmonic Analysis at its Boundaries},   ANR-12-BS01-0013-03. P. Mironescu was also supported by  the LABEX MILYON (ANR- 10-LABX-0070) of
Universit\'e de Lyon, within the program \enquote{Investissements d'Avenir}
(ANR-11-IDEX-0007) operated by the French National Research Agency
(ANR).

\section*{Notation, framework}
\ben
\item
Most of our results are stated in a smooth bounded domain $\Omega\subset\R^n$.
\item
In few cases, proofs are simpler if we consider $\Z^n$-periodic maps $u: (0,1)^n\to\so$. In this case, we denote the corresponding function spaces $B^s_{p,q}(\T^n ; \so)$, and the question is whether a map $u\in B^s_{p,q}(\T^n ; \so)$ has a  lifting $\va\in B^s_{p,q}((0,1)^n ; \R)$. [Of course, $\va$ need not be, in general, $\Z^n$-periodic.]  If such a $\va$ exists for every $u\in B^s_{p,q}(\T^n ; \so)$, then $B^s_{p,q}(\T^n ; \so)$ has the lifting property.

However, in these results it is not crucial to work in $\T^n$.  An inspection of the proofs shows that, with some extra work, 
 we could take any smooth bounded domain.
 \item
 In the same vein, it is sometimes easier to work in $\Omega =(0,1)^n$ (with no periodicity assumption). 
 \item
Partial derivatives are denoted $\p_j$, $\p_j\p_k$, and so on, or $\p^\alpha$.
\item
$\wedge$ denotes vector product of complex numbers: $a\wedge b:=a_1b_2-a_2b_1$. Similarly, $u\wedge \na v:=u_1\na v_2-u_2\na v_1$. 
\item
If $u:\Omega\to\C$ and if $\varpi$ is a $k$-form on $\Omega$ (with $k\in\llbracket 0, n-1\rrbracket$), then $\varpi\wedge (u\wedge\na u) $ denotes the $(k+1)$-form $\varpi\wedge(u_1d u_2-u_2du_1)$.
\item
We let $\R^n_+$ denote the open set $\R^{n-1}\times (0,\infty)$.
\een

%
\tableofcontents
\section{Crash course on Besov spaces}
\l{fun}
${}$

 We briefly recall here the basic properties of the  Besov spaces in $\R^n$, with special focus on the properties which will be instrumental for our purposes. For a complete treatment of these spaces, see \cite{triebel2,fjw,triebel3,runstsickel}. \par
\subsection{Preliminaries}
${}$

 In the sequel, ${\mathcal S}(\R^n)$ is the usual Schwartz space of rapidly decreasing $C^{\infty}$ functions. Let ${\mathcal Z}(\R^n)$ denote the subspace of
${\mathcal S}(\R^n)$ consisting of functions $\varphi\in {\mathcal S}(\R^n)$ such that $\partial^{\alpha}\varphi(0) = 0$ for every multi-index $\alpha\in \N^n$. Let ${\mathcal Z}^{\prime}(\R^n)$ stand for the topological dual of ${\mathcal Z}(\R^n)$. It is well-known \cite[Section 5.1.2]{triebel2} that ${\mathcal Z}^{\prime}(\R^n)$ can be identified with the quotient space ${\mathcal S}^{\prime}(\R^n)/{\mathcal P}(\R^n)$, where ${\mathcal P}(\R^n)$ denotes the space of all polynomials in $\R^n$. 

We denote by  ${\mathcal F}$ the Fourier transform.

 For all sequence $(f_j)_{j\geq 0}$ of measurable functions on $\R^n$, we set
\bes
\left\Vert (f_j)\right\Vert_{l^q(L^p)}:=\left(\sum_{j\geq 0} \left(\int_{\R^n} \left\vert f_j(x)\right\vert^pdx\right)^{q/p}\right)^{1/q},
\ees
with the usual modification when $p=\infty$ and/or $q=\infty$. If $(f_j)$ is labelled by $\Z$, then $\left\Vert (f_j)\right\Vert_{l^q(L^p)}$ is defined analogously with $\sum_{j\geq 0}$ replaced by $\sum_{j\in \Z}$.

 Finally, we fix some notation for finite order differences. Let $\Omega\subset \R^n$ be a domain and let $f:\Omega\to\R$. 
 For all integers $M\geq 0$, all $t>0$ and all $x, h\in \R^n$, set
\be
\l{ia1}
\Delta^M_hf(x)=\begin{cases}\d\sum_{l=0}^M {M \choose l} (-1)^{M-l} f(x+lh),&\text{if } x,\, x+h,\ldots,\,  x+Mh\in \Omega\\
0,&\text{otherwise}
\end{cases}.
\ee

\subsection{Definitions of Besov spaces}
\l{mm7}
${}$

 We first focus on inhomogeneous Besov spaces. Fix a sequence of functions $(\varphi_j)_{j\geq 0}\in {\mathcal S}(\R^n)$ such that:
\begin{itemize}
\item[$1.$] $\d \mbox{supp }\varphi_0\subset B(0,2)$ and $\d \mbox{supp }\varphi_j\subset B(0,2^{j+1})\setminus B(0,2^{j-1})  \mbox{ for all }j\geq 1$.
\item[$2.$]
For all multi-index $\alpha\in \N^n$, there exists $c_{\alpha}>0$ such that $\d
 \left\vert D^{\alpha}\varphi_j(x)\right\vert\leq c_{\alpha}2^{-j\left\vert \alpha\right\vert}$, for all $x\in \R^n$ and all $j\geq 0$.
\item[$3.$]
For all $x\in \R^n$, it holds $
\sum_{j\geq 0} \varphi_j(x)=1$.
\end{itemize}
\begin{definition} [Definition of inhomogeneous Besov spaces]
Let $s\in \R$, $1\leq p<\infty$ and $1\leq q\leq \infty$. Define $B^s_{p,q}(\R^n)$ as the space of tempered distributions $f\in {\mathcal S}^{\prime}(\R^n)$ such that
\bes
\left\Vert f\right\Vert_{B^s_{p,q}(\R^n)}:=\left\Vert \left(2^{sj} {\mathcal F}^{-1}\left(\varphi_j{\mathcal F}f(\cdot)\right)\right)\right\Vert_{l^q(L^p)}<\infty.
\ees
\end{definition}
Recall \cite[Section 2.3.2, Proposition 1, p. 46]{triebel2} that $B^s_{p,q}(\R^n)$ is a Banach space which does not depend on the choice of the sequence $(\varphi_j)_{j\geq 0}$, in the sense that two different choices for the sequence $(\varphi_j)_{j\geq 0}$ give rise to equivalent norms. Once the $\va_j$'s are fixed, we refer to the equality $f=\sum_j f_j$ in ${\mathcal S}'$ as the Littlewood-Paley decomposition of $f$.

 Let us now turn to the definition of homogeneous Besov spaces. Let $(\varphi_j)_{j\in \Z}$ be a sequence of functions satisfying:
\begin{itemize}
\item[$1.$]
$\d 
\mbox{supp }\varphi_j\subset B(0,2^{j+1})\setminus B(0,2^{j-1})  \mbox{ for all }j\in \Z$.
\item[$2.$]
For all multi-index $\alpha\in \N^n$, there exists $c_{\alpha}>0$ such that$\d 
\left\vert D^{\alpha}\varphi_j(x)\right\vert\leq c_{\alpha}2^{-j\left\vert \alpha\right\vert} $,  for all $x\in \R^n$ and all $j\in \Z$.
\item[$3.$]
For all  $x\in \R^n\setminus \left\{0\right\}$, it holds 
$
\sum_{j\in \Z} \varphi_j(x)=1$.
\end{itemize}
\begin{definition} [Definition of homogeneous Besov spaces]
Let $s\in \R$, $1\leq p<\infty$ and $1\leq q\leq \infty$. Define $\dot{B}^s_{p,q}(\R^n)$ as the space of $f\in {\mathcal Z}^{\prime}(\R^n)$ such that
\bes
\left\vert f\right\vert_{B^s_{p,q}(\R^n)}:=\left\Vert \left(2^{sj} {\mathcal F}^{-1}\left(\varphi_j{\mathcal F}f(\cdot)\right)\right)\right\Vert_{l^q(L^p)}<\infty.
\ees
\end{definition}
Note that this definition makes sense since, for all polynomial $P$ and all $f\in {\mathcal S}^{\prime}(\R^n)$, we have $\d
\left\vert f\right\vert_{B^s_{p,q}(\R^n)}=\left\vert f+P\right\vert_{B^s_{p,q}(\R^n)}$.

Again, $\dot{B}^s_{p,q}(\R^n)$ is a Banach space which does not depend on the choice of the sequence $(\varphi_j)_{j\in \Z}$ \cite[Section 5.1.5, Theorem,  p. 240]{triebel2}.\par
\noindent For all $s>0$ and all  $1\le p<\infty$,  $1\leq q\leq \infty$, we have  \cite[Section 2.3.3, Theorem]{triebel3}, \cite[Section 2.6.2, Proposition 3]{runstsickel}
\begin{equation} \label{homoglp}
B^s_{p,q}(\R^n)=L^p(\R^n)\cap \dot{B}^s_{p,q}(\R^n)\mbox{ and }\left\Vert f\right\Vert_{B^s_{p,q}(\R^n)}\sim \left\Vert f\right\Vert_{L^p(\R^n)}+\left\vert f\right\vert_{B^s_{p,q}(\R^n)}.
\end{equation}


 Besov spaces on domains of $\R^n$ are defined as follows.
\begin{definition} [Besov spaces on domains]
Let $\Omega\subset \R^n$ be an open set. Then
\begin{itemize}
\item[$1.$]
$\d 
B^s_{p,q}(\Omega):=\left\{f\in {\mathcal D}^{\prime}(\Omega);\mbox{ there exists }g\in B^s_{p,q}(\R^n)\mbox{ such that } f=g\vert_{\Omega}\right\}$,\\
equipped with the norm
\bes
\left\Vert f\right\Vert_{B^s_{p,q}(\Omega)}:=\inf \left\{\left\Vert g\right\Vert_{B^s_{p,q}(\R^n)};\, g\vert_{\Omega}=f\right\}.
\ees
\item[$2.$]
$\d 
\dot{B}^s_{p,q}(\Omega):=\left\{f\in {\mathcal D}^{\prime}(\Omega);\mbox{ there exists }g\in \dot{B}^s_{p,q}(\R^n)\mbox{ such that } f=g\vert_{\Omega}\right\}$,\\
equipped with the semi-norm
\bes
\left\Vert f\right\Vert_{\dot{B}^s_{p,q}(\Omega)}:=\inf \left\{\left\Vert g\right\Vert_{\dot{B}^s_{p,q}(\R^n)};\, g\vert_{\Omega}=f\right\}.
\ees
\end{itemize}
\end{definition}
Local Besov spaces are defined in the usual way: $f\in B^s_{p,q}$ near a point $x$ if for some cutoff $\va$ which equals $1$ near $x$ we have $\va f\in B^s_{p,q}$.
If $f$ belongs to $B^s_{p,q}$ near each point, then we write $f\in (B^s_{p,q})_{loc}$.

The following is straightforward. 
\bl
\l{ka3}
Let $f:\Omega\to\R$. If, for each $x\in\overline\Omega$, $f\in B^s_{p,q}(B(x,r)\cap \Omega)$ for some $r=r(x)>0$, then $f\in B^s_{p,q}$.
\el


 \subsection{Besov spaces on $\T^n$}
 \l{mm6}
 ${}$
 
Let $\varphi_0\in {\mathcal D}(\R^n)$ be such that
\bes
\varphi_0(x)=1\mbox{ for all } \left\vert x\right\vert<1\mbox{ and }\varphi_0(x)=0\mbox{ for all }\left\vert x\right\vert\geq \frac 32.
\ees
For all $k\geq 1$ and all $x\in \R^n$, define
\bes
\varphi_k(x):=\varphi_0(2^{-k}x)-\varphi_0(2^{-k+1}x).
\ees 
\begin{definition} \label{periodicbesov}
Let $s\in \R$, $1\leq p<\infty$ and $1\le q\leq \infty$. Define $B^s_{p,q}(\T^n)$ as the space of distributions 
$f\in {\mathcal D}^{\prime}(\T^n)$ whose Fourier coefficients $(a_m)_{m\in\Z^n}$ satisfy
\bes
\left\Vert f\right\Vert_{B^s_{p,q}(\T^n)}:=\left(\sum_{j=0}^{\infty} 2^{jsq} \left\Vert x\mapsto \sum_{m\in \Z^n} a_m\varphi_j(2\pi m)e^{2\im\pi m\cdot x}\right\Vert_{L^p(\T^n)}^q\right)^{1/q}<\infty
\ees
(with the usual modification when $q=\infty$). Again, the choice of the system $(\va_j)_{j\ge 0}$ is irrelevant, and the equality $f=\sum f_j$, with $f_j:=\sum_{m} a_m\varphi_j(2\pi m)e^{2\im\pi m\cdot x}$, is the Littlewood-Paley decomposition of $f$.
\end{definition}
Alternatively, we have  $f\in B^s_{p,q}(\T^n)$ if and only if $f$ can be identified with a $\Z^n$-periodic distribution in $\R^n$, still denoted $f$, which belongs to $(B^s_{p,q})_{loc}(\R^n)$ \cite[Section 3.5.4, pp. 167-169]{schmeisser}.

\subsection{Characterization by differences}
\l{mm5}
${}$

Among the various characterizations of Besov spaces, we recall here the ones involving differences \cite[Section 5.2.3]{triebel2}, \cite[Theorem, p. 41]{runstsickel}, \cite[Section 1.11.9,  Theorem 1.118, p. 74]{triebel06}.
 \begin{proposition}
\l{p2.4} 

Let $s>0$, $1\leq p<\infty$ and $1\leq q\leq \infty$. Let $M>s$ be an integer. Then, with the usual modification when $q=\infty$:
\begin{itemize}
\item[$1.$] In the space $\dot B^s_{p,q}(\R^n)$ we have the equivalence of semi-norms
\begin{equation} \label{equivnormhomogrn}
\begin{aligned}
\left\vert f\right\vert_{B^s_{p,q}(\R^n)}\sim &\left(\int_{\R^n} \left\vert h\right\vert^{-sq}
\left\|\Delta_h^Mf\right\|_{L^p (\R^n)}^q\, 
\frac{dh}{\left\vert h\right\vert^n}\right)^{1/q}\\
\sim & \sum_{j=1}^n\left(\int_{\R} \left\vert h\right\vert^{-sq}
\left\|\Delta_{h e_j}^Mf\right\|_{L^p (\R^n)}^q\, 
\frac{dh}{\left\vert h\right\vert}\right)^{1/q}.
\end{aligned}
\end{equation}
\item[$2.$]
The full $B^s_{p,q}$ norm satisfies, for all $\delta>0$,
\bes
\left\Vert f\right\Vert_{B^s_{p,q}(\R^n)}\sim \left\Vert f\right\Vert_{L^p(\R^n)}+\left(\int_{\left\vert h\right\vert\le \delta} \left\vert h\right\vert^{-sq}
\left\|\Delta_h^Mf\right\|_{L^p (\R^n)}^q\, 
\frac{dh}{\left\vert h\right\vert^n}\right)^{1/q}. 
\ees
\end{itemize}
\end{proposition}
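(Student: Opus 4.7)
The plan is to work via the Littlewood--Paley decomposition $f=\sum_{j} f_j$ and exploit the standard dyadic estimate that whenever $f_j$ has Fourier spectrum in an annulus $\{|\xi|\sim 2^j\}$,
\[
\|\Delta^M_h f_j\|_{L^p(\R^n)} \;\lesssim\; \min\!\bigl((2^j|h|)^M,\,1\bigr)\,\|f_j\|_{L^p(\R^n)}.
\]
Indeed, $\widehat{\Delta^M_h f_j}(\xi)=(e^{i h\cdot\xi}-1)^M\widehat{f_j}(\xi)$: on $\operatorname{supp}\varphi_j$ one realizes $(e^{i h\cdot\xi}-1)^M$ as a Fourier multiplier whose convolution kernel has $L^1$--norm of order $(2^j|h|)^M$ when $2^j|h|\le 1$ (Taylor expansion of order $M$), while the uniform bound by $1$ is a triangle inequality. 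The hypothesis $M>s$ will be used exactly to make this small-$h$ gain summable.

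For the first equivalence in item 1, the direction \enquote{$\lesssim$} follows by inserting the Littlewood--Paley decomposition into $\Delta^M_h f=\sum_j \Delta^M_h f_j$, splitting $\int_{\R^n}(\cdots)\,dh/|h|^n$ dyadically over $|h|\sim 2^{-k}$, and reducing to a discrete Hardy-type convolution inequality in the variable $k-j$; convergence at small $|h|$ uses $M>s$, convergence at large $|h|$ uses $s>0$. The reverse \enquote{$\gtrsim$} direction is harder: I plan to reconstruct $f_j$ from differences by choosing an auxiliary $\psi\in\srond$ whose Fourier transform vanishes to order $M$ at the origin and equals one on the annulus $\{|\xi|\sim 1\}$, so that, after rescaling and a Peetre-type maximal estimate in $x$, one recovers $\|f_j\|_{L^p}$ pointwise from $\|\Delta^M_{\cdot}f\|_{L^p}$; this is the classical Strichartz/Triebel reconstruction. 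For the axis-parallel version, the upper bound by the full isotropic integral is an elementary restriction to lines, and the lower bound by the Besov semi-norm rests on the observation that on $\operatorname{supp}\varphi_j$ at least one coordinate $\xi_\ell$ satisfies $|\xi_\ell|\gtrsim 2^j$, so one can insert a smooth cone decomposition of unity $\varphi_j=\sum_{\ell=1}^n \varphi_{j,\ell}$ with $\operatorname{supp}\varphi_{j,\ell}\subset\{|\xi_\ell|\gtrsim|\xi|\}$ and run the reconstruction with the univariate symbol $(e^{i h\xi_\ell}-1)^M$, which is non-degenerate on that cone.

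Item 2 follows at once from item 1 combined with \eqref{homoglp}: only the truncation of the $h$-integral to $\{|h|\le \delta\}$ needs a comment, but the tail $\{|h|>\delta\}$ contributes at most $\|\Delta^M_h f\|_{L^p}\le 2^M\|f\|_{L^p}$, which is $q$-integrable against $|h|^{-sq-n}dh$ on this set since $s>0$; thus truncating the integral only costs a multiple of $\|f\|_{L^p}$, which is already included on the right-hand side. The main obstacle I anticipate is the reconstruction step in the axial version, because the natural one-dimensional symbol $(e^{i h\xi_\ell}-1)^M$ degenerates on the hyperplane $\{\xi_\ell=0\}$; this degeneracy has to be absorbed by the cone partition $\{\varphi_{j,\ell}\}_{\ell=1}^n$, and the final estimate recovered by summing the $n$ directional contributions. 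The rest of the argument is a careful bookkeeping exercise based on Fubini, Minkowski's inequality in the $\ell^q(L^p)$ norm, and the basic dyadic estimate above.
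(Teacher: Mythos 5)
The paper does not actually prove Proposition~\ref{p2.4}: it is stated as a known result with the citations \cite[Section 5.2.3]{triebel2}, \cite[Theorem, p.~41]{runstsickel}, \cite[Theorem 1.118]{triebel06} immediately preceding the statement. Your sketch is therefore supplying an argument where the authors supply a reference, and it is essentially the standard Littlewood--Paley/reconstruction argument found in those sources: the basic dyadic estimate $\|\Delta_h^M f_j\|_{L^p}\lesssim\min((2^j|h|)^M,1)\|f_j\|_{L^p}$, a discrete Young/Hardy inequality in $k-j$ for the easy inclusion, a Fourier-side inversion of the difference symbol for the hard inclusion, and \eqref{homoglp} plus the trivial estimate $\|\Delta_h^M f\|_{L^p}\le 2^M\|f\|_{L^p}$ on the tail $\{|h|>\delta\}$ for item~2. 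That is correct as far as it goes, and the role of $M>s$ and $s>0$ is identified accurately.

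One point is stated imprecisely and would not survive as written. You claim that ``the upper bound [of the axial quantity] by the full isotropic integral is an elementary restriction to lines.'' It is not: the isotropic integral is $n$-dimensional in~$h$ while the axial one is one-dimensional, and there is no Fubini-type reduction giving the axial quantity directly from the isotropic one. Fortunately you do not need this route: the bound
\[
\sum_{\ell=1}^n\left(\int_{\R}|h|^{-sq}\|\Delta^M_{he_\ell}f\|_{L^p}^q\,\frac{dh}{|h|}\right)^{1/q}\lesssim |f|_{B^s_{p,q}}
\]
follows by exactly the same Littlewood--Paley computation you already perform in the isotropic case, since the basic dyadic estimate applies verbatim to $h=te_\ell$. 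With this substitution, the sketch is sound; your cone decomposition $\varphi_j=\sum_\ell\varphi_{j,\ell}$ with $\operatorname{supp}\varphi_{j,\ell}\subset\{|\xi_\ell|\gtrsim|\xi|\}$ is indeed the right device for the reverse inequality, because it keeps the one-variable symbol $(e^{ih\xi_\ell}-1)^M$ away from its zero set $\{\xi_\ell=0\}$ uniformly in $j$ after rescaling.
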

\subsection{Characterization by harmonic extensions}
\l{chha}
${}$

In Section \ref{pos}, it will be convenient to work with extensions of maps in $B^{s}_{p,q}$. The connection between regularity of maps and the properties of their suitable extensions is a classical topic in the theory of function spaces. Here is a typical result in this direction. 
It characterizes $B^{s}_{p,q}$ by means of the harmonic extension \cite{triebelheat}, \cite[Section 2.12.2, Theorem, p. 184]{triebel2}. More specifically, if $f$ is measurable in $\R^n$  and $s\in (0,1)$,  then we have
\begin{equation} \label{besovnorm}
\left\Vert f\right\Vert_{B^s_{p,q}(\R^n)}\sim \left\Vert f\right\Vert_{L^p(\R^n)}+ \left(\int_0^{\infty} t^{(1-s)q}\left\Vert \frac{\partial P_tf}{\partial t}(\cdot)\right\Vert_{L^p(\R^n)}^q \frac{dt}t\right)^{1/q},
\end{equation}
where $P_t$ stands for the Poisson semigroup generated by $-\Delta$, so that $(x,t)\mapsto P_tf(x)$, $t>0$, $x\in\R^n$,  is the harmonic extension of $f$ to the upper-half space. 
Since when $p>1$ we have
\bes\left\Vert \frac{\partial P_tf}{\partial t}\right\Vert_{L^p(\R^n)} =\left\Vert (-\Delta_x)^{1/2}P_tf\right\Vert_{L^p(\R^n)}\sim \left\Vert \nabla_x P_tf\right\Vert_{L^p(\R^n)},
\ees
one also has, for $1<p<\infty$ and $1\le q\le \infty$, 
\begin{equation} \label{besovnormbis}
\left\Vert f\right\Vert_{B^s_{p,q}(\R^n)}\sim \left\Vert f\right\Vert_{L^p(\R^n)}+ \left(\int_0^{\infty} t^{(1-s)q}\left\Vert \nabla P_tf(\cdot)\right\Vert_{L^p(\R^n)}^q \frac{dt}t\right)^{1/q}
\end{equation}
(with the usual modification when $q=\infty$).

The results in the literature are not suited to our context. We will need  some variants of \eqref{besovnormbis}, which will be stated and proved in Section 
\ref{characext} below.

\subsection{Lizorkin type characterizations}
\l{mm1}
${}$

Such characterizations involve restrictions of the Fourier transform on cubes or corridors; see e.g. \cite[Section 2.5.4, pp. 85-86]{triebel2} or \cite[Section 3.5.3, pp. 166-167]{schmeisser}. The following special case \cite[Section 3.5.3, Theorem, p. 167]{schmeisser} will be useful later. 
\bpr
\l{mm2}
Let $s\in\R$, $1<p<\infty$ and $1\le q\le\infty$. Set $K_0:=\{0\}\subset\Z^n$ and, for $j\ge 1$, let $K_j:=\{ m\in\Z^n;\, 2^{j-1}\le |m|<2^j\}$.\footnote{ Here, $|m|:=\max_{l=1}^n|m_l|$.} Let $f\in{\cal D}'(\T^n)$ have the Fourier series expansion  $f=\sum_{m\in\Z^n} a_me^{2\im\pi m\cdot x}$. We set   $f_j:=\sum_{m\in K_j} a_me^{2\im\pi m\cdot x}$. Then we have the norm equivalence
\bes
\|f\|_{B^s_{p,q}(\T^n)}\sim \left(\sum_{j=0}^{\infty} 2^{jsq} \left\Vert f_j\right\Vert_{L^p(\T^n)}^q\right)^{1/q}
\ees
(with the usual modification when $q=\infty$). 
\epr

\subsection{Characterization by the Haar system}
\l{at7}
${}$

Besov spaces can also be described via the size of their wavelet coefficients. To illustrate this, we start with  low smoothness Besov spaces, which can be described using the Haar basis. (The next section is devoted to smoother spaces and bases.) For the results of this section, see e.g. \cite[Corollary 5.3]{devorepopov}, \cite[Section 7]{bourdaud}, \cite[Theorem 1.58]{triebel06},  \cite[Theorem 2.21]{triebel10}. \par
\noindent Let
\be
\l{qa8}
\psi_M(x):=
\begin{cases}
1, & \text{if }0\leq x<1/2\\
-1, & \text{if }1/2\leq x\leq 1\\
0, & \text{if }x\notin [0,1]
\end{cases}, \text{ and }\psi_F(x):=\left\vert \psi_M(x)\right\vert.
\ee

When  $j\in\N$, we let 
\be
\l{qa1}
G^j:=\begin{cases}
\left\{F,M\right\}^{n},&\text{if }j=0\\
\left\{F,M\right\}^{n}\setminus\{ (F, F, \ldots, F)\},&\text{if }j>0
\end{cases}.
\ee
For all $m\in \Z^n$, all $x\in \R^n$ and all $G\in \left\{F,M\right\}^{n}$, define
\be
\l{qa2}
\Psi_m^G(x):=\prod_{r=1}^n \psi_{G_r}(x_r-m_r).
\ee
%
Finally, for all $m\in \Z^n$, all $j\in \N$, all $G\in 
G^j$
and all $x\in \R^n$, let
\be
\l{qa3}
\Psi_m^{j,G}(x):=
\begin{cases}
\displaystyle \Psi_m^G(x), & \text{if }j=0\\
\displaystyle 2^{nj/2}\Psi^G_m(2^jx), & \text{if }j\geq 1
\end{cases}.
\ee
Recall  that the family $(\Psi_m^{j,G})$, called the Haar system, is an orthonormal basis of $L^2(\R^n)$ \cite[Proposition 1.53]{triebel06}. Moreover, we have the following result \cite[Theorem 2.21]{triebel10}.
\bpr
\l{at11}
Let $s>0$, $1\leq p<\infty$,  and $1\le q\le \infty$ be such that $sp<1$. Let $f\in {\mathcal S}^{\prime}(\R^n$). Then $f\in B^s_{p,q}(\R^n)$ if and only if there exists a sequence $\left(\mu^{j, G}_{m}\right)_{j\geq 0,\ G\in G^j,\ m\in \Z^n}$ such that
\be
\l{qa4}
\sum_{j=0}^{\infty}\  \sum_{G\in G^j} \left(\sum_{m\in \Z^n} \left\vert \mu^{j, G}_{m}\right\vert^p\right)^{q/p}<\infty
\ee
(obvious modification when $q=\infty$) and
\be \label{decompof}
f=\sum_{j=0}^{\infty}\  \sum_{G\in G^j} \sum_{m\in \Z^n} \mu^{j, G}_{m} 2^{-j\left(s-n/p\right)}  2^{-nj/2}  \Psi^{j,G}_{m}.
\ee

Here, the series in \eqref{decompof} converges unconditionally in $B^s_{p,q}(\R^n)$ when $q<\infty$. Moreover,
\be
\l{qa5}
\left\Vert f\right\Vert_{B^s_{p,q}(\R^n)}\sim \left(\sum_{j=0}^{\infty}\ \sum_{G\in G^j} \left(\sum_{m\in \Z^n} \left\vert \mu^{j, G}_{m}\right\vert^p\right)^{q/p}\right)^{1/q}
\ee
(obvious modification when $q=\infty$).
\epr
Equivalently, Proposition \ref{at11} can be reformulated as follows. Consider the partition of $\R^n$ into standard dyadic cubes   $Q$  of side $2^{-j}$. \footnote{ Thus the  $Q$'s  are of the form  $Q=2^{-j}\prod_{k=1}^n[m_k,m_k+1)$, with $m_k\in\Z$.} For all  $x\in\R^n$, denote by $Q_j(x)$  the unique dyadic cube of side $2^{-j}$ containing $x$. If $f\in L^1_{loc}(\R^n)$, define $E_j(f)(x):=\fint_{Q_j(x)}f$  for all $j\geq 0$. We also set $E_{-1}(f):=0$.
We have the following results (see \cite[Theorem 5 with $m=0$]{bourdaud} in $\R^n$; see also \cite[Appendix A]{lss} in the framework of Sobolev spaces  on $\T^n$).
\bpr\label{caracBesov} Let $s>0$, $1\leq p<\infty$,  and $1\le q\le \infty$ be such that $sp<1$. Let $f\in L^1_{loc}(\R^n$). 
Then 
\bes\label{caracBesovBourdaud}
\left\Vert f\right\Vert^q_{B^s_{p,q}(\R^n)}\sim \sum_{j \ge 0} 2^{sjq}\|E_j(f)-E_{j-1}(f)\|_{L^p}^q
\ees
(obvious modification when $q=\infty$).
\epr

Similar results hold when $\R^n$ is replaced by $(0,1)^n$ or $\T^n$;  it suffices to consider only dyadic cubes contained in $[0,1)^n$.

\bc
\l{mq2}
Let $s>0$, $1\leq p<\infty$,  and $1\le q\le \infty$ be such that $sp<1$. Let $f\in L^1_{loc}(\R^n$). 
Then 
\bes\label{mq3}
\left\Vert f\right\Vert^q_{B^s_{p,q}(\R^n)}\sim \sum_{j \ge 0} 2^{sjq}\|f-E_{j}(f)\|_{L^p}^q
\ees
(obvious modification when $q=\infty$).

Similar results hold when $\R^n$ is replaced by $(0,1)^n$ or $\T^n$.
\ec
\bc 
\l{mp1}
Let $s>0$, $1\leq p<\infty$,  and $1\le q\le \infty$ be such that $sp<1$. Let $(\varphi_j)_{j\ge 0}$ be a sequence of functions on $(0,1)^n$ such that: for any $j$,   $\varphi_j$ is constant on each dyadic cube  $Q$ of size $2^{-j}$. 
Assume that  
$
\sum_{j \ge 1} 2^{sjq}\|\varphi_j-\varphi_{j-1}\|_{L^p}^q < \infty$. 
Then $(\varphi_j)$ converges in $L^p$ to some    $\varphi \in B^s_{p,q}$, and we have
\bes
\left\Vert \varphi \right\Vert_{B^s_{p,q}((0,1)^n)} \lesssim  \left(\sum_{j \ge 0} 2^{sjq}\|\varphi_j-\varphi_{j-1}\|_{L^p}^q\right)^{1/q}
\ees
(with the convention $\va_{-1}:=0$ and with the usual modification when $q=\infty$).
\ec
In the framework of Sobolev spaces, Corollaries \ref{mq2} and \ref{mp1} are easy consequences of Proposition \ref{caracBesov}; see \cite[Appendix A, Theorem A.1]{lss} and \cite[Appendix A, Corollary A.1]{lss}. The arguments in \cite{lss} apply with no changes to Besov spaces. Details are left to the reader.

\subsection{Characterization via smooth wavelets}
\l{qa6}

 Proposition \ref{at11} has a counterpart when $sp\ge 1$; this requires smoother \enquote{mother wavelet} $\psi_M$ and \enquote{father wavelet} $\psi_F$.  Given $\psi_F$ and $\psi_M$ two real functions, define $\psi_m^{j, G}$ as in \eqref{qa1}--\eqref{qa3}. Then \cite[Chapter 6]{meyer92}, \cite[Section 1.7.3]{triebel06} for every integer $k>0$ we may find some $\psi_F\in C^k_c(\R)$ and $\psi_M\in C^k_c(\R)$ such that the following result holds.
\bpr
\l{qb1}
Let $s>0$, $1\leq p<\infty$,  and $1\le q\le \infty$ be such that $s<k$. Let $f\in {\mathcal S}^{\prime}(\R^n$). Then $f\in B^s_{p,q}(\R^n)$ if and only if there exists a sequence $\left(\mu^{j, G}_{m}\right)_{j\geq 0,\ G\in G^j,\ m\in \Z^n}$ such that
\be
\l{qb2}
\sum_{j=0}^{\infty}\  \sum_{G\in G^j} \left(\sum_{m\in \Z^n} \left\vert \mu^{j, G}_{m}\right\vert^p\right)^{q/p}<\infty
\ee
(obvious modification when $q=\infty$) and
\be \label{qb3}
f=\sum_{j=0}^{\infty}\  \sum_{G\in G^j} \sum_{m\in \Z^n} \mu^{j, G}_{m} 2^{-j\left(s-n/p\right)}  2^{-nj/2}  \Psi^{j,G}_{m}.
\ee

Here, 
the series in \eqref{decompof} converges unconditionally in $B^s_{p,q}(\R^n)$ when $q<\infty$. Moreover,
\be
\l{qb4}
\left\Vert f\right\Vert_{B^s_{p,q}(\R^n)}\sim \left(\sum_{j=0}^{\infty}\ \sum_{G\in G^j} \left(\sum_{m\in \Z^n} \left\vert \mu^{j, G}_{m}\right\vert^p\right)^{q/p}\right)^{1/q}
\ee
(obvious modification when $q=\infty$).
\epr

For further use, let us note that, if $f\in B^s_{p,q}(\R^n)$ for some $s>0$, $1\le p<\infty$ and $1\le q\le\infty$,  then we have
\be
\l{qb40}
\mu^{j, G}_{m}=\mu^{j, G}_{m}(f)=2^{j(s-n/p+n/2)}\, \int_{\R^n} f(x)\, \Psi^{j,G}_{m}(x)\, dx.
\ee

This immediately leads to the following consequence of Proposition \ref{qb1}, the proof of which is left to the reader. 
\bc
\l{qb400}
Let $s>0$, $1\le p<\infty$ and $1\le q\le\infty$ be such that $s<k$. Assume that $f\in L^p(\R^n)$ is such that the coefficients $\mu^{j, G}_{m}$ given by \eqref{qb40} satisfy 
\be
\l{qb50}
\sum_{j=0}^{\infty}\  \sum_{G\in G^j} \left(\sum_{m\in \Z^n} \left\vert \mu^{j, G}_{m}\right\vert^p\right)^{q/p}=\infty
\ee
(obvious modification when $q=\infty$). Then $f\not\in B^s_{p,q}(\R^n)$.
\ec

\subsection{Nikolski\u\i{} type decompositions}
\l{mm8}
${}$

In practice, we often do not know the Littlewood-Paley decomposition of some given $f$, but only a Nikolski\u\i{} representation (or decomposition) of $f$. More specifically, set $\mathcal{C}_j:=B(0,2^{j+2})$, with $j\in\N$. Let $f^j\in\mathcal{S}'$ satisfy
\be\l{e230501}
\supp{\mathcal F}f^j\subset\mathcal{C}_j,\ \fo j\in\N,\ \text{and } f=\sum_jf^j\text{ in }{\mathcal S}';
\ee
the decomposition $f=\sum_j f^j$ is a Nikolski\u\i{} decomposition of $f$. Note that the Littlewood-Paley decomposition is a special Nikolski\u\i{} decomposition.
%

We have the following result.
\bpr
\l{mm9}
Let $s>0$, $1\le p<\infty$, $1\le q\le\infty$. Assume that \eqref{e230501} holds. Then  we have
\be
\l{28024}
\Big\|\sum_{j}f^j\Big\|_{B^{s}_{p,q}}\lesssim\left(\sum_{j}2^{ sqj }\|f^j\|^q_{L^p}\right)^{1/q},
\ee
with the usual modification when $q=\infty$.
\epr
\noindent
The above was proved in \cite[Lemma 1]{gnp} (see also \cite{yamazaki}) in the framework of Triebel-Lizorkin spaces $F^s_{p,q}$; the proof applies with no change to Besov spaces and will be omitted here. For related results in the framework of Besov spaces, see \cite[Section 2.5.2, pp. 79-80]{triebel2} and \cite[Section 2.3.2, Theorem, p. 105]{schmeisser}.

\section{Positive cases}
\l{pos}
${}$

We start with the trivial case.
\begin{case}
\l{tri}
{\it Range.} $s>0$, $1\le p<\infty$, $1\le q\le\infty$, and $sp>n$.

\smallskip
\noindent
{\it Conclusion.}
$B^s_{p,q}(\Omega ; \so)$ does  have the lifting property.
\end{case}
\bp
Since $B^s_{p,q}(\Omega)\ho C^0(\overline\Omega)$ (Lemma \ref{ka1}), we may  write $u=e^{\im\va}$, with $\va$ continuous. Locally, we have $\va=-\im \ln u$, for some smooth determination $\ln$ of the complex logarithm. Then $\va$ belongs to $B^s_{p,q}$ locally in $\overline\Omega$ (Lemma \ref{ka2}), and thus globally (Lemma \ref{ka3}). 
\ep

\begin{case}
\l{A}
{\it Range.} $0<s<1$, $1\le p<\infty$, $1\le q\le\infty$, and $sp<1$.

\smallskip
\noindent
{\it Conclusion.}
$B^s_{p,q}(\Omega ; \so)$ does have the lifting property.
\end{case}
\bp The argument being essentially the one in  \cite[Section 1]{lss}, we will be sketchy. Assume for simplicity that $\Omega=(0,1)^n$. 
Let $u \in B^s_{p,q}(\Omega ; \so)$. For all $j\in\N$,   consider the function $U_j$ defined by 
\bes
U_j(x):=
\begin{cases}
\d E_j(u)(x)/|E_j(u)(x)|,&\mbox{if }E_j(u)(x) \neq 0\\
1,&\mbox{if }E_j(u)(x) = 0
\end{cases}.
\ees
Since $E_j(u)\to u$ a.e., we find that  $U_j \rightarrow u$ a.e. on $\Omega$. By induction on $j$, for all $j\in\N$ 
we construct a phase $\va_j$ of $U_j$, constant on each dyadic cube of size $2^{-j}$, and satisfying  the inequality
\be\l{mq1}
|\varphi_j -\varphi_{j-1}| \leq \pi |U_j-U_{j-1}| \quad\mbox{on }\Omega,\, \fo j\ge 1.\footnotemark
\ee
\footnotetext{ Thus $\va_j$ is the phase of $U_j$ closest to $\va_{j-1}$.}
As in \cite{lss}, \eqref{mq1} implies 
\bes
|\varphi_j -\varphi_{j-1}| \lesssim |u -E_j(u)|+ |u -E_{j-1}(u)|,
\ees
and thus, e.g. when $q<\infty$, we have
\bes
\sum_{j \ge 1} 2^{sjq}\|\varphi_j-\varphi_{j-1}\|_{L^p}^q \lesssim \sum_{j \ge 0} 2^{sjq}\|u-E_{j}(u)\|_{L^p}^q.
\ees
Applying Corollaries \ref{mq2} and \ref{mp1}, 
 we obtain that $\varphi_j \rightarrow \varphi $ in $L^p$ to some $\varphi \in B^s_{p,q}(\Omega ; \R)$. Since $\va_j$ is a phase of $U_j$ and $U_j\to u$ a.e., we find that  $\va$ is a phase of $u$. In addition,    we have the control 
$\|\varphi\|_{B^s_{p,q}} \lesssim \|u\|_{B^s_{p,q}}$.
\ep

\begin{case}
\l{X}
{\it Range.} $0<s<1$, $1\le p<\infty$, $1\le q<\infty$, and $sp=n$.

\smallskip
\noindent
{\it Conclusion.}
$B^s_{p,q}(\Omega ; \so)$ does  have the lifting property.
\end{case}
\begin{proof} Here, it will be convenient to work with $\Omega=\T^n$. 
Let $|\ |$ denote the sup norm in $\R^n$. Let $\rho\in C^\infty$ be a mollifier supported in $\{ |x|\le 1\}$ and set $F(x,\ve):=u\ast\rho_\ve(x)$, $x\in\T^n$, $\ve>0$. Since $sp=n$, we have  $u\in \VMO(\T^n)$, by Lemma \ref{B-VMO}. Let us recall that, if $u\in\VMO(\T^n ; \so)$ then, for  some $\delta>0$ (depending on $u$) we have \cite[Remark 3, p. 207]{brezisnirenberg1}
\begin{equation} \label{boundsv}
\frac 12<\left\vert F(x,\ve)\right\vert\le 1\mbox{ for all } x\in \T^n\mbox{ and all }\ve\in (0,\delta).\footnotemark
\end{equation}
\footnotetext{ For an explicit calculation leading to \eqref{boundsv}, see e.g. \cite[p. 415]{surveypetru}.}
Define
\bes
w(x,\ve):=\frac{F(x,\ve)}{\left\vert F(x,\ve)\right\vert}\mbox{ for all }x\in \T^n\mbox{ and all }\ve\in (0,\delta).
\ees
Pick up a function $\psi\in C^{\infty}(\T^n\times (0,\delta) ; \R)$ such that $w=e^{\im \psi}$.
We note that for all $j\in\llbracket 1, n\rrbracket$ we have
$
\nabla\psi=-\im  \overline{w}\nabla w
$, and  
$
\partial_j |F|= |F|^{-1}(F\partial_j\overline{F}+\overline{F}\partial_jF)/2
$. Therefore, 
\eqref{boundsv} yields
\begin{equation} \label{nablaw}
\left\vert \nabla\psi\right\vert=\left\vert \nabla w\right\vert\lesssim \left\vert \nabla F\right\vert.
\end{equation}
%
%
%
%
%
In view of \eqref{nablaw} and estimate \eqref{cg1} in Lemma \ref{ab1}, we find that
\be
\l{ka5}
 |u|_{B^{s,p}_q(\T^n)}^q  \gtrsim  \displaystyle \int_0^\delta\ve^{q-sq}\|(\na F)(\cdot,\ve)\|_{L^p}^q\, \frac{d\ve}{\ve} \gtrsim  \int_0^\delta\ve^{q-sq}\|(\na \psi)(\cdot,\ve)\|_{L^p}^q\, \frac{d\ve}{\ve}.
\ee
Combining \eqref{ka5} with  the conclusion of Lemma \ref{ab1}, we obtain that the phase $\psi$ has, on $\T^n$,  a trace $\va\in B^s_{p,q}$, in the sense that the limit $\va:=\lim_{\ve\to 0}\psi(\cdot,\ve)$ exists in $B^s_{p,q}$. In particular (using Lemma \ref{kc2}), we have that $\psi(\cdot, \ve_j)\to\va$ a.e. along some sequence $\ve_j\to 0$; this leads to $w(\cdot, \ve_j)=e^{\im\psi(\cdot,\ve_j)}\to e^{\im\va}$ a.e. Since, on the other hand, we have $\lim_{\ve\to 0}w(\cdot,\ve)=u$ a.e., we find that $\va$ is a $B^s_{p,q}$ phase of $u$.
%
\end{proof}\Bk

The next case is somewhat similar to Case \ref{X}, so that our argument is less detailed.
\begin{case}
\l{kc3}
{\it Range.} $s=1$, $p=n$, $1\le q<\infty$.

\smallskip
\noindent
{\it Conclusion.}
$B^1_{n,q}(\Omega ; \so)$ does  have the lifting property.
\end{case}
\bp
We consider  $\delta$, $w$ and $\psi$ as in Case \ref{X}. The analog of \eqref{nablaw} is the estimate 
\be
\l{kc4}
|\p_j\p_k\psi|+|\na\psi|^2\lesssim |\p_j\p_kF|+|\na F|^2,
\ee
which is a straightforward consequence of the identities
\bes
\na\psi=-\im \overline w\na w\text{ and }\p_j\p_k\psi=-\im \overline w\p_j\p_kw+\im w^2\p_j w\p_kw.
\ees
Combining \eqref{kc4} with the second  part of Lemma \ref{kb2}, we obtain  
\be
\l{kg1a}
|u|_{B^1_{n,q}}^q\gtrsim \int_0^\delta\ve^q\left(\sum_{j, k=1}^n\left\|\p_j\p_k\psi(\cdot,\ve)\right\|_{L^n}^q+\left\|\p_\ve\p_\ve\psi(\cdot,\ve)\right\|_{L^n}^q+\|\na\psi(\cdot,\ve)\|_{L^{2n}}^{2q}\right)\, \frac{d\ve}{\ve}.
\ee
By \eqref{kg1a} and the first part of Lemma \ref{kb2}, we find that  $\psi$ has a trace $\va:=\tr \psi \in B^1_{n,q}(\T^n)$. Clearly, $\va$ is a $B^1_{n,q}$  phase of $u$.
\ep

\begin{case}
\l{Y}
{\it Range.} $s>1$, $1\le p<\infty$, $1\le q<\infty$, $n=2$, and $sp=2$.

Or
$s>1$, $1\le p<\infty$, $1\le q\le p$, $n\ge 3$, and $sp= 2$. 

Or: $s>1$, $1\le p<\infty$, $1\le q\le \infty$, $n\ge 2$,
and $sp> 2$.

\smallskip
\noindent
{\it Conclusion.}
$B^s_{p,q}(\Omega ; \so)$ does  have the lifting property.
\end{case}
Note that, in the critical case where $sp=2$, our result is weaker in dimension $n\ge 3$ (when we ask $1\le q\le p$) than in dimension $2$ (when we merely ask $1\le q<\infty$).
\bp
The general strategy is the same as in \cite[Section 3, Proof of Theorem 3]{lss},\footnote{ See also \cite{carbou}.} but the key argument (validity of \eqref{at1} below) is much more involved in our case. 

It will be convenient to work in $\Omega=\T^n$. Let $u\in B^s_{p, q}(\T^n ; \so)$. Assume first that we do may write $u=e^{\im\va}$, with $\va\in B^s_{p, q}((0,1)^n ; \R)$. Then $u, \va\in W^{1,p}$ (Lemma \ref{kc2}). We are thus in position to apply chain's rule and infer that $\na u=\im u\na \va$, and therefore
\be
\l{at2}
\na\va=\frac 1{\im u}\na u=F,\ \text{with }F:=u\wedge\na u\in L^p(\T^n ; \R^n).
\ee
The assumptions on $s$, $p$, $q$ imply that $F\in B^{s-1}_{p, q}$ (Lemma \ref{at3}). We may now argue as follows. If $\va$ solves \eqref{at2}, then $\na\va\in B^{s-1}_{p, q}$, and thus $\va\in B^s_{p, q}$ (Lemma \ref{at4}). Next, since  $u, \va\in W^{1,p}\cap L^{\infty}$, we find that 
\bes
\na(u\, e^{-\im\va})=\na u\, e^{-\im\va}-\im u\, e^{-\im\va}\na\va=\im u\, e^{-\im\va}(u\wedge\na u-\na\va)=0.
\ees
Thus $u\, e^{-\im\va}$ is constant, and therefore  $\va$ is, up to an appropriate additive constant, a $B^s_{p, q}$ phase of $u$. 

There is a flaw in the above. Indeed, \eqref{at2} need not have a solution. In $\T^n$, the necessary and sufficient conditions for the solvability of \eqref{at2} are\footnote{ This is easily seen by an inspection of the Fourier coefficients.}
\be
\l{at5}
\int_{\T^n}F=\widehat F(0)=0
\ee
and
\be
\l{at1}
\curl F=0.
\ee
Clearly, \eqref{at5} holds.\footnote{ Expand $u\wedge\na u$ in Fourier series.} We complete Case \ref{Y} by noting that \eqref{at1} holds in the relevant range of $s$, $p$, $q$ and $n$ (Lemma \ref{at6}). 
\ep

\begin{remark}
\l{aa1} We briefly discuss the lifting problem when $s\le 0$. For such $s$, distributions in $B^s_{p,q}$ need not be integrable functions, and thus the meaning of the equality $u=e^{\im\va}$ is unclear. We therefore address the following reasonable version of the lifting problem: let $u:\Omega\to \so$ be a measurable function such that $u\in  B^s_{p, q}(\Omega)$.  Is there any $\va\in L^1_{loc}\cap B^s_{p,q}(\Omega ; \R)$ such that $u=e^{\im\va}$? 

Let us note that the answer is trivially positive when $s<0$, $1\le p<\infty$, $1\le q\le\infty$.

Indeed, let $\va$ be any bounded measurable lifting of $u$. Then $\va\in B^s_{p, q}$, since  $L^\infty\ho B^s_{p,q}$ when $s<0$ (see Lemma \ref{ia2}). 
\end{remark}

\section{Negative cases}
\l{neg}
 \begin{case}
\l{B}
{\it Range.} $0<s<1$, $1\le p< \infty$, $1\le q<\infty$, $n\ge 2$, and $1\leq sp <n$.

Or $0<s<1$, $1\le p< \infty$, $q=\infty$, $n\ge 2$, and $1< sp <n$.

\smallskip
\noindent
{\it Conclusion.} $B^s_{p,q}(\Omega ; \so)$ does not have the lifting property.
\end{case}

%
%
     
 \begin{proof}
We want to show  that there exists a function $u\in B^{s}_{p,q}$ such that $u\neq e^{{\im} \va}$  for any $\va \in B^{s}_{p,q}$.
 
 For sufficiently small $\ve>0$, set $
 s_1:=s/(1-\ve)$ and $p_1:=(1-\varepsilon)p$. By Lemma \ref{Besovemb}, we have $B^{s_1}_{p_1,q_1}\not\hookrightarrow B^s_{p,q}$ (for any $q_1$). We will use later this fact for $q_1:=(1-\varepsilon)q$.

Let $\psi\in B^{s_1}_{p_1,q_1}\setminus B^s_{p,q}$ and set $u:=e^{{\im} \psi}$. Then $u\in B^{s_1}_{p_1,q_1}\cap L^{\infty}$ (Lemma \ref{eipsi}) and thus $u\in B^{s}_{p,q}$
(Lemma \ref{gn}). 

 We claim that there is no $\varphi\in B^s_{p,q}$ such that $u=e^{{\im} \varphi}$. Argue by contradiction. Since $u=e^{\im\va}=e^{\im\psi}$, the function $(\varphi-\psi)/2\pi$ belongs to $(B^s_{p,q}+B^{s_1}_{p_1,q_1})(\Omega ; \Z)$. By Lemma \ref{Eunicite}, this implies that $\varphi-\psi$ is constant, and thus $\psi\in B^{s}_{p,q}$, which is a contradiction. 
\end{proof}
 \begin{case}
\l{xa2}
{\it Range.}
$0<s<\infty$, $1\le p<\infty$, $1\le q< \infty$, $n\ge 2$, and $1\le sp<2$. 

Or $0<s<\infty$, $1\le p<\infty$, $ q= \infty$, $n\ge 2$, and $1<sp\le 2$.

\smallskip
\noindent
{\it Conclusion.}
$B^s_{p,q}(\Omega ; \so)$ does not have the lifting property.
\end{case}
\bp
The proof is based on the example of a topological obstruction considering the case $n=2$. Consider the map $\d 
u(x)=\frac{x}{|x|}$, $\forall\,  x\in\R^2$.

 We first prove that $u \in B^s_{p,q}(\Omega)$ for any smooth bounded domain $\Omega\subset\R^2$.  We  distinguish  two cases: firstly, $q \le \infty$ and $sp <2$ and secondly, $q=\infty$ and $sp=2$. 

In the first case,  let $s_1>s$ such that $s_1$ is not an integer and $1<s_1p<2$, which implies $W^{s_1,p}=B^{s_1}_{p,p} \ho B^{s}_{p,q}$. Since
$u \in W^{s_1,p}$ \cite[Section 4]{lss}, we find that $u\in B^{s}_{p,q}$. 


 The second case is slightly more  involved.  By the Gagliardo-Nirenberg inequality (Lemma \ref{gn} below), it suffices to prove that $u \in B^2_{1,\infty}(\Omega)$. Using  Proposition \ref{p2.4}, a sufficient condition for this to hold is 
\be
\l{qf1}
\left\Vert \Delta^3_{h}u\right\Vert_{L^1(\R^2)}\lesssim \left\vert h\right\vert^2,\ \fo h\in \R^2.
\ee

Since $u$ is radially symmetric and $0$-homogeneous, this amounts to checking \Bk that
\be\l{delta3l1}
\|\Delta_{e_1}^3u\|_{L^1(\mathbb R^2)}<\infty. 
\ee

 However, by the mean-value theorem, for all $\left\vert x\right\vert\ge 1$ we have
\be\l{delta3infty}
|\Delta^3_{e_1} u(x)|\lesssim 1/|x|^3,
\ee
while $\Delta^3_{e_1}u$ is bounded in $B(0,1)$  since $u$ is $\so$-valued.  Using this fact and estimate \eqref{delta3infty}, we obtain \eqref{delta3l1}.

We next claim that $u$ has no $B^s_{p,q}$ lifting in $\Omega$ provided $\Omega\subset\R^2$ is a smooth bounded domain containing the origin. Argue by contradiction, and 
 assume  that $u=e^{\im\va}$ for some $\va\in B^s_{p,q}(\Omega)$. Let, as in \cite[p. 50]{lss},  $
 \theta\in C^{\infty}(\R^2\setminus ([0,\infty)\times \left\{0\right\}))$ be such that $e^{\im\theta}=u$.
 
  Note that $\theta\in B^s_{p,q}(\omega)$ for every smooth bounded open set $\omega$ such that $\overline\omega\subset\R^2\setminus ([0,\infty)\times \left\{0\right\}))$. Since $(\varphi-\theta)/(2\pi)$ is $\Z$-valued, Lemma \ref{Eunicite} yields that $\va-\theta$ is constant a.e. in $\Omega\setminus ([0,\infty)\times \left\{0\right\})$. Thus, $\theta\in B^s_{p,q}(\Omega)$. Similarly, $\widetilde\theta\in B^s_{p,q}(\Omega)$, where $\widetilde\theta
\in C^{\infty}(\R^2\setminus ((-\infty,0]\times \left\{0\right\}))$ is such that $e^{\im\widetilde\theta}=u$. We find that $(\theta-\widetilde\theta)/(2\pi)\in B^s_{p,q}(\Omega)$. However, this  is a non constant integer-valued function. This contradicts Lemma \ref{Eunicite} and proves non existence of lifting in $B^s_{p, q}$.
  
%
%

When $n\ge 3$, the above arguments lead to the following. Let $u(x)=\d\frac{(x_1, x_2)}{|(x_1, x_2)|}$, and let $\Omega\subset \R^n$ be a smooth bounded domain. Then $u\in B^s_{p, q}(\Omega ; \so)$ and, if $0\in\Omega$, then $u$ has no $B^s_{p, q}$ lifting.
\ep

 \section{Open cases}
 \label{ope}
 \begin{case}
\l{xa1}
{\it Range.} $s>1$, $1\le p<\infty$, $p<q<\infty$, $n\ge 3$, and $sp=2$. 

\smallskip
\noindent
{\it Discussion.}
This case is complementary to Case \ref{Y}. In the above range, we conjecture that the conclusion of Case \ref{Y} still holds, i.e., that the space $B^s_{p,q}(\Omega ; \so)$ does not have the lifting property. The non restriction property (Proposition \ref{l7.26}) prevents us from extending the argument used in Case \ref{Y} to Case \ref{xa1}. 
\end{case}
 \begin{case}
\l{Z}
{\it Range.} $s=1$, $1\le p<\infty$, $1\le q<\infty$, $n\ge 3$, and $2\le p<n$. 

Or: $s=1$, $1\le p<\infty$, $ q=\infty$, $n\ge 3$, and $2< p\le n$.

\smallskip
\noindent
{\it Discussion.}
When $p=q=2$, $B^1_{2,2}(\Omega ; \so)=H^1(\Omega ; \so)$ does have the lifting property \cite[Lemma 1]{bethuelzheng}. The remaining cases are open. The major difficulty arises from the extension of Lemma \ref{at3} to the range considered in Case \ref{Z}.
\end{case}

\begin{case}
\l{T}
{\it Range.} $s=0$, $1\le p<\infty$, $1\le q<\infty$ (and arbitrary $n$).

\smallskip
\noindent
{\it Discussion.} As explained in Remark \ref{aa1}, we consider only measurable functions $u:\Omega\to\so$. We let $B^0_{p, q}(\Omega ; \so):=\{ u:\Omega\to\so;\, u \text{ measurable and }u\in B^0_{p,q}\}$, and for $u$ in this space we are looking for a phase $\va\in L^1_{loc}\cap B^0_{p,q}$. 

Note that $B^0_{p,\infty}(\Omega ; \so)$ does have the lifting property. Indeed, in this case we have $L^\infty\subset B^0_{p,\infty}$ (Lemma \ref{ia2}) and then it suffices to argue as in the proof of Case \ref{aa1}. More generally, $B^0_{p,q}(\Omega ; \so)$ has the lifting property when $L^\infty\ho B^0_{p,q}$.\footnote{ A special case of this is $p=q=2$, since $B^0_{2,2}=L^2$. Another special case is $1<p\le 2\le q$. Indeed, in that case we have  $L^\infty\ho L^p=F^0_{p,2}\ho B^0_{p,q}$ \cite[Section 2.3.5, p. 51]{triebel2}, \cite[Section 2.3.2, Proposition 2, p. 47]{triebel2}.} The remaining cases are open.
\end{case}
\begin{case}
\l{xa3}
{\it Range.}
$0<s\le 1$, $p=1/s$, $q=\infty$ (and arbitrary $n$).

\smallskip
\noindent
{\it Discussion.}
We do not know whether $B^s_{p,q}(\Omega ; \so)$ does have the lifting property.
\end{case}
\begin{case}
\l{xa4}
{\it Range.}
$0<s\le 1$, $1< p<\infty$, $q=\infty$, $n\ge 3$, and $sp=n$.

\smallskip
\noindent
{\it Discussion.}
We do not know whether $B^s_{p,q}(\Omega ; \so)$ does have the lifting property. The difficulty common to Cases \ref{xa3} and \ref{xa4} is that in these ranges $B^s_{p,\infty}\not\subset\VMO$, and thus we are unable to rely on the strategy used in Cases \ref{X} and \ref{kc3}.
\end{case}
%

\section{Analysis in Besov spaces}
${}$

The results we state here are valid when $\Omega$ is a smooth bounded domain in $\R^n$, or $(0,1)^n$ or $\T^n$. However, in the proofs we will consider only one of these sets, the most convenient for the proof.

\subsection{Embeddings}
\l{ape}  
${}$
        
\begin{lemma}\label{Besovemb}  Let $0<s_1<s_0<\infty$, $1\le p_0<\infty$, $1\le p_1< \infty$, $1\le q_0\leq \infty$ and $1\le q_1\leq \infty$. Then the following hold.
\begin{enumerate}
\item
If $q_0<q_1$, then $B^s_{p,q_0}\ho B^s_{p,q_1}$.
\item If $s_0-n/p_0=s_1-n/p_1$, then  $ B^{s_0}_{p_0,q_0}    \hookrightarrow B^{s_1}_{p_1,q_0}$.     
\item If $s_0-n/p_0>s_1-n/p_1$, then  $ B^{s_0}_{p_0,q_0}    \hookrightarrow B^{s_1}_{p_1,q_1}$.   
\item  If  $B^{s_0}_{p_0,q_0}    \hookrightarrow B^{s_1}_{p_1,q_1}$,   then $s_0-n/p_0\geq s_1-n/p_1$.
\end{enumerate}

Consequently, when $q_0\leq q_1$,
\begin{equation} \label{equiv}
  B^{s_0}_{p_0,q_0}    \hookrightarrow B^{s_1}_{p_1,q_1} \iff s_0-\frac{n}{p_0}\geq s_1-\frac{n}{p_1}.
  \end{equation}
\end{lemma}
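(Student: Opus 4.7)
The plan is to work from the Littlewood--Paley characterization of Besov norms recalled in Section~\ref{mm7}, which identifies $\|f\|_{B^s_{p,q}}$ with the $l^q(L^p)$-norm of its sequence of dyadic blocks, and to handle the four assertions in turn. Part~(i) is immediate from the monotonicity $l^{q_0}\hookrightarrow l^{q_1}$ of sequence spaces when $q_0\le q_1$, applied to the sequence $(2^{sj}\|f_j\|_{L^p})_j$. For part~(ii), the main input is the Bernstein--Nikolski\u\i{} inequality: for any $g\in{\cal S}'(\R^n)$ with $\supp\widehat g\subset B(0,c\,2^j)$ and any $1\le p_0\le p_1\le\infty$, one has $\|g\|_{L^{p_1}}\lesssim 2^{jn(1/p_0-1/p_1)}\|g\|_{L^{p_0}}$. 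Since the hypothesis $s_0>s_1$ together with $s_0-n/p_0=s_1-n/p_1$ forces $p_0\le p_1$, Bernstein applied to each Littlewood--Paley block yields $2^{js_1}\|f_j\|_{L^{p_1}}\lesssim 2^{js_0}\|f_j\|_{L^{p_0}}$, and an $l^{q_0}$ summation closes the argument.

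For part~(iii), I would combine (i), (ii), and the elementary smoothness-lowering embedding $B^{s}_{p,q}\hookrightarrow B^{s'}_{p,q'}$ valid whenever $s>s'$ and for any $q,q'$ (obtained by factoring the exponentially decaying sequence $2^{-(s-s')j}$ out of the $l^q$ norm via H\"older). There are two sub-cases. If $p_0\le p_1$, set $\tilde s := s_0 + n(1/p_1-1/p_0)$, so that $\tilde s - n/p_1 = s_0 - n/p_0$ and $s_1 < \tilde s \le s_0$; applying (ii) with exponents $s_0$ and $\tilde s$ gives $B^{s_0}_{p_0,q_0}\hookrightarrow B^{\tilde s}_{p_1,q_0}$, after which a smoothness-lowering step from $\tilde s$ down to $s_1$ lands in $B^{s_1}_{p_1,q_1}$. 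If instead $p_0>p_1$, the finiteness of $|\Omega|$ gives, via H\"older's inequality applied to the difference characterization \eqref{equivnormhomogrn}, the inclusion $B^{s_0}_{p_0,q_0}(\Omega)\hookrightarrow B^{s_0}_{p_1,q_0}(\Omega)$, and the same smoothness-lowering finishes the argument.

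For part~(iv), I would use a scaling argument. Fix a non-zero $\phi\in C^\infty_c(\R^n)$ and a point $x_0$ in the interior of $\Omega$, and set $\phi_\lambda(x):=\phi(\lambda(x-x_0))$. For $\lambda$ large, $\supp\phi_\lambda\subset\Omega$, and a change of variables in Proposition~\ref{p2.4} yields $|\phi_\lambda|_{B^s_{p,q}}=\lambda^{s-n/p}|\phi|_{B^s_{p,q}}$ and $\|\phi_\lambda\|_{L^p}=\lambda^{-n/p}\|\phi\|_{L^p}$, so $\|\phi_\lambda\|_{B^s_{p,q}}\sim \lambda^{s-n/p}$ as $\lambda\to\infty$ (using $s>0$). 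Plugging $\phi_\lambda$ into the hypothesized embedding forces $\lambda^{s_1-n/p_1}\lesssim\lambda^{s_0-n/p_0}$ for all large $\lambda$, whence $s_0-n/p_0\ge s_1-n/p_1$. The main delicate point will be the sub-case $p_0>p_1$ of part~(iii): the Littlewood--Paley blocks live on $\R^n$, so the inclusion $L^{p_0}\hookrightarrow L^{p_1}$ on the bounded $\Omega$ does not pass through the blocks directly, which is why I would rely on the difference characterization there. The equivalence \eqref{equiv} then follows at once: $\Leftarrow$ combines (ii), (i), and (iii), while $\Rightarrow$ is part~(iv).
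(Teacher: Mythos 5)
Your proposal is correct, and for items~1--3 it is genuinely more self-contained than the paper's argument: the paper simply cites \cite[Sections 3.2.4, 3.3.1]{triebel2} and \cite[Theorem 1, p.~82]{runstsickel}, whereas you derive item~1 from $l^{q_0}\hookrightarrow l^{q_1}$, item~2 from the Bernstein--Nikolski\u\i{} inequality on Littlewood--Paley blocks (correctly noting that $s_0>s_1$ and equal differential dimensions force $p_0<p_1$), and item~3 by interposing an auxiliary smoothness $\tilde s$ and a smoothness-lowering step, splitting into $p_0\le p_1$ (where one works on $\R^n$ and restricts) and $p_0>p_1$ (where the boundedness of $\Omega$ and H\"older on the difference seminorm supply $B^{s_0}_{p_0,q_0}(\Omega)\hookrightarrow B^{s_0}_{p_1,q_0}(\Omega)$). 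The one small gap is that you invoke the difference characterization \eqref{equivnormhomogrn} \emph{on the bounded domain} $\Omega$; that equivalence (with $\Delta_h^M$ truncated as in \eqref{ia1}) is true for the nice $\Omega$ considered here and is tacitly used elsewhere in the paper, but it is not literally the statement of Proposition~\ref{p2.4}, so a reference or a word of justification would tighten the argument. For item~4 you and the paper use the same idea: the paper says only ``scaling argument,'' and your computation (with the correct exponents $\lambda^{-n/p}$ and $\lambda^{s-n/p}$, and the observation that $s>0$ makes the seminorm dominate for large $\lambda$) is exactly the one intended. The final equivalence \eqref{equiv} is assembled from items~1--4 exactly as in the paper.
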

\begin{proof} For item 1, see \cite[Section 3.2.4]{triebel2}.  For items  2 and  3, see \cite[Section 3.3.1]{triebel2} or  \cite[Theorem 1, p. 82]{runstsickel}. Item 4 follows from a scaling argument. And  \eqref{equiv} is an immediate consequence of items 1--4.  
\end{proof}
For the next result, see e.g. \cite[Section 2.7.1, Remark 2, pp. 130-131]{triebel2}.
\bl
\l{ka1}
Let $s>0$, $1\le p<\infty$, $1\le q\le\infty$ be such that $sp>n$. Then $B^s_{p,q}(\Omega)\ho C^0(\overline\Omega)$.
\el
 \bl
 \l{ia2}
 Let $s< 0$, $1\le p<\infty$ and $1\le q\le \infty$. Then $L^\infty\ho B^s_{p,q}$.
 
 Similarly, if $1\le p\le\infty$, then $L^\infty\ho B^0_{p,\infty}$.
 \el     
 \bp
 We present the argument when $\Omega=\T^n$. 
 Let $f\in L^\infty$, with Fourier coefficients $(a_m)_{m\in\Z^n}$. Consider, as in Definition \ref{periodicbesov}, the functions
 \bes
f_j(x):=\sum_{m\in\Z^n}a_m\va_j(2\pi m)\, e^{2\im \pi m\cdot x},\ \fo j\in\N.
 \ees
 By the (periodic version of)  the multiplier theorem \cite[Section 9.2.2, Theorem, p. 267]{triebel2} we have
 \be
 \l{kb1}
 \|f_j\|_{L^p}\lesssim \|f\|_{L^p},\ \fo 1\le p\le \infty,\ \fo j\in\N. 
 \ee
 We find that $\|f_j\|_{L^p}\lesssim \|f\|_{L^p}\le \|f\|_{L^\infty}$, and thus (by Definition \ref{periodicbesov}, and with the usual modification when $q=\infty$)
 \bes
 \|f\|_{B^s_{p,q}}\lesssim \left(\sum_j 2^{sjq}\right)^{1/q}<\infty.
 \ees
 The second part of the lemma follows from a similar argument. The proof is left to the reader.
 \ep

 An analogous proof  leads to the following result. Details are left to the reader.
 \bl
 \l{kc2}
 Let $s>0$, $1\le p<\infty$ and $1\le q\le\infty$. Then $B^s_{p,q}\ho L^p$. 
 
 More generally, if $k\in\N$, $s>k$, $1\le p<\infty$, and $1\le q\le \infty$, then $B^s_{p,q}\ho W^{k, p}$.
 \el
 \begin{lemma}\label{B-VMO}  Let $0<s<\infty$, $1\le p<\infty$ and $1\le q<\infty$ be such that $sp=n$. Then
$\d B^{s}_{p,q}   \hookrightarrow  \VMO$.\\
Same conclusion if $0<s<\infty$, $1\le p<\infty$ and $q=\infty$ are such that $sp>n$.
\end{lemma}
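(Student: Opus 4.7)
The plan is to handle the two cases of the lemma separately.

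The second case ($sp > n$, $q = \infty$) is immediate from Lemma \ref{ka1}: we have $B^s_{p,\infty}(\Omega) \hookrightarrow C^0(\overline{\Omega})$, and continuous functions on a compact set are uniformly continuous, so they belong to $\VMO$.

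For the main case ($sp = n$, $q < \infty$), I would work on $\T^n$ and use the Littlewood-Paley decomposition $f = \sum_j f_j$, writing $a_j := 2^{js}\|f_j\|_{L^p}$ so that $\|f\|_{B^s_{p,q}(\T^n)} \sim \|(a_j)\|_{\ell^q}$. Fix a ball $B = B(x,r) \subset \T^n$ with $2^{-J} \le r < 2^{-J+1}$ and split $f = f_{\le J} + f_{>J}$, where $f_{\le J} := \sum_{j \le J} f_j$. Combining the mean-value theorem with Bernstein's inequality $\|\nabla f_j\|_{L^\infty} \lesssim 2^{j(1+n/p)}\|f_j\|_{L^p}$ and the critical equality $s = n/p$ yields
\begin{equation*}
\fint_B |f_{\le J} - (f_{\le J})_B|\, dy \lesssim r \sum_{j \le J} \|\nabla f_j\|_{L^\infty} \lesssim \sum_{j \le J} 2^{j-J} a_j,
\end{equation*}
while H\"older's inequality on $B$ gives
\begin{equation*}
\fint_B |f_{>J}|\, dy \le |B|^{-1/p} \sum_{j > J} \|f_j\|_{L^p} \lesssim \sum_{j > J} 2^{(J-j)s} a_j.
\end{equation*}
Each right-hand side is a convolution of the sequence $(a_j)$ with a kernel in $\ell^1(\Z)$, so Young's inequality (via $\ell^1 \ast \ell^q \hookrightarrow \ell^q \hookrightarrow \ell^\infty$) bounds each expression uniformly in $J$ by $\|(a_j)\|_{\ell^q} \sim \|f\|_{B^s_{p,q}}$, proving the BMO embedding.

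To upgrade to VMO, I note that the assumption $q < \infty$ forces $a_j \to 0$, and both convolution sums then tend to $0$ as $J \to \infty$ (equivalently as $r \to 0$) by the dominated convergence theorem, uniformly in the center $x$. This is precisely the vanishing mean oscillation property. The main subtlety is the critical-exponent nature of the argument: at $sp = n$, neither $f_{\le J}$ nor $f_{>J}$ lies individually in $L^\infty$, and the na\"ive sum of Bernstein bounds would diverge logarithmically. It is the cancellation coming from subtracting the mean over $B$ (extracting the decay factor $2^{j-J}$ for the low frequencies) and the precise matching of scales $|B|^{-1/p} \sim 2^{Js}$ (extracting the factor $2^{(J-j)s}$ for the high frequencies) that produce the two geometric decays needed to invoke Young's inequality.
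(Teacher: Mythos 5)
Your argument is correct, but it is a genuinely different route from the one in the paper. The paper's proof is a two-line reduction via embeddings: it picks $p_1>\max\{n,p,q\}$, $s_1:=n/p_1$ (not an integer), uses the Sobolev-type Besov embeddings (Lemma~\ref{Besovemb}) to get $B^s_{p,q}\hookrightarrow B^{s_1}_{p_1,q}\hookrightarrow B^{s_1}_{p_1,p_1}=W^{s_1,p_1}$, and then invokes the known embedding $W^{s_1,p_1}\hookrightarrow\VMO$ for $s_1p_1=n$ from Brezis--Nirenberg; the $q=\infty$, $sp>n$ case is reduced to the first case by another embedding. Your proof instead works directly from the Littlewood--Paley decomposition: the low-frequency part is controlled via the mean oscillation by $r\sum_{j\le J}\|\nabla f_j\|_{L^\infty}$ together with the two Bernstein inequalities and the critical balance $s=n/p$, the high-frequency part via H\"older and $|B|^{-1/p}\sim 2^{Js}$, and both tails are $\ell^1$-convolutions of $(a_j)$, so that $\ell^1*\ell^q\subset\ell^\infty$ gives the BMO bound, while $q<\infty\Rightarrow a_j\to0$ plus dominated convergence gives the VMO vanishing uniformly in the center. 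What this buys is a self-contained proof not relying on the $W^{s,p}\hookrightarrow\VMO$ fact, and it makes transparent exactly where the hypothesis $q<\infty$ is used (to push the $\ell^\infty$ bound down to a vanishing limit); the price is that it is longer and needs the Bernstein machinery, whereas the paper's proof leans on results already in its toolbox. Your treatment of the $q=\infty$, $sp>n$ case via Lemma~\ref{ka1} (continuity on $\overline\Omega$ implies uniform continuity, hence VMO) is simpler than the paper's second reduction and is perfectly valid.
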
  
\begin{proof}
Assume first that $q<\infty$.  Let $p_1>\max\left\{n,p,q\right\}$ and set $s_1:=n/{p_1}$. By Lemma \ref{Besovemb} and the fact that $s_1$ is not an integer, we have
\bes
 B^s_{p,q}\hookrightarrow B^{s_1}_{p_1,q}\hookrightarrow B^{s_1}_{p_1,p_1}=W^{s_1,p_1}.
\ees
 It then suffices to invoke the embedding
\bes
 W^{s_1,p_1}\hookrightarrow \VMO\mbox{ when }s_1p_1=n
 \ \text{\cite[Example 2, p. 210]{brezisnirenberg1}}.\ees    
 The case  where  $q=\infty$ is obtained via the first part of the proof. Indeed, it suffices to choose   $0<s_1<\infty$, $1\le p_1<\infty$ and $0<q_1<\infty$   such that $s_1p_1=n$ and $B^s_{p,q}\ho B^{s_1}_{p_1,q_1}$. Such $s_1$, $p_1$ and $q_1$ do exist, by Lemma \ref{Besovemb}.
 \end{proof}

 For the following special case of the Gagliardo-Nirenberg embeddings, see e.g.  \cite[Remark 1, pp. 39-40]{runstsickel}.
 \begin{lemma} \label{gn}
 Let $0<s<\infty$, $1\le p<\infty$, $1\le q\leq \infty$, and $0<\theta<1$. Then $B^s_{p,q}\cap L^{\infty}\hookrightarrow B^{\theta s}_{p/\theta,q/\theta}$.
 \end{lemma}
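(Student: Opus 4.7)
The plan is to apply a dyadic interpolation argument: decompose $f=\sum_j f_j$ into its Littlewood--Paley blocks and interpolate between the uniform bound on $f$ and the $L^p$-bound on each block. I shall argue on $\R^n$, the cases of $(0,1)^n$ and $\T^n$ being entirely analogous.

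The first ingredient is the Mihlin-type multiplier estimate already used in the proof of Lemma \ref{ia2}: applied to the cut-offs $\varphi_j$ (and with uniform constants in $j$), it gives $\|f_j\|_{L^r}\lesssim \|f\|_{L^r}$ for every $1\le r\le\infty$ and every $j\in\N$. In particular, $\|f_j\|_{L^\infty}\lesssim \|f\|_{L^\infty}$ independently of $j$.

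The second ingredient is the trivial $L^r$-interpolation $\|f_j\|_{L^{p/\theta}}\le \|f_j\|_{L^\infty}^{\,1-\theta}\|f_j\|_{L^p}^{\,\theta}$, which follows from $1/(p/\theta)=(1-\theta)/\infty+\theta/p$. Combined with the previous step and multiplied by $2^{\theta s j}$, this gives
\[
2^{\theta s j}\|f_j\|_{L^{p/\theta}}\lesssim \|f\|_{L^\infty}^{\,1-\theta}\,\bigl(2^{s j}\|f_j\|_{L^p}\bigr)^{\theta}.
\]
Raising to the power $q/\theta$, summing over $j\in\N$ (with the usual modification when $q=\infty$), and extracting the $(\theta/q)$-th root, the Littlewood--Paley characterisation of the Besov norm --- valid with the same blocks $f_j$ on both sides since the norm is independent of the choice of dyadic partition --- yields the multiplicative Gagliardo--Nirenberg estimate
\[
\|f\|_{B^{\theta s}_{p/\theta,q/\theta}}\lesssim \|f\|_{L^\infty}^{\,1-\theta}\|f\|_{B^s_{p,q}}^{\,\theta}.
\]
The announced embedding is then immediate from the elementary inequality $a^{1-\theta}b^\theta\le a+b$ for $a,b\ge 0$, applied to $a=\|f\|_{L^\infty}$ and $b=\|f\|_{B^s_{p,q}}$.

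No step presents a genuine obstacle; the only point deserving a bit of care is the uniform-in-$j$, uniform-in-$r$ multiplier bound on the dyadic blocks, which is precisely the estimate established in the course of Lemma \ref{ia2}. Everything else is elementary interpolation and book-keeping with the Littlewood--Paley decomposition.
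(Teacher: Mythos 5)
Your proposal is correct. In fact, the paper does not give a proof of this lemma at all --- it simply cites \cite[Remark 1, pp.\ 39--40]{runstsickel} --- so there is no internal argument to compare against, and a self-contained proof is welcome here. Your argument is the standard Littlewood--Paley route to the Gagliardo--Nirenberg multiplicative inequality in Besov scales: the uniform-in-$j$ Young/multiplier bound $\|f_j\|_{L^r}\lesssim\|f\|_{L^r}$ for all $1\le r\le\infty$, the three-line (H\"older) interpolation $\|f_j\|_{L^{p/\theta}}\le\|f_j\|_{L^\infty}^{1-\theta}\|f_j\|_{L^p}^{\theta}$, and the observation that the same dyadic blocks $f_j$ compute both Besov norms (since $p/\theta\ge 1$, $q/\theta\ge 1$, and two admissible partitions of unity give equivalent norms). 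Raising to the power $q/\theta$ and summing then yields
\[
\|f\|_{B^{\theta s}_{p/\theta,q/\theta}}\lesssim\|f\|_{L^\infty}^{1-\theta}\,\|f\|_{B^s_{p,q}}^{\theta},
\]
and the embedding follows from $a^{1-\theta}b^{\theta}\le a+b$. Two small remarks: first, your appeal to the proof of Lemma~\ref{ia2} for the uniform multiplier bound is a slightly loose citation, since Lemma~\ref{ia2} is stated for the torus, but the same Young-inequality estimate $\|f_j\|_{L^r}\le\|\mathcal{F}^{-1}\varphi_j\|_{L^1}\|f\|_{L^r}$ with $\sup_j\|\mathcal{F}^{-1}\varphi_j\|_{L^1}<\infty$ works verbatim on $\R^n$, so the step is sound. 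Second, you could streamline the exposition by noting that the concluding multiplicative estimate is actually stronger than the embedding claimed in the lemma, which is a point worth recording since it is the form most often used in applications.
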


\subsection{Restrictions}
\l{mo6}
${}$

{\it Captatio benevolenti\ae}. Let $f\in L^1(\R^2)$. Then, for a.e., $y\in\R$, the restriction  $f(\cdot, y)$ of $f$ to the line $\R\times\{y\}$ belongs to $L^1$. In this section and the next one, we examine some analogues of this property in the framework of Besov spaces. 

For this purpose, we first introduce some notation for partial functions. 
Let $\alpha\subset \{1,\ldots, n\}$ and set $\overline\alpha:=\{1,\ldots, n\}\setminus \alpha$. If $x=(x_1,\ldots, x_n)\in\R^n$, then we identify $x$ with the couple $(x_\alpha, x_{\overline\alpha})$, where $x_\alpha:=(x_j)_{j\in\alpha}$ and $x_{\overline\alpha}:=(x_j)_{j\in\overline\alpha}$. 
Given a function $f=f(x_1,\ldots, x_n)$, we let $f_\alpha=f_\alpha(x_\alpha)$ denote the partial function $x_{\overline\alpha}\mapsto f(x)$. 
Another useful notation: given an integer $m$ such that $1\le m\le n$, set
\bes
I(n-m,n):=\{\alpha \subset \{1,\ldots, n\};\, \#\alpha=n-m\}.
\ees
Thus, when $\alpha\in I(n-m,n)$,  $f_\alpha(x_\alpha)$ is a function of $m$ variables. 

When $q=p$, we have the following result.
\bl
\l{oa1}
Let $1\le m<n$. Let $s>0$ and $1\le p<\infty$. Let $f\in B^s_{p,p}(\R^n)$.
\ben
\item
Let $\alpha\in I(n-m,n)$. Then, for a.e. $x_\alpha\in\R^{n-m}$, we have $f_\alpha(x_\alpha)\in B^s_{p,p}(\R^m)$.
\item
We have
\bes
\|f\|_{B^s_{p,p}(\R^n)}^p\sim  \sum_{\alpha\in I(n-m,n)}\int_{\R^{n-m}}\|f_\alpha(x_\alpha)\|_{B^s_{p,p}(\R^m)}^p\, dx_\alpha. 
\ees
\een
\el
\bp
For the case where $m=1$, see 
 \cite[Section 2.5.13,  Theorem, (i), p. 115]{triebel2}. The general case is obtained by a straightforward induction on $m$.
 \ep
 \bl
\l{mo7}
Let $s>0$, $1\le p<\infty$ and $1\le q\le p$. Let $1\le m< n$ be an integer. Assume that $sp\ge m$ and let $f\in B^s_{p, q}(\T^n)$. Then, for every $\alpha\in I(n-m,n)$ and for a.e. $x_\alpha\in\T^{n-m}$, the partial map $f_\alpha(x_\alpha)$ belongs to $\VMO(\T^m)$.

Same conclusion if $s>0$, $1\le p<\infty$ and $1\le q\le \infty$, and we have $sp>m$. 

Similar conclusions when $\Omega=\R^n$ or $(0,1)^n$.
\el
\bp
In view of the Sobolev embeddings (Lemma \ref{Besovemb}), we may assume that $sp=m$ and $q=p$. By Lemma \ref{oa1} and Lemma \ref{B-VMO}, for a.e. $x_\alpha$ we have $f_\alpha(x_\alpha)\in B^s_{p,p}(\T^m)\ho \VMO(\T^m)$.
\ep
\bl
\l{ad1}
Let $s>0$, $1\le p<\infty$ and $1\le q<\infty$. Let $M>s$ be an integer. Let  $f\in B^s_{p,q}$.  For $x'\in \T^{n-1}$, consider the partial map $v(x_n)=v_{x'}(x_n):=f(x',x_n)$, with $x_n\in\T$. Then there exists a sequence  $(t_l)\subset (0,\infty)$ such that $t_l\to 0$ and for a.e. $x'\in\T^{n-1}$, we have
\be
\l{ce1}
\lim_{l\to\infty}\frac{\left\|\Delta_{t_l}^Mv\right\|_{L^p(\T)}}{t_l^{s}}=0.
\ee
More generally, given a finite number of functions $f_j\in B^{s_j}_{p_j,q_j}$, with $s_j>0$, $1\le p_j<\infty$ and $1\le q_j<\infty$, and given an integer $M>\max_j s_j$,  we may choose a common set $A$ of full measure in $\T^{n-1}$ and a  sequence $(t_l)$ such that the analog of  \eqref{ce1}, i.e., 
\be
\l{cf1}
\lim_{l\to\infty}\frac{\left\|\Delta_{t_l}^Mf_j(x',\cdot)\right\|_{L^{p_j}(\T)}}{ t_l^{s_j}}=0,
\ee
 holds simultaneously for all  $j$ and all $x'\in A$.
\el
\bp
We treat the case of a single function; the general case is similar.

Set $g_t:=\left\|\Delta^M_{t e_n}f\right\|_{L^p}$. By \eqref{equivnormhomogrn}, we have $\int_0^1t^{-sq-1}g_t^q\, dt<\infty$, which is equivalent to 
$
\int_{1/2}^1\sum_{m\ge 0}2^{msq}g_{2^{-m}\sigma}^q\, d\sigma<\infty$. Therefore, there exists some $\sigma\in (1/2,1)$ such that 
\be
\l{ce2}
\sum_{m\ge 0}2^{msq}g_{2^{-m}\sigma}^q <\infty. 
\ee
By \eqref{ce2} , we find that
\be
\l{ce3}
\lim_{m\to \infty}\frac{g_{2^{-m}\sigma}}{(2^{-m}\sigma)^s}=0.
\ee
Using \eqref{ce3} we find that, along a subsequence $(m_l)$, we have 
\bes
\lim_{m\to \infty}\frac{\|\Delta_{2^{-m_l}\sigma}v\|_{L^p}}{(2^{-m_l}\sigma)^s}=0\quad\text{for a.e. }x'\in\T^{n-1}.
\ees 
This implies \eqref{ce1} with $t_l:=2^{-m_l}\sigma$.
\ep

%
%
          
    
    \subsection{(Non) restrictions}

${}$

We now address the question whether, given $f\in B^s_{p, q}(\R^2)$, we have $f(x, \cdot)\in B^s_{p, q}(\R)$ for a.e. $x\in\R$. This kind of questions can also be asked in higher dimensions. The answer crucially depends on the sign of $q-p$.

 We start with a simple result.
 
\bpr
\l{qh1}
Let $s>0$ and $1\le q\le p<\infty$. Let $f\in B^s_{p,q}(\R^2)$. Then for a.e. $x\in\R$ we have $f(x, \cdot)\in B^s_{p,q}(\R)$. 
\epr
 \bp
 Let $f\in B^s_{p,q}(\R^2)$. Using \eqref{equivnormhomogrn} (part 2) and H\"older's inequality, we find that for every finite interval $[a,b]\subset\R$ and $M>s$ we have
 \bes
 \begin{aligned}
 \int_a^b |f(x,\cdot)|_{B^s_{p,q}(\R)}^q\, dx& \sim  
 \int_a^b\int_\R \frac 1{|h|^{sq+1}}\left(\int_{\R}|\Delta^M_{h e_2}f(x, y)|^p\, dy\right)^{q/p}\, dh dx\\
 &\le (b-a)^{(p-q)/p}\, \int_\R \frac 1{|h|^{sq+1}}\left(\int_{[a,b]\times\R}|\Delta^M_{h e_2}f(x, y)|^p\, dxdy\right)^{q/p}\, dh\\
 &\lesssim |f|_{B^s_{p,q}(\R^2)}^q<\infty\end{aligned}
 \ees
 whence the conclusion.
  \ep
 
 When $q>p$, a striking phenomenon occurs.
\bpr
\l{l7.26}
Let $s>0$ and $1\le p<q\le\infty$. Then there exists some compactly supported $f\in B^s_{p,q}(\R^2)$ such that for a.e. $x\in (0,1)$ we have $f(x,\cdot)\not\in B^s_{p,\infty}(\R)$.

 In particular, for any $1\le r<\infty$ and a.e. $x\in (0,1)$ we have $f(x,\cdot)\not\in B^s_{p, r}(\R)$.
\epr

Before proceeding to the proof, let us note that if  $f\in B^s_{p,q}(\R^2)$ then $f\in L^p(\R^2)$, and thus the partial function $f(x,\cdot)$ is a well-defined element of $L^p(\R)$ for a.e. $x$.

\bp

Since $B^s_{p, q}(\R^2)\subset B^s_{p,\infty}(\R^2)$, $\fo q$, we may assume that $q<\infty$. 
 We rely on the characterization of Besov spaces in terms of smooth wavelets, as in Section \ref{qa6}. 

 We start by explaining the construction of $f$. Let $\psi_F$ and $\psi_M$ be as in Section \ref{qa6}. With no loss of generality, we may assume that $\supp\psi_M\subset [0,a]$ with $a\in\N$. Consider $(\alpha, \beta)\subset  (0,a)$ and $\gamma>0$ such that $\psi_M\ge\gamma$ in $[\alpha, \beta]$.
 
 Set $\delta:=\beta-\alpha>0$ and consider some integer $N$ such that $[0,1]\subset [\alpha-N\, \delta, \beta+N\, \delta]$. We look for an $f$ of the form
 \be
 \l{qb5}
 f=\sum_{\ell=-N}^N\sum_{j\ge j_0} g^\ell_j,
 \ee
 with 
 \be
 \begin{aligned}
 \l{qb6}
 g^\ell_j(x,y)=\mu_j\, 2^{-j(s-2/p)}\sum_{m_1\in I_j}  &\psi_M(2^j x-m_1-\ell\, \delta)\\
 &\times\psi_M(2^j y-m_1-2^{j+1}\, \ell\, a-\ell\, \delta).
 \end{aligned}
 \ee
 
 Here, the set $I_j$ satisfying
 \be
 \l{qb7}
 I_j\subset \{ 0, 1,\ldots, 2^j\},
 \ee
the integer $j_0$ and the  coefficients $\mu_j>0$ will be defined later. 

We consider the partial sums $f^\ell_J:=\sum_{j=j_0}^J g^\ell_j$. Clearly, we have $f^\ell_J\in C^k$ and, provided $j_0$ is sufficiently large,
\bes
\sup f^\ell_J\subset K_l:=[-N\, \delta , 5/4]\times [2\ell\, a-1/4, (2\ell +1)\, a+1/4].
\ees

We next note that the compacts $K_\ell$ are mutually disjoint. Using Proposition \ref{p2.4} item 2, we easily find that
\be
\l{qb9}
\left\|\sum_{\ell=-N}^N f^\ell_J  \right\|_{B^s_{p,q}(\R^2)}^q\sim  \sum_{\ell=-N}^N \left\| f^\ell_J  \right\|_{B^s_{p,q}(\R^2)}^q.
\ee

On the other hand, if $\psi_M$ and $\psi_F$ are wavelets such that Proposition \ref{qb1} holds, then so are $\psi_F(\cdot-\lambda)$ and $\psi_M(\cdot-\lambda)$, $\fo \lambda\in\R$ \cite[Theorem 1.61 {\it (ii)}, Theorem 1.64]{triebel06}. Combining this fact with \eqref{qb9}, we find that
\be
\l{qc1}
\left\|\sum_{\ell=-N}^N f^\ell_J  \right\|_{B^s_{p,q}(\R^2)}^q\sim \sum_{j=j_0}^J \left( \# I_j\, (\mu_j)^p\right)^{q/p}.
\ee

We now make the size assumption 
\be
\l{qc2}
\sum_{j=j_0}^\infty \left( \# I_j\, (\mu_j)^p\right)^{q/p}<\infty.
\ee

 By \eqref{qc1} and \eqref{qc2}, we see that the formal series in \eqref{qb5} defines a compactly supported $f\in B^s_{p,q}(\R^2)$, with $\sum_{\ell=-N}^N f^\ell_J\to f$ in $B^s_{p,q}(\R^2)$ (and therefore in $L^p(\R^2)$) as $J\to\infty$. 
 
  We next investigate the $B^s_{p,\infty}$ norm of the restrictions $f^\ell_J (x,\cdot)$. As in \eqref{qb9}, we have
  \be
  \l{qc3}
  \left\|\sum_{\ell=-N}^N f^\ell_J (x,\cdot)\right\|_{B^s_{p,\infty}(\R)}\sim \sum_{\ell=-N}^N\|f^\ell_J(x,\cdot)\|_{B^s_{p,\infty}(\R)}.
  \ee
  
  Rewriting \eqref{qb6} as
  \be
  \l{qc4}
  \begin{aligned}
  g^\ell_j(x,y)=\mu_j\, 2^{-j(s-1/p)}\, 2^{j/p}\sum_{m_1\in I_j} & \psi_M(2^j x-m_1-\ell\, \delta)\\
  &\times \psi_M(2^j y-m_1-2^{j+1}\, \ell\, a -\ell\, \delta),
  \end{aligned}
  \ee
we obtain
\be
\l{qc5}
\|f^\ell_J(x,\cdot)\|_{B^s_{p,\infty}(\R)}^p\sim \sup_{j_0\le j\le J} 2^j\, (\mu_j)^p\, \sum_{m_1\in I_j}|\psi_M (2^j\, x- m_1-\ell\, \delta)|^p.
\ee

We now make the size assumption
\be
\l{qc6}
\sup_{j\ge j_0} 2^j\, (\mu_j)^p\, \sum_{\ell=-N}^N\sum_{m_1\in I_j}|\psi_M (2^j\, x- m_1-\ell\, \delta)|^p=\infty,\ \fo x\in [0,1].
\ee

Then we claim that for a.e. $x\in (0,1)$ we have 
\be
\l{qc7}
f(x,\cdot)\not\in B^s_{p,\infty}(\R).
\ee

 Indeed, since $\sum_{\ell=-N}^N f^\ell_J\to f$ in $L^p (\R^2)$,  for a.e. $x\in\R$ we have
\be
\l{qc8}
\sum_{\ell=-N}^\ell f^\ell_J(x,\cdot)\to f(x,\cdot)\ \text{in }L^p(\R).
\ee

We claim that for every $x\in [0,1]$ such that \eqref{qc8} holds, we have $f(x,\cdot)\not\in B^s_{p,\infty}(\R)$. Indeed, on the one hand \eqref{qc6} implies that for some $\ell$ we have $\lim_{J\to\infty}\|f^\ell_J (x,\cdot)\|_{B^s_{p,\infty}(\R)}=\infty$. We assume e.g. that this holds when $\ell=0$. Thus 
\be
\l{qc80}
\sup_{j\ge j_0} 2^j\, (\mu_j)^p\, \sum_{m_1\in I_j}|\psi_M (2^j\, x- m_1)|^p=\infty.
\ee

On the other hand, assume by contradiction that $f(x,\cdot)\in B^s_{p,\infty}(\R)$. Then we may write $f(x,\cdot)$ as in \eqref{qb3}, with coefficients as in \eqref{qb40}. In particular, taking into account the explicit formula of $g^\ell_j$ and the fact that $\sum_{\ell=-N}^N f^\ell_J(x,\cdot)\to f(x,\cdot)$ in $L^p(\R)$, we find that for $k\ge j_0$ and $m_1\in I_j$ we have  
\be
\l{qc800}
\begin{aligned}
\mu^{k, \{ M\}}_{m_1}(f(x,\cdot))&=\mu^{k, \{ M\}}_{m_1}\left(\sum_{j=j_0}^J g^0_j (x, \cdot)\right)=\mu^{k, \{ M\}}_{m_1}(g^0_k (x,\cdot))\\
&=2^{k/p}\, \mu_k\, \psi_M(2^k\, x-m_1),\ \fo J\ge k.
\end{aligned}
\ee

We obtain a contradiction combining \eqref{qc80}, \eqref{qc800} and Corollary \ref{qb400}.

It remains to construct $I_j$ and $\mu_j$ satisfying \eqref{qb7}, \eqref{qc2} and \eqref{qc6}. We will let $I_j=\llbracket  s_j,  t_j\rrbracket$, with $0\le s_j\le t_j\le 2^j$ integers to be determined later. Set $t:=q/p \in (1,\infty)$ and
\bes
\mu_j:=\left(\frac 1{(t_j-s_j+1)\, j^{1/t}\, \ln j}\right)^{1/p}.
\ees

Clearly, \eqref{qb7} and \eqref{qc2} hold. It remains to define $I_j$ in order to have \eqref{qc6}. Consider the dyadic segment $L_j:=[s_j/2^j, t_j/2^j]$. We claim that 
\be
\l{qa11}
\sum_{\ell=-N}^N\sum_{m_1\in I_j}|\psi_M (2^j\, x- m_1-\ell\, \delta)|^p\ge \gamma^p,\ \fo x\in L_j. 
\ee

Indeed, let $m_1\in [s_j, t_j]$ be the integer part of $2^j\, x$. By the definition of $\delta$ and by choice of $N$, there exists some $\ell\in \llbracket -N, N\rrbracket$ such that $\alpha\le 2^j\, x- m_1-\ell\, \delta\le \beta$, whence the conclusion. 

 By the above, \eqref{qc6} holds provided we have
 \be
 \l{qc60}
 \sup_{j\ge j_0}2^j\, (\mu_j)^p\, \one_{L_j(x)}=\infty,\ \fo x\in [0,1].
 \ee
 
  We next note that 
  \be
  \l{qc600}
  2^j\, (\mu_j)^{p}\sim \frac 1{|L_j|\, j^{1/t}\, \ln j}=\frac {u_j}{|L_j|},
  \ee
  where $u_j:=1/(j^{1/t}\, \ln j)$ satisfies 
  \be
  \l{qc6000}
  \sum_{j\ge j_0}u_j=\infty.
  \ee
  
  In view of \eqref{qc600} and \eqref{qc6000}, existence of $I_j$ satisfying \eqref{qc60} is a consequence of Lemma \ref{tempSeq} below. The proof of Proposition \ref{l7.26} is complete.\ep

\begin{lemma}\label{tempSeq}
Consider a sequence $(u_j)$ of positive numbers such that $\sum_{j\ge j_0}u_j=\infty$. Then there exists a sequence $(L_j)$ of dyadic intervals $L_j=[s_j/2^j, t_j/2^j]$, such that:
\ben
\item
$s_j, t_j\in\N$, $0\le s_j< 2^j$.
\item
 $|L_j|=o(u_j)$ as $j\to\infty$.
 \item
 Every $x\in [0,1]$ belongs to infinitely many $L_j$'s.
 \een
\end{lemma}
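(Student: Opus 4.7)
My plan is to organize the $L_j$'s into disjoint consecutive blocks, each of which tiles $[0,1]$ with adjacent dyadic intervals of length bounded by some auxiliary scale $v_j = o(u_j)$; every $x \in [0,1]$ will then be caught by at least one interval per block, hence by infinitely many $L_j$'s. First, I use the standard fact that $\sum u_j = \infty$ allows one to find $\varepsilon_j \to 0$ with $\sum \varepsilon_j u_j = \infty$ (for instance, partition the indices along a geometric doubling of $S_j := \sum_{i \le j} u_i$ and let $\varepsilon_j$ decrease suitably on these blocks). Then $v_j := \varepsilon_j u_j = o(u_j)$ and $\sum v_j = \infty$. Setting $N_j := \lfloor v_j\, 2^j \rfloor$, one has $N_j/2^j \le v_j$, and still $\sum_j N_j/2^j = \infty$, since $\sum_j(v_j - N_j/2^j) \le \sum_j 2^{-j} < \infty$.

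Next, I partition $\{j \ge j_0\}$ into disjoint consecutive blocks $B_1, B_2, \ldots$ by taking each $B_k$ to be the shortest initial segment of the remaining indices satisfying $\sum_{j \in B_k} N_j/2^j \ge 1$; this is possible thanks to the divergence above. Within a block $B_k = \{j_1 < j_2 < \cdots < j_m\}$, I build the intervals by laying them flush end-to-end starting at $0$: set $s_{j_1} := 0$ and, for $i \ge 2$,
\[
s_{j_i} := \sum_{l=1}^{i-1} N_{j_l}\, 2^{j_i - j_l}, \qquad t_{j_i} := s_{j_i} + N_{j_i}.
\]
Because the indices are strictly increasing, each exponent $j_i - j_l \ge 1$ for $l < i$, so every summand is an integer and $s_{j_i} \in \N$. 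By minimality of the block, $s_{j_i}/2^{j_i} = \sum_{l < i} N_{j_l}/2^{j_l} < 1$ for all $i \le m$, forcing $s_{j_i} < 2^{j_i}$. The intervals $L_{j_i}$ are then adjacent and their union is $[0, \sum_{i} N_{j_i}/2^{j_i}] \supseteq [0,1]$.

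The step I expect to be the main subtlety is the dyadic alignment inside each block: precisely because the indices are taken in increasing order, every $N_{j_l}/2^{j_l}$ with $l < i$ is an integer multiple of $1/2^{j_i}$, which is what makes $s_{j_i}$ an integer. Once that point is in place, the verification that $|L_j| = N_j/2^j \le v_j = o(u_j)$, and that every $x \in [0,1]$ belongs to some $L_j$ in each block (hence to infinitely many $L_j$'s), is immediate from the construction.
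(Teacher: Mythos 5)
Your proof is correct and follows essentially the same approach as the paper's: introduce an auxiliary scale with divergent sum going to zero relative to $u_j$, lay dyadic intervals end-to-end (exploiting the fact that cumulative endpoints at coarser scales remain aligned to finer dyadic grids), and restart at $0$ whenever $[0,1]$ has been covered. The only difference is cosmetic: you partition the indices into blocks a priori, whereas the paper performs the restart on the fly inside the inductive step.
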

\bp
Consider a sequence $(v_j)$ of positive numbers such that $\sum_{j\ge j_0}v_j\, u_j=\infty$ and $v_j\to 0$. Let $L_{j_0}$ be the largest dyadic interval of the form $[0, t_{j_0}/2^{j_0}]$ of length $\le v_{j_0}\, u_{j_0}$. This defines $s_{j_0}=0$ and $t_{j_0}$.

 Assuming $L_j=[s_j/2^j, t_j/2^j]=[a_j, b_j]$ constructed for some $j\ge j_0$, one of the following two occurs. Either $b_j<1$ and then we let $L_{j+1}$ be the largest dyadic interval of the form $[2 t_{j}/2^{j+1}, t_{j+1}/2^{j+1}]$ such that $|L_{j+1}|\le v_{j+1}\, u_{j+1}$. Or $b_j\ge 1$, and then we let $L_{j+1}$ be the largest dyadic interval of the form $[0, t_{j+1}/2^{j+1}]$  such that $|L_{j+1}|\le v_{j+1}\, u_{j+1}$.
 
 Using the assumption $\sum_{j\ge j_0}v_j\, u_j=\infty$ and the fact that $|L_j|\ge v_j\, u_j-2^{-j}$, we easily find that for every $j\ge j_0$ there exists some $k>j$ such that $L_k=[a_k, b_k]$ satisfies $b_k\ge 1$, and thus the intervals $L_j$ cover each point $x\in [0,1]$ infinitely many times. \ep
 
\br
\l{r10}
Following a suggestion of the first author, Brasseur  investigated the non restriction property established in Proposition \ref{l7.26}. In \cite{brasseur} (which is independent of the present work), Brasseur extends Proposition \ref{l7.26} to the full range $0<p<q\le \infty$; the construction is somewhat similar to ours (based on the size of the coefficients $\mu_j$ in the decomposition \eqref{qb6}), but relying on a  different decomposition (subatomic instead of wavelets). \cite{brasseur} also contains an interesting positive result: it exhibits function spaces $X$ intermediate between $B^s_{p,q}(\R)$ and $\d\bigcup_{\ve>0}B^{s-\ve}_{p, q}(\R)$  such that, if $f\in B^s_{p,q}(\R^2)$, then for a.e. $x\in\R$ we have $f(x,\cdot)\in X$.
\er

\subsection{Poincar\'e type inequalities}  
${}$

The next  Poincar\'e type inequality for Besov spaces  is certainly well-known, but we were unable to find a reference in the literature.       
\begin{lemma}
\l{ad2}
Let $0<s<1$, $1\leq p<\infty$, and $1\le q\le\infty$.  Then we have 
\begin{equation} \label{PBesov}
\left\Vert f-\fint f\right\Vert_{L^p}\lesssim \left\vert f\right\vert_{B^{s}_{p,q}},\quad \fo f:\Omega\to\R\text{ measurable function}.
\end{equation}
\end{lemma}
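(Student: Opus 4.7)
The plan is to reduce to the endpoint case $q=\infty$ and then estimate directly with Jensen's inequality and the first-order difference characterization of the semi-norm. The main reason the argument ends up being short is that $M=1$ is admissible in Proposition~\ref{p2.4} when $0<s<1$, so every first-order difference $\Delta_h g$ is controlled pointwise in $h$ by the Besov semi-norm times $|h|^s$.

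For the reduction, since $\|(a_j)\|_{\ell^\infty}\le \|(a_j)\|_{\ell^q}$ for any sequence and any $q\in [1,\infty]$, applying this inside the Littlewood--Paley definition of the homogeneous Besov semi-norm on $\R^n$ gives $|g|_{B^s_{p,\infty}(\R^n)}\le |g|_{B^s_{p,q}(\R^n)}$ for every $g\in \dot B^s_{p,q}(\R^n)$ (a semi-norm version of Lemma~\ref{Besovemb}(1)). Passing to the infimum over extensions yields $|f|_{B^s_{p,\infty}(\Omega)}\le |f|_{B^s_{p,q}(\Omega)}$, so it suffices to establish \eqref{PBesov} in the case $q=\infty$.

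Once $q=\infty$ is fixed, I would choose an extension $g\in \dot B^s_{p,\infty}(\R^n)$ with $g|_\Omega=f$ and $|g|_{B^s_{p,\infty}(\R^n)}\le 2\,|f|_{B^s_{p,\infty}(\Omega)}$; since $g|_\Omega=f$, we have $\fint_\Omega g=\fint_\Omega f$. Jensen's inequality applied to $t\mapsto t^p$, together with the change of variables $h=y-x$, then gives
\begin{equation*}
\left\|f-\fint_\Omega f\right\|_{L^p(\Omega)}^p \;\le\; \fint_\Omega\int_\Omega |g(x)-g(y)|^p\, dy\, dx \;\lesssim\; \int_{|h|\le D}\|\Delta_h g\|_{L^p(\R^n)}^p\, dh,
\end{equation*}
where $D:=\operatorname{diam}(\Omega)$ and the implicit constant depends on $|\Omega|$.

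To close the estimate, I invoke Proposition~\ref{p2.4} with $M=1$ and $q=\infty$, which yields the pointwise-in-$h$ bound $\|\Delta_h g\|_{L^p(\R^n)}\lesssim |h|^s\, |g|_{B^s_{p,\infty}(\R^n)}$. Plugging this into the previous display and noting that $\int_{|h|\le D}|h|^{sp}\, dh$ is finite (since $sp>0>-n$) gives
\begin{equation*}
\left\|f-\fint_\Omega f\right\|_{L^p(\Omega)}\lesssim |g|_{B^s_{p,\infty}(\R^n)}\lesssim |f|_{B^s_{p,q}(\Omega)},
\end{equation*}
which is the claim. I do not anticipate a serious obstacle: the only conceptual step is the reduction to $q=\infty$, after which the estimate is routine.
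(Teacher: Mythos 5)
Your approach is genuinely different from the paper's and, modulo one detour, it works and is simpler. The paper first proves \eqref{PBesov} for $f\in B^s_{p,q}$ by a non-constructive compactness argument (compactness of $B^s_{p,q}\hookrightarrow L^p$, then argue by contradiction), and then handles a general measurable $f$ with finite semi-norm by truncating, invoking the first part on $\Phi_k(f)\in L^\infty\subset L^p$, and passing to the limit --- which forces a preliminary reduction from $q=\infty$ to $q<\infty$ so that dominated convergence applies. Your plan --- reduce to $q=\infty$ (the weakest seminorm), then pointwise-control every first-order difference by $|h|^s$ and close with Jensen and a change of variables --- is the direct generalization of the standard $W^{s,p}$ Poincar\'e computation and gives the constant explicitly. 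That is a real simplification and there is no hidden obstruction.

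However, the extension step is an unnecessary detour that introduces a genuine gap. You need $g:\R^n\to\R$ with $g|_\Omega=f$ and $|g|_{\dot B^s_{p,\infty}(\R^n)}\le 2\,|f|_{B^s_{p,\infty}(\Omega)}$, where on the right the paper (see \eqref{aa4}) means the \emph{intrinsic} difference semi-norm on $\Omega$, not the infimum over extensions. Producing such a $g$ amounts to a semi-norm-controlled extension theorem for a bounded domain, applied to a merely measurable $f$ that is not assumed to lie in $L^p$; that is a nontrivial fact, not something one can simply \enquote{pass to the infimum} for, and it is precisely the kind of equivalence that the inequality you are proving underlies. The same objection applies to your $q$-reduction, since it also goes through the infimum-over-extensions picture. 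The fix is to delete the extension entirely: with the convention in \eqref{ia1}, for $x,y\in\Omega$ and $h:=y-x$ one has $f(y)-f(x)=\Delta_h f(x)$, so the Jensen step gives directly
\bes
\left\|f-\fint_\Omega f\right\|_{L^p(\Omega)}^p\le\frac1{|\Omega|}\int_\Omega\int_\Omega|f(x)-f(y)|^p\,dy\,dx=\frac1{|\Omega|}\int_{|h|\le D}\|\Delta_h f\|_{L^p(\Omega)}^p\,dh,
\ees
and the intrinsic $q=\infty$ semi-norm from \eqref{aa4} bounds $\|\Delta_h f\|_{L^p(\Omega)}\le|h|^s\,|f|_{B^s_{p,\infty}(\Omega)}$ for every $h$, so the $h$-integral converges exactly as you claim. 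For the $q$-reduction, the intrinsic inequality $|f|_{B^s_{p,\infty}(\Omega)}\lesssim|f|_{B^s_{p,q}(\Omega)}$ also holds directly (near-monotonicity of $t\mapsto\sup_{|h|\le t}\|\Delta_h f\|_{L^p}$ gives $\sup_h|h|^{-s}\|\Delta_h f\|_{L^p}\lesssim\big(\int|h|^{-sq}\|\Delta_h f\|_{L^p}^q\frac{dh}{|h|^n}\big)^{1/q}$); this is what you should cite rather than the $\R^n$ Littlewood--Paley version. Finally, you should say a word about why $\fint_\Omega f$ is well defined: a priori $f$ is only measurable, but if $|f|_{B^s_{p,\infty}(\Omega)}<\infty$ the right-hand side above is finite, from which $f\in L^p(\Omega)$ follows (and then the Jensen step is legitimate a posteriori); the paper handles this subtlety by its truncation argument.
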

\noindent
Recall (Proposition \ref{p2.4}) that the
 semi-norm in \eqref{PBesov} is given by 
\be
\l{aa4}
|f|_{B^s_{p,q}}=|f|_{B^s_{p,q}(\R^n)}:=\left(\int_{\R^n}|h|^{-sq}\|\Delta_h f\|_{L^p}^q\, \frac{dh}{|h|^n}\right)^{1/q}
\ee
when $q<\infty$, 
with the obvious modifications when $q=\infty$ or $\R^n$ is replaced by $\Omega$. 
\begin{proof} 
%
By \eqref{homoglp}, we have $\|f\|_{B^s_{p,q}}\sim \|f\|_{L^p}+|f|_{B^s_{p,q}}$. 
Recall that the embedding $B^{s}_{p,q}\hookrightarrow L^p$ is compact \cite[Theorem 3.8.3, p. 296]{triebel1}. From this we infer that  \eqref{PBesov} holds for every function $f\in B^{s}_{p,q}$. Indeed, assume by contradiction that this is not the case. Then there exists a sequence of functions $(f_j)_{j\geq 1}\subset B^s_{p,q}$ such that, for every $j$,
\bes
1=\left\Vert f_j-\fint f_j\right\Vert_{L^p}\geq j \left\vert f_j\right\vert_{B^{s}_{p,q}}. \Bk
\ees
Set $g_j:={f_j-\fint f_j}$. 
Then, up to a subsequence, we have $g_j\to g$ in $L^p$, where $\|g\|_{L^p}=1$ and $\int g=0$. 
We claim that $g$ is constant in $\Omega$ (and thus $g=0$). Indeed, by the Fatou lemma, for every $h\in \R^n$ we have
\be
\l{aa3}
\|\Delta_hg\|_{L^p}\le \liminf \|\Delta_hg_j\|_{L^p}=\liminf \|\Delta_hf_j\|_{L^p}.
\ee
By \eqref{aa4}, \eqref{aa3} and the Fatou lemma, we have
\bes
|g|_{B^s_{p,q}}\le\liminf |g_j|_{B^s_{p,q}}=\liminf |f_j|_{B^s_{p,q}}=0;
\ees
thus $g=0$, as claimed. This contradicts the fact that $\|g\|_{L^p}=1$.

 Let us now establish \eqref{PBesov} only assuming that $|f|_{B^s_{p,q}}<\infty$. We start by reducing the case where $q=\infty$ to the case where $q<\infty$. This reduction relies on the straightforward estimate
 \bes
 |f|_{B^\sigma_{p,r}}\lesssim |f|_{B^s_{p,\infty}},\quad \fo 0<\sigma<s,\ \fo 0<r<\infty.
 \ees
So let us assume that $q<\infty$. 
%
%
For every integer $k\geq 1$, let $\Phi_k:\R\rightarrow \R$ be given by
\bes
\Phi_k(t):=
\begin{cases}
t, & \mbox{ if }\left\vert t\right\vert \leq k\\
-k, & \mbox{ if }t\leq -k\\
k, & \mbox{ if }t\geq k
\end{cases}.
\ees
Clearly, $\Phi_k$ is $1$-Lipschitz, so that \eqref{aa4} easily yields 
\begin{equation} \label{controlphik}
\left\vert \Phi_k(f)\right\vert_{B^s_{p,q}}\leq \left\vert f\right\vert_{B^s_{p,q}}
\end{equation}
and (by dominated convergence, using $q<\infty$  and \eqref{aa4})
\begin{equation} \label{convphikf}
\lim_{k\rightarrow \infty} \left\vert \Phi_k(f)-f\right\vert_{B^{s}_{p,q}}=0.
\end{equation}
Since $\Phi_k(f)\in L^{\infty}(\Omega)\subset L^p(\Omega)$, one has  $\Phi_k(f)\in B^s_{p,q}$ for every $k$. Therefore, \eqref{PBesov} and \eqref{controlphik} imply
\begin{equation} \label{phikck}
\left\Vert \Phi_k(f)-c_k\right\Vert_{L^p}\lesssim \left\vert \Phi_k(f)\right\vert_{B^{s}_{p,q}}\le \left\vert f\right\vert_{B^s_{p,q}}
\end{equation}
with $c_k:=\fint \Phi_k(f)$.  Thanks to \eqref{convphikf}, we may pick up an increasing sequence of integers $(\lambda_k)_{k\geq 1}$ such that, for every $k$,
$\d
\left\vert \Phi_{\lambda_{k+1}}(f)-\Phi_{\lambda_k}(f)\right\vert_{B^s_{p,q}}\leq 2^{-k}$.
Applying \eqref{PBesov} to $ \Phi_{\lambda_{k+1}}(f)-\Phi_{\lambda_k}(f)$, one therefore has
\bes
\left\Vert \left(\Phi_{\lambda_{k+1}}(f)-c_{\lambda_{k+1}}\right)-\left(\Phi_{\lambda_k}(f)-c_{\lambda_k}\right)\right\Vert_{L^p}\lesssim \left\vert \Phi_{\lambda_{k+1}}(f)-\Phi_{\lambda_k}(f)\right\vert_{B^s_{p,q}}\leq 2^{-k},
\ees
which entails that $\d  
\Phi_{\lambda_k}(f)-c_{\lambda_k}\to g\text{ in }L^p$  as $k\to\infty$.
Up to a subsequence, one can also assume that $
\d 
\Phi_{\lambda_k}(f)(x)-c_{\lambda_k}\to g(x)$ for a.e. $x\in \Omega$.
Take any $x\in \Omega$ such that $\Phi_{\lambda_k}(f)(x)-c_{\lambda_k}\to g(x)$. 
Since
$\d
\Phi_{\lambda_k}(f)(x)\to f(x)$ as $k\to\infty$,
one obtains
\begin{equation} \label{ckc}
\lim_{k\rightarrow \infty} c_{\lambda_k}=c\in \C.
\end{equation}
Finally, \eqref{phikck}, \eqref{ckc} and the Fatou lemma yield
$\d
\left\Vert  f-c\right\Vert_{L^p}\lesssim \left\vert f\right\vert_{B^s_{p,q}}$,
from which \eqref{PBesov} easily follows.
\end{proof}

We next state and prove a generalization of Lemma \ref{ad2}. 
\bl
\l{ad3}
Let $0<s<1$, $1\le p<\infty$, $1\le q\le\infty$, and $\delta\in (0,1]$. Define
\be
\l{ad4}
|f|_{B^s_{p,q,\delta}}:=\left(\int_{|h|\le\delta}|h|^{-sq}\|\Delta_h f\|_{L^p}^q\, \frac{dh}{|h|^n}\right)^{1/q}
\ee
when $q<\infty$, 
with the obvious modifications when $q=\infty$ or $\R^n$ is replaced by $\Omega$. Then we have
\begin{equation} \label{ad5}
\left\Vert f-\fint f\right\Vert_{L^p}\lesssim \left\vert f\right\vert_{B^{s}_{p,q, \delta}},\quad \fo f:\Omega\to\R\text{ measurable function}.
\end{equation}
\el
\bp
 Recall that \Bk $\|f\|_{B^s_{p,q}}\sim \|f\|_{L^p}+|f|_{B^s_{p,q,\delta}}$ (Proposition \ref{p2.4}). We continue as in the proof of Lemma \ref{ad2}.
\ep
We end with an estimate involving derivatives.
\bl
\l{at4}
Let $s>0$, $1< p<\infty$ and $1\le q\le\infty$. 
Let $f\in {\cal D}'(\Omega)$ be such that $\na f\in B^{s-1}_{p,q}(\Omega)$. Then $f\in B^s_{p, q}(\Omega)$ and
\be
\l{at9}
\left\| f-\fint f\right\|_{B^s_{p,q}}\lesssim \|\na f\|_{B^{s-1}_{p,q}}. 
\ee
\el
The above result is well-known, but we were unable to find it in the literature; for the convenience of the reader, we present the short argument when $\Omega=\T^n$. 
\bp
We use the notation in Proposition \ref{mm2} and  the following result \cite[Lemma 2.1.1, p. 16]{chemin}:  we have 
\be
\l{mm3}
\|f_j\|_{L^p}\sim 2^{-j}\|\na f_j\|_{L^p},\quad \fo 1\le p\le \infty,\ \fo j\ge 1.
\ee
By combining \eqref{mm3} with Proposition \ref{mm2}, we obtain, e.g. when $q<\infty$:
\be
\l{mm4}
\begin{aligned}
\left\|f-a_0\right\|_{B^s_{p,q}}^q&=\left\|\sum_{j\ge 1}f_j\right\|_{B^s_{p,q}}^q\sim \sum_{j\ge 1}2^{sjq}\|f_j\|_{L^p}^q
\\
&\lesssim \sum_{j\ge 1}2^{sjq}2^{-jq}\|\na f_j\|_{L^p}^q
\sim \|\na f\|_{B^{s-1}_{p,q}}^q.
\end{aligned}
\ee
In particular, $f\in L^1$ (Lemma \ref{kc2}), and thus $a_0=\fint f$. Therefore, \eqref{mm4} is equivalent to \eqref{at9}.
\ep
\br
\l{mn41}
With more work, Lemma \ref{at4} can be extended to the case where $p=1$. Although this will not be needed here, we sketch below the argument. With the notation   in Section \ref{mm6}, consider the Littlewood-Paley decomposition  $f=\sum f_j$, with $f_j:=\sum a_m\va_j(2\pi m)e^{2\im\pi m\cdot x}$. Note that
the Littlewood-Paley decomposition of $\na f$ is simply given by
\be
\l{mn7}
\na f=\sum \na f_j.
\ee
In the spirit of \cite[Lemma 2.1.1, p. 16]{chemin} (see also \cite[Proof of Lemma 1]{leta}), one may prove that we have the following analog of \eqref{mm3}:
\be
\l{mn6}
\|f_j\|_{L^p}\sim 2^{-j}\|\na f_j\|_{L^p},\quad \fo 1\le p\le \infty,\ \fo j\ge 1.
\ee
Using Definition \ref{periodicbesov}, \eqref{mn7} and \eqref{mn6}, we obtain \eqref{mm4}. We conclude as in the proof of Lemma \ref{at4}.
\er
\subsection{Characterization of $B^s_{p,q}$ via extensions} \label{characext}
${}$

The type of results we present in this section are classical for functions defined on the whole $\R^n$ and for the harmonic extension. Such results were obtained by Uspenski\u\i{ } in the early sixties \cite{uspenskii}. For further developments, see \cite[Section 2.12.2,  Theorem, p. 184]{triebel2};  see also  Section \ref{chha}. When the harmonic extension is replaced by other extensions by regularization, the kind of results we present below were known to experts at least for maps defined on $\R^n$; see  \cite[Section 10.1.1, Theorem 1, p. 512]{mazyanew} and also \cite{tracesoldnew} for a systematic treatment of extensions by smoothing.
The local variants (involving extensions by averages in domains) we present below  could be obtained by adapting the arguments we developed in  a more general setting in \cite{tracesoldnew}, and which are quite involved. However, we present here a more elementary approach, inspired by \cite{mazyanew}, sufficient to our purpose.
In what follows, we let $|\ |$ denote the $\|\ \|_{\infty}$ norm in $\R^n$.  

For simplicity, we state our results when $\Omega=\T^n$, but they can be easily adapted to arbitrary $\Omega$.

\bl
\l{ab1}
Let $0<s<1$, $1\le p<\infty$,  $1\le q\le\infty$, and $\delta\in (0,1]$. Set 
$
V_\delta:=\T^n\times (0,\delta)$.
\ben
\item
Let $F\in C^\infty(V_{\delta})$. If
\be
\l{cg6}
\left(\int_0^{\delta/2}\ve^{q-sq}\|(\na F)(\cdot,\ve)\|_{L^p}^q\, \frac{d\ve}\ve\right)^{1/q}<\infty
\ee
(with the obvious modification when $q=\infty$), then $F$ has a trace $f\in B^s_{p,q}(\T^n)$, satisfying
\be
\l{ab2}
|f|_{B^s_{p,q,\delta}}\lesssim \left(\int_0^{\delta/2}\ve^{q-sq}\|(\na F)(\cdot,\ve)\|_{L^p}^q\, \frac{d\ve}{\ve}\right)^{1/q}.
\ee
\item
Conversely, let $f\in B^s_{p,q}(\T^n)$. Let $\rho\in C^\infty$ be a mollifier supported in $\{ |x|\le 1\}$ and set $F(x,\ve):=f\ast\rho_\ve(x)$, $x\in\T^n$, $0<\ve<\delta$. Then
\be
\l{cg1}
 \left(\int_0^\delta\ve^{q-sq}\|(\na F)(\cdot,\ve)\|_{L^p}^q\, \frac{d\ve}{\ve}\right)^{1/q}\lesssim |f|_{B^s_{p,q,\delta}}.
\ee
\een
\el

A word about the existence of the trace in item 1 above. We will prove below that  for every $0<\lambda<\delta/4$ we have
\be
\l{cg2}
\left|F_{|\T^n\times\{\lambda\}}\right|_{B^s_{p,q}}\lesssim \left(\int_0^{\delta/2}\ve^{q-sq}\|(\na F)(\cdot,\ve)\|_{L^p}^q\, \frac{d\ve}{\ve}\right)^{1/q}.
\ee

By Lemma \ref{ad2} and a standard argument, this leads to the existence, in $B^s_{p,q}$, of the limit $\lim_{\ve\to 0}F(\cdot,\ve)$. This limit is the trace of $F$ on $\T^n$ and clearly satisfies \eqref{ab2}.

\bp
For simplicity, we treat only the case  where $q<\infty$; the case  where  $q=\infty$ is somewhat simpler and is left to the reader.

We claim that in item 1  we may assume that $F\in C^\infty(\overline{V_\delta})$. Indeed, assume  that \eqref{ab2} holds (with $\tr F=F(\cdot, 0)$) for such $F$. By Lemma \ref{ad2}, we have the stronger inequality $\left\|\tr F-\fint\tr F\right\|_{B^s_{p,q}}\lesssim I(F)$, where $I(F)$ is the integral in \eqref{cg6}. Then, by a standard approximation argument, we find that \eqref{ab2} holds for every $F$.

So let  $F\in C^\infty(\overline{V_\delta})$, and set  $f(x):=F(x,0)$, $\fo x\in\T^n$. Denote by $I(F)$ the quantity in \eqref{cg6}. We have to prove that $f$ satisfies 
\be
\l{ab210}
|f|_{B^s_{p,q}}\lesssim I(F).
\ee 
If $|h|\le\delta$, then 
\be
\l{cg4}
|\Delta_hf(x)|\le \left|f(x+h)-F(x+h/2,|h|/2)\right|+\left|f(x)-F(x+h/2,|h|/2)\right|.
\ee
By symmetry and \eqref{cg4}, the estimate \eqref{ab210} will follow from
\be
\l{cg5}
\left(\int_{|h|\le\delta}|h|^{-sq}\|f-F(\cdot+h/2,|h|/2)\|_{L^p}^q\, \frac{dh}{|h|^n}\right)^{1/q}\lesssim I(F).
\ee
In order to prove \eqref{cg5}, we start from
\be
\l{cg8}
\begin{aligned}
\left|F(x+h/2,|h|/2)-f(x)\right|&=\left|\int_0^1 (\na F)(x+th/2,t|h|/2)\cdot (h/2,|h|/2)\, dt\right|\\
&\le |h|\int_0^1|\na F(x+th/2,t|h|/2)|\, dt.
\end{aligned}
\ee
Let $J(F)$ denote the left-hand side of \eqref{cg5}. Using \eqref{cg8} and setting $r:=|h|/2$, we obtain
\be
\l{ch1}
\begin{aligned}
[J(F)]^q&\le \int_{|h|\le\delta}|h|^{q-sq}\left(\int_0^1\|\na F(\cdot+th/2,t|h|/2)\|_{L^p}\, dt\right)^q\, \frac{dh}{|h|^n}\\
&=\int_{|h|\le\delta}|h|^{q-sq}\left(\int_0^1\|\na F(\cdot,t|h|/2)\|_{L^p}\, dt\right)^q\, \frac{dh}{|h|^n}\\
&\sim \int_0^{\delta/2}r^{q-sq-1}\left(\int_0^1\|\na F(\cdot,tr)\|_{L^p}\, dt\right)^q\, dr\\
&\sim \int_0^{\delta/2}r^{-sq-1}\left(\int_0^r\|\na F(\cdot,\sigma)\|_{L^p}\, d\sigma\right)^q\, dr
\lesssim [I(F)]^q.
\end{aligned}
\ee
The last inequality is a special case of  Hardy's inequality \cite[Chapter 5, Lemma 3.14]{steinweiss}, that we recall here when $\delta =\infty$.\footnote{ But the argument adapts to a finite $\delta$; see e.g. \cite[Proof of Corollary 7.2]{bousquetmironescu}.} Let $1\le q<\infty$ and $1<\rho<\infty$. If $G\in W^{1,1}_{loc}([0,\infty))$, then
\be
\l{e04269}
\int_0^\infty\frac{|G(r)-G(0)|^q}{r^\rho}\,dr\leq \left(\frac{q}{\rho-1}\right)^q\int_{0}^\infty\frac{|G'(r)|^q}{r^{\rho-q}}\,dr.
\ee 
We obtain \eqref{ch1} by applying \eqref{e04269} with $G'(r):=\|\na F(\cdot, r)\|_{L^p}$ and $\rho:=sq+1$.
The proof of item 1 is complete.

We next turn to item 2. We have 
\be
\l{kh2}
 \na F(x,\ve)=\frac 1\ve f\ast \eta_\ve(x),
 \ee 
 where $\na$ stands for $(\p_1,\ldots,\p_n,\p_{\ve})$. Here,    $\eta=(\eta^1,\ldots,\eta^{n+1})\in C^\infty(\T^n ; \R^{n+1})$ is supported in $\{ |x|\le 1\}$ and is given in coordinates by 
 \be
 \l{kh3}
 \eta^j=\p_j\rho, \ \fo j\in \llbracket 1, n\rrbracket,\ \eta^{n+1}=-\div (x\rho).
 \ee 
 Noting that $\int \eta=0$, we find that
\be
\l{ch2}
\begin{aligned}
|\na F(x,\ve)|&= \frac 1\ve\left|\int_{|y|\le \ve}(f(x-y)-f(x))\eta_\ve(y)\, dy\right|\\
&\lesssim\frac 1{\ve^{n+1}}\int_{|h|\le\ve}|f(x+h)-f(x)|\, dh.
\end{aligned}
\ee
Integrating \eqref{ch2}  and using Minkowski's inequality, we obtain
\be
\l{ci1}
\|\na F(\cdot,\ve)\|_{L^p}\lesssim \frac 1{\ve^{n+1}}\int_{|h|\le\ve}\|\Delta_hf\|_{L^p}\, dh.
\ee
Let $L(F)$ be the quantity in  the left-hand side of  \eqref{cg1}. Combining \eqref{ci1} with 
H\" older's inequality, we find that
\be
\l{mj1}
\begin{aligned}
 [L(F)]^q&\Bk \lesssim \int_0^{\delta}\frac 1{\ve^{nq+sq+1}}\left(\int_{|h|\le\ve}\|\Delta_hf\|_{L^p}\, dh\right)^q\, d\ve\\
&\lesssim \int_0^{\delta}\frac 1{\ve^{nq+sq+1}}\ve^{n(q-1)}\int_{|h|\le\ve}\|\Delta_hf\|_{L^p}^q\, dh\, d\ve\\
&\lesssim \int_{|h|\le\delta}|h|^{-sq}\|\Delta_hf\|_{L^p}^q\, \frac{dh}{|h|^n}=|f|_{B^s_{p,q,\delta}}^q,\end{aligned}
\ee
i.e, \eqref{cg1} holds.
\ep
 In the same vein, we have the following result, involving the semi-norm appearing in Proposition \ref{p2.4}, more specifically the quantity
  \be
\l{kd4}
|f|_{B^1_{p,q,\delta}}:=\left(\int_{|h|\le\delta}|h|^{-q}\|\Delta_h^2 f\|_{L^p}^q\, \frac{dh}{|h|^n}\right)^{1/q}
\ee
when $q<\infty$, 
with the obvious modification when $q=\infty$. 
We first introduce a notation. Given $F\in C^2(V_\delta)$, we let $D^2_\#F$ denote the collection of the second order derivatives of $F$ which are either completely horizontal (that is of the form $\p_j\p_k F$, with $j,k\in\llbracket 1,n\rrbracket$), or completely vertical (that is $\p_{n+1}\p_{n+1}F$).   
\bl
\l{kb2}
Let $1\le p<\infty$ and $1\le q\le\infty$. 
Let $F\in C^\infty(V_{\delta})$ and set 
\bes
M(F):=\left(\int_0^{\delta}\ve^{q}\|(\na F)(\cdot,\ve)\|_{L^{2p}}^{2q}\, \frac{d\ve}\ve\right)^{1/q}
\ees
and
\bes
N(F):=\left(\int_0^{\delta}\ve^{q}\left\|(D^2_\# F)(\cdot,\ve)\right\|_{L^p}^q\frac{d\ve}\ve\right)^{1/q}
\ees
(with the obvious modification when $q=\infty$).
\ben
\item
If $M(F)<\infty$ and $N(F)<\infty$,  then $F$ has a trace $f\in B^1_{p,q}(\T^n)$, satisfying
\be
\l{kf1}
\left\|f-\fint f\right\|_{L^p}\lesssim  M(F)^{\frac 12}\ee
and
\be
\l{kb3}
|f|_{B^1_{p,q,\delta}}\lesssim N(F).
\ee
\item
Conversely, let $f\in B^1_{p,q}(\T^n ; \so)$. Let $\rho\in C^\infty$ be an even mollifier supported in $\{ |x|\le 1\}$ and set $F(x,\ve):=f\ast\rho_\ve(x)$, $x\in\T^n$, $0<\ve<\delta$. Then
\be
\l{kb4}
 M(F)+N(F)\lesssim |f|_{B^1_{p,q,\delta}}.
\ee
\een
\el
The above result is inspired by the proof of \cite[ Section 10.1.1, Theorem 1, p. 512]{mazyanew}. 
The arguments we present also  lead to a (slightly different) proof of Lemma \ref{ab1}.

We start by establishing some preliminary estimates. 
We call $H\in\R^n\times\R$ \enquote{pure} if $H$ is either horizontal, or vertical, i.e., either $H\in\R^n\times\{0\}$ or $H\in \{0\}\times\R$. For further use, let us note the following fact, valid for $X\in V_\delta$ and $H\in\R^{n+1}$. 
\be
\l{ja2}H\text{ pure}\implies|D^2F(X)\cdot(H,H)|\lesssim |D^2_\#F(X)||H|^2. 
\ee
\bl
\l{jc1}
Let $X, H$ be such that $[X, X+2H]\subset \overline{V_\delta}$. Let $F\in C^2(\overline{V_\delta})$.  Then 
\be
\l{jc2}
|\Delta_H^2F(X)|\le \int_0^2 \tau |D^2F(X+\tau H)\cdot(H,H)|\, d\tau.
\ee
In particular, if $H$ is pure and we write $H=|H|K$, then 
\be
\l{jc3}
|\Delta_H^2F(X)|\lesssim \int_0^{2|H|} t |D^2_\# F(X+tK)|\, dt.
\ee
\el
\bp
Set 
\bes
G(s):=F(X+(1-s)H)+F(X+(1+s)H), \ s\in [0,1],
\ees 
so that $G\in C^2$ and in addition we have
%
\be
\l{ja3}
G'(0)=0,\ G''(s)=[D^2F(X+(1-s)H)+D^2F(X+(1+s)H)]\cdot(H,H),
\ee 
and
\be
\l{ja4}
\int_0^1(1-s)G''(s)\, ds=G(1)-G(0)-G'(0)=\Delta_H^2F(X).
\ee
Estimate \eqref{jc2} is a consequence of \eqref{ja3} and \eqref{ja4} (using the changes of variable $\tau:=1\pm s$). In the special case where $H$ is pure, we rely on \eqref{ja2} and \eqref{jc2} and obtain \eqref{jc3} via the change of variable  $t:=\tau|H|$. 
\ep
If we combine \eqref{jc3} (applied first with $H=(h,0)$, $h\in\R^n$, next with $H=(0,t)$, $t\in [0,\delta/2]$) with Minkowski's inequality, we obtain the two following consequences\footnote{ In \eqref{jb1}, we let $\Delta_h^2F(\cdot,\ve):=F(\cdot+2h,\ve)-2F(\cdot+h,\ve)+F(\cdot,\ve)$.}
\be
\l{jb1}
[h\in\R^n,\ 0\le\ve\le\delta]\implies \|\Delta_h^2F(\cdot,\ve)\|_{L^p}\lesssim |h|^2\|D^2_\#F(\cdot,\ve)\|_{L^p},
\ee
and\footnote{ With the slight abuse of notation $\Delta_{te_{n+1}}^2F(\cdot,\ve):=F(\cdot,\ve+2t)-2F(\cdot,\ve+t)+F(\cdot,\ve)$.}
\be
\l{jb2}
\begin{aligned}
[t, \ve\ge 0,\ \ve+2t\le\delta]
\implies \|\Delta_{te_{n+1}}^2F(\cdot,\ve)\|_{L^p}&\lesssim \int_0^{2t} r \|D^2_\#F(\cdot,\ve+r)\|_{L^p}\, dr.
\end{aligned}
\ee
\bp[Proof of Lemma \ref{kb2}]
We start by proving \eqref{kf1}. By Lemma \ref{ab1} (applied with $s=1/2$ and with $2p$ (respectively $2q$) instead of $p$ (respectively $q$)), $F$ has, on $\T^n$, a trace $\tr F\in  B^{1/2}_{2p,2q}$. By  Lemma \ref{ab1}, item $1$, and Lemma \ref{ad3},   we have 
\bes  
\left\|\tr F-\fint \tr F\right\|_{L^p}\lesssim\left\|\tr F-\fint \tr F\right\|_{L^{2p}}\lesssim   M(F)^{1/2}
\ees
i.e., \eqref{kf1} holds.

We next establish \eqref{kb3}. Arguing 
as at the beginning of the proof of Lemma \ref{ab1},  one concludes  that it suffices to prove \eqref{kb3} when $F\in C^\infty(\overline{V_\delta})$. 

So let us consider some $F\in C^\infty(\overline{V_\delta})$. We set $f(x)=F(x,0)$, $\fo x\in\T^n$. 
Then \eqref{kb3} is equivalent to 
\be
\l{kf2}
|f|_{B^1_{p,q,\delta}}\lesssim N(F).
\ee
We treat only the case where $q<\infty$; the case where $q=\infty$ is slightly simpler and is left to the reader. 

The starting point is the following identity, valid when $|h|\le\delta$ and with $t:=|h|$
\be
\l{jd1}
\begin{aligned}
\Delta_h^2f=&\Delta_{te_{n+1}/2}^2F(\cdot+2h,0)-2\Delta_{te_{n+1}/2}^2F(\cdot+h,0)+\Delta_{te_{n+1}/2}^2F(\cdot,0)\\
&+2\Delta_h^2F(\cdot, t/2)-\Delta_h^2F(\cdot, t).
\end{aligned}
\ee
By \eqref{jb1}, \eqref{jb2} and \eqref{jd1}, we find that
\be
\l{jd2}
\begin{aligned}
\|\Delta_h^2f\|_{L^p}\lesssim & \int_0^{|h|}r\|D^2_\#F(\cdot, r)\|_{L^p}\, dr+|h|^2\|D^2_\#F(\cdot, |h|/2)\|_{L^p}\\
&+|h|^2\|D^2_\#F(\cdot, |h|)\|_{L^p}.
\end{aligned}
\ee
Finally, \eqref{jd2} combined with Hardy's inequality \eqref{e04269} (applied to the integral $\int_0^\delta$ and  with $G'(r):=r\|D^2_\#F(\cdot, r)\|_{L^p}$ and $\rho:=q+1$) yields
%
\be
\l{kf6}
\begin{aligned}
|f|_{B^1_{p,q,\delta}}^q&\lesssim \int_{|h|\le\delta}\frac 1{|h|^q}\left(\int_0^{|h|}r \left\|
D^2_\#F(\cdot, r)\right\|_{L^p}\, dr\right)^q\, \frac{dh}{|h|^n}+[N(F)]^q\\
&\lesssim [N(F)]^q.
\end{aligned}
\ee
This implies \eqref{kf2} and completes the proof of item 1.

We now turn to item 2. We claim that 
\be
\l{kg1}
|f|_{B^{1/2}_{2p,2q,\delta}}\lesssim |f|_{B^1_{p,q,\delta}}^{1/2}.
\ee
Indeed, it suffices to note the fact that 
$ |\Delta_h^2f|^{2p}\lesssim  |\Delta_h^2f|^p$ (since $|f|=1$). 
By combining \eqref{kg1} with 
Lemma \ref{ab1}, we find that
\be
\l{kg2}
M(F)=\left(\int_0^{\delta}\ve^{q}\|(\na F)(\cdot,\ve)\|_{L^{2p}}^{2q}\, \frac{d\ve}\ve\right)^{1/q}\lesssim |f|_{B^1_{p,q,\delta}}.
\ee
Thus, in order to complete the proof of \eqref{kb4}, it suffices to combine \eqref{kg2} with the following estimate
\be
\l{kg3}
N(F)
\lesssim |f|_{B^1_{p,q,\delta}},
\ee
that we now establish. The key argument for proving \eqref{kg3} is the following second order analog of \eqref{ch2}:
\be
\l{ki1}
|D^2_\#F(x,\ve)|\lesssim \frac 1{\ve^{n+2}}\int_{|h|\le\ve}|\Delta_h^2f(x-h)|\, dh.
\ee
The proof of \eqref{ki1} appears in \cite[p. 514]{mazyanew}. For the sake of completeness, we reproduce below the argument. First, 
differentiating the expression defining $F$,
we have
\be
\l{ki2}
\p_j\p_kF(x,\ve)=\frac 1{\ve^2}f\ast(\p_j\p_k\rho)_\ve,\ \fo j,\, k\in\llbracket 1,n\rrbracket.
\ee
Using \eqref{ki2} and the fact that $\p_j\p_k\rho$ is even and has zero average, we obtain the identity
\bes
\p_j\p_kF(x,\ve)=\frac 1{2\ve^{n+2}}\int_{|h|\le\ve}\p_j\p_k\rho(h/\ve)\Delta_h^2f(x-h)\, dh,
\ees
and thus \eqref{ki1} holds for the derivatives $\p_j\p_kF$, with  $j,\, k\in\llbracket 1,n\rrbracket$.

We next note the identity
\be
\l{ki4}
F(x,\ve)=\frac 1{2\ve^n}\int \rho(h/\ve)\Delta_h^2f(x-h)\, dh+f(x),
\ee
which follows from the fact that $\rho$ is even.

By differentiating twice \eqref{ki4} with respect to $\ve$, we obtain that \eqref{ki1} holds when $j=k=n+1$. The proof of \eqref{ki1} is complete.

Using \eqref{ki1} and Minkowski's inequality, we obtain
\be
\l{mi1}
\|D^2_\#F(\cdot,\ve)\|_{L^p}\lesssim \frac 1{\ve^{n+2}}\int_{|h|\le\ve}\|\Delta_h^2f\|_{L^p}\, dh,
\ee
which is a second order analog of \eqref{ci1}. Once \eqref{ci1} is obtained, we repeat the calculation leading to 
\eqref{mj1} and obtain \eqref{kg3}. The details are left to the reader.

The proof of Lemma \ref{kb2} is complete.
\ep
\br
\l{av1}
One may put Lemmas \ref{ab1} and \ref{kb2}  in the perspective of the theory of weighted Sobolev spaces. Let us start by recalling one of the striking achievements of this theory. As it is well-known, we have $\tr W^{1,1}(\R^n_+)=L^1(\R^{n-1})$, and, when $n\ge 2$, the trace operator has no linear continuous right-inverse $T:L^1(\R^{n-1})\to W^{1,1}(\R^n)$ \cite{gagliardo}, \cite{peetre}. The expected analogs of these facts for $W^{2,1}(\R^n_+)$ are both wrong. More specifically, we have $\tr W^{2,1}(\R^n_+)=B^1_{1,1}(\R^{n-1})$ (which is a strict subspace of $W^{1,1}(\R^{n-1}))$, and the trace operator has a linear continuous right inverse from $B^1_{1,1}(\R^{n-1})$ into $W^{2,1}(\R^n_+)$. These results are special cases of the trace theory for weighted Sobolev spaces developed by Uspenski\u\i {} \cite{uspenskii}. For a modern treatment of this theory, see e.g. \cite{tracesoldnew}.
\er
%
\subsection{Product estimates}
${}$

Lemma \ref{at3} below is a variant of \cite[Lemma D.2]{lss}. Here, $\Omega$ is either smooth bounded, or $(0, 1)^n$, or $\T^n$.

\bl
\l{at3} Let $s>1$, $1\le p<\infty$ and $1\le q\le \infty$. If $u, v\in B^s_{p,q}\cap L^\infty(\Omega)$, then $u\na v\in B^{s-1}_{p,q}$.
\el

\bp
After extension to $\R^n$ and cutoff, 
 we may assume that $u, v\in B^s_{p,q}\cap L^\infty$. It thus suffices to prove that $u, v\in B^s_{p,q}\cap L^\infty(\R^n)$$\implies$$u\na v\in B^{s-1}_{p,q}(\R^n)$. 

In order to prove the above, we argue as follows. Let $u=\sum u_j$ and $v=\sum v_j$ be the Littlewood-Paley decompositions of $u$ and $v$. Set 
\bes
f^j:=\sum_{k\le j}u_k\na v_j+\sum_{k<j}u_j\na v_k.
\ees 
Since $\supp{\mathcal F} (u_k\na v_j)\subset B(0, 2^{\max\{ k, j\}+2})$, we find that $u\na v=\sum f^j$ is a Nikolski\u\i {} decomposition of $u\na v$; see Section \ref{mm8}. Assume e.g. that $q<\infty$. In view of Proposition \ref{mm9}, the conclusion of Lemma \ref{at3} follows if we prove that
\be
\l{mn1}
\sum 2^{(s-1)jq}\|f^j\|_{L^p}^q<\infty.
\ee
In order to prove \eqref{mn1}, we rely on the elementary estimates \cite[Lemma 2.1.1, p. 16]{chemin}, \cite[formulas (D.8), (D.9), p. 71]{lss} 
\be
\l{mn2}
\left\|\sum_{k\le j}u_k\right\|_{L^\infty}\lesssim \|u\|_{L^\infty}, \quad \fo j\ge 0,
\ee
\be
\l{mn3}
\left\| \sum_{k< j}\na v_k\right\|_{L^\infty}\lesssim 2^j\|v\|_{L^\infty}, \quad \fo j\ge 0,
\ee
and
\be
\l{mn4}
\|\na v_j\|_{L^p}\lesssim 2^j\|v_j\|_{L^p},\quad \fo j\ge 0.
\ee
By combining \eqref{mn2}-\eqref{mn4}, we obtain
\bes
\begin{aligned}
\sum 2^{(s-1)jq}\|f^j\|_{L^p}^q&\lesssim \sum 2^{(s-1)jq}\left(\left\|\sum_{k\le j}u_k\right\|_{L^\infty}^q\|\na v_j\|_{L^p}^q+\left\|\sum_{k<j}\na v_k\right\|_{L^\infty}^q\| u_j\|_{L^p}^q\right)\\
&\lesssim \|u\|_{L^\infty}^q\sum 2^{sjq}\|v_j\|_{L^p}^q+\|v\|_{L^\infty}^q\sum 2^{sjq}\|u_j\|_{L^p}^q
\\
&\lesssim \|u\|_{L^\infty}^q\|v\|_{B^s_{p,q}}^q+\|v\|_{L^\infty}^q\|u\|_{B^s_{p,q}}^q,
\end{aligned}
\ees
and thus \eqref{mn1} holds.
\ep

\subsection{Superposition   operators}
${}$

In this section, we examine the mapping properties of the operator
\bes
T_\Phi,\  \psi\xmapsto{T_\Phi} \Phi\circ\psi.
\ees

We work in $\Omega$ smooth bounded, or $(0,1)^n$, or $\T^n$.

The next result is classical and straightforward; see e.g. \cite[Section 5.3.6, Theorem 1]{runstsickel}.
 \begin{lemma} \label{eipsi}
 Let $0<s<1$,  $1\le p<\infty$, and $1\le q<\infty$. Let $\Phi:\R^k\to\R^l$ be a Lipschitz function
 . 
Then  $
T_\Phi$    
maps $B^{s}_{p,q}(\Omega ;  \R^k)$ into $B^{s}_{p,q}(\Omega ;  \R^l)$.

Special case:   $\psi\mapsto e^{{\im} \psi}$  maps $B^s_{p,q}(\Omega ; \R)$ into $B^s_{p,q}(\Omega ; \so)$.

In addition, when $q<\infty$, $T_\Phi$ is continuous. 
 \end{lemma}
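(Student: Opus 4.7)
The proof splits naturally into the mapping property and the continuity statement, with the former being elementary and the latter requiring a Vitali-type argument. The backbone of both parts is the finite-difference characterization given in Proposition \ref{p2.4} with $M=1$ (admissible since $0<s<1$):
\[
\|f\|_{B^s_{p,q}(\Omega)} \sim \|f\|_{L^p(\Omega)} + \left(\int_{|h|\le\delta}|h|^{-sq}\,\|\Delta_h f\|_{L^p(\Omega)}^q\,\frac{dh}{|h|^n}\right)^{1/q},
\]
with the usual modification when $q=\infty$.

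For the mapping property, let $L$ denote a Lipschitz constant for $\Phi$. Then pointwise we have $|\Delta_h(T_\Phi\psi)(x)| = |\Phi(\psi(x+h))-\Phi(\psi(x))| \le L\,|\Delta_h\psi(x)|$, so $\|\Delta_h(T_\Phi\psi)\|_{L^p}\le L\,\|\Delta_h\psi\|_{L^p}$ for every $h$, and integration gives $|T_\Phi\psi|_{B^s_{p,q}}\le L\,|\psi|_{B^s_{p,q}}$. For the $L^p$ part, the crude bound $|T_\Phi\psi(x)|\le|\Phi(0)| + L\,|\psi(x)|$ combined with the boundedness of $\Omega$ yields $\|T_\Phi\psi\|_{L^p(\Omega)} \lesssim 1+\|\psi\|_{L^p(\Omega)}$. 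This proves $T_\Phi\colon B^s_{p,q}(\Omega;\R^k)\to B^s_{p,q}(\Omega;\R^l)$.

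For the continuity statement, suppose $\psi_n\to\psi$ in $B^s_{p,q}$ with $q<\infty$. The $L^p$ convergence of $T_\Phi\psi_n$ to $T_\Phi\psi$ is immediate from $|T_\Phi\psi_n-T_\Phi\psi|\le L\,|\psi_n-\psi|$. For the seminorm part, I set $G_n(h):=\|\Delta_h(T_\Phi\psi_n-T_\Phi\psi)\|_{L^p}^q\,|h|^{-sq-n}$ and aim to show $\int_{|h|\le\delta}G_n\,dh\to 0$ by Vitali's convergence theorem. Pointwise convergence $G_n(h)\to 0$ for each fixed $h$ follows from extracting a subsequence with $\psi_n\to\psi$ a.e., whence $T_\Phi\psi_n\to T_\Phi\psi$ a.e., combined with dominated convergence in $L^p$ (dominated by $L|\psi_n-\psi|+L|\psi_n(\cdot+h)-\psi(\cdot+h)|$, which tends to $0$ in $L^p$). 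For equiintegrability in $h$, I combine $G_n(h)\lesssim L^q\,(\|\Delta_h\psi_n\|_{L^p}^q+\|\Delta_h\psi\|_{L^p}^q)\,|h|^{-sq-n}$ with the triangle inequality $\|\Delta_h\psi_n\|_{L^p}\le\|\Delta_h\psi\|_{L^p}+\|\Delta_h(\psi_n-\psi)\|_{L^p}$; since the map $A\mapsto\int_A\|\Delta_h\psi\|_{L^p}^q\,|h|^{-sq-n}\,dh$ is absolutely continuous and since $\int\|\Delta_h(\psi_n-\psi)\|_{L^p}^q\,|h|^{-sq-n}\,dh\le|\psi_n-\psi|_{B^s_{p,q}}^q\to 0$, the absolute continuity transfers uniformly to the sequence.

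The main obstacle is that $T_\Phi$ is \emph{not} Lipschitz in the $B^s_{p,q}$-seminorm: there is no pointwise dominant for $G_n$ uniform in $n$, and one cannot simply bound $\|\Delta_h(T_\Phi\psi_n-T_\Phi\psi)\|_{L^p}$ by $L\,\|\Delta_h(\psi_n-\psi)\|_{L^p}$ (unless $\Phi$ is affine). Hence continuity cannot be reduced to the Lipschitz continuity of $T_\Phi$, and the equiintegrability step is unavoidable; it is also precisely this step that breaks down when $q=\infty$, which accounts for the restriction in the statement. The special case $\psi\mapsto e^{\im\psi}$ then follows immediately from the Lipschitz property of $t\mapsto e^{\im t}$ on $\R$, together with the observation that $|e^{\im\psi}|=1$ keeps the result $\mathbb S^1$-valued.
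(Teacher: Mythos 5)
Your proof is correct in substance, but let me note at the outset that the paper does not actually prove this lemma: it is stated as ``classical and straightforward'' with a citation to Runst--Sickel, so there is no ``paper's proof'' to compare against. What you have reconstructed is essentially the standard textbook argument, built on the first-difference (\(M=1\)) characterization from Proposition~\ref{p2.4}, and it is a perfectly good self-contained substitute for the citation. The mapping bound via \(|\Delta_h(T_\Phi\psi)|\le L|\Delta_h\psi|\) and the crude \(L^p\) estimate on a bounded domain are exactly right, and the Vitali/uniform-integrability argument for continuity is the correct way to compensate for the failure of \(T_\Phi\) to be Lipschitz with respect to the Besov seminorm; your observation about where this breaks when \(q=\infty\) is also on target.

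One small simplification is available in the continuity step: you do not need to pass to an a.e.\ convergent subsequence and invoke dominated convergence to get the pointwise limit \(G_n(h)\to 0\). Since \(\Phi\) is \(L\)-Lipschitz, \(\|T_\Phi\psi_n-T_\Phi\psi\|_{L^p}\le L\|\psi_n-\psi\|_{L^p}\to 0\), hence for each fixed \(h\) the trivial bound \(\|\Delta_h f\|_{L^p}\le 2\|f\|_{L^p}\) already gives \(\|\Delta_h(T_\Phi\psi_n-T_\Phi\psi)\|_{L^p}\to 0\), so \(G_n(h)\to 0\) pointwise with no measure-theoretic detour. (Your parenthetical about ``dominated by \(L|\psi_n-\psi|+\cdots\) which tends to \(0\) in \(L^p\)'' is not a fixed dominant and so is not literally a dominated-convergence argument; the direct Lipschitz bound sidesteps this entirely.) The rest of the Vitali step---the decomposition of \(\|\Delta_h\psi_n\|_{L^p}^q\), the absolute continuity of \(A\mapsto\int_A\|\Delta_h\psi\|_{L^p}^q|h|^{-sq-n}dh\), and the vanishing of the error term because \(|\psi_n-\psi|_{B^s_{p,q}}\to 0\)---is carried out correctly, and the finitely many small indices are handled by the usual shrinking of \(\eta\). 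Your reduction of the \(\mathbb S^1\)-valued special case to the Lipschitz property of \(t\mapsto e^{\im t}\) is fine as well.
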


For the next result, see \cite[Section 5.3.4, Theorem 2, p. 325]{runstsickel}. 
\bl
\l{ka2}
Let $s>0$, $1\le p<\infty$ and $1\le q\le\infty$. Let $\Phi\in C^\infty(\R^k ; \R^l)$. Then $T_\Phi$  maps $(B^{s}_{p,q}\cap L^\infty)(\Omega ;  \R^k)$ into $(B^{s}_{p,q}\cap L^\infty)(\Omega ;  \R^l)$.

Special case:   $\psi\mapsto e^{{\im} \psi}$ maps $(B^{s}_{p,q}\cap L^\infty)(\Omega ; \R)$ into $(B^{s}_{p,q}\cap L^\infty)(\Omega ; \so)$.
\el

\subsection{Integer valued functions}
${}$

The next result is a cousin of  \cite[Appendix B]{lss},\footnote{ The context there is the one of the Sobolev spaces.} but the argument in \cite{lss} does not seem to apply in our situation. Lemma \ref{Eunicite} can be obtained from the results in \cite{bbmuni}, but we present below a simpler direct argument. 
 \begin{lemma}\label{Eunicite}Let $s>0$, $1\le p<\infty$ and $1\le q<\infty$ be such that $sp\ge 1$. Then the functions in $B^{s}_{p,q}(\Omega ;\mathbb{Z})$ are constant. 
 
Same result when $s>0$, $1\le p<\infty$, $q=\infty$ and $sp>1$.
 
 The same conclusion holds for functions in $\sum_{j=1}^k B^{s_j}_{p_j,q_j}(\Omega ; \mathbb{Z})$, provided we have for all  $j\in\llbracket 1,k\rrbracket$:  either $s_jp_j=1$ and $1\le q_j<\infty$, or $s_jp_j>1$ and $1\le q_j\le\infty$.
 \end{lemma}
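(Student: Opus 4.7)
My plan is to reduce Lemma \ref{Eunicite} to a one-dimensional statement via slicing, and to solve the 1D case by embedding into $\VMO$.

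\textbf{One-dimensional case.} When $n = 1$, each triple $(s_j, p_j, q_j)$ falls in one of two regimes covered by the embeddings already proved in this section: either $s_j p_j = 1$ with $q_j < \infty$, in which case Lemma \ref{B-VMO} (with $n=1$) gives $B^{s_j}_{p_j, q_j}(\R) \hookrightarrow \VMO(\R)$, or $s_j p_j > 1$, in which case Lemma \ref{ka1} yields continuous embedding into $C^0$, hence into $\VMO$. Linearity of $\VMO$ gives $f = \sum_j f_j \in \VMO(\Omega)$. I then invoke the classical fact that an integer-valued $\VMO$ function on a connected open set is constant: the vanishing mean oscillation forces the averages $f_{B(x, r)}$ to converge, as $r \to 0$, to a continuous limit $\tilde f(x)$; Lebesgue differentiation identifies $\tilde f = f \in \Z$ almost everywhere; hence $\tilde f$ is a continuous integer-valued function on a connected open set and must be constant.

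\textbf{Slicing under $q_j \le p_j$.} For $n \geq 2$ under the auxiliary hypothesis $q_j \leq p_j$ for every $j$, Proposition \ref{qh1} provides, in every coordinate direction $\ell$ and for almost every $x_\alpha \in \R^{n-1}$, that the partial map $f_j(x_\alpha, \cdot)$ belongs to $B^{s_j}_{p_j, q_j}(\R)$. Integer-valuedness of $f$ descends to every slice, so the 1D case forces $f(x_\alpha, \cdot)$ to be constant in the $\ell$-th variable, i.e., $\p_\ell f = 0$ distributionally. Varying $\ell$ yields $\nabla f = 0$, whence $f$ is constant on the connected $\Omega$.

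\textbf{Embedding detour when $q_j > p_j$.} Proposition \ref{l7.26} warns that direct slicing of $f_j$ may fail here, so I first embed each $f_j$ into a space $B^{\tilde s_j}_{\tilde p_j, \tilde p_j}(\Omega)$ still satisfying the hypotheses, reducing to the previous paragraph. When $s_j p_j > 1$ strictly, Lemma \ref{Besovemb} items 1 and 3 yield $B^{s_j}_{p_j, q_j} \hookrightarrow B^{s_j}_{p_j, \infty} \hookrightarrow B^{\tilde s_j}_{p_j, p_j}$ for any $\tilde s_j < s_j$, chosen close enough to $s_j$ that $\tilde s_j p_j > 1$. At the critical threshold $s_j p_j = 1$ with $p_j < q_j < \infty$ (which forces $n \geq 2$), one extends $f_j$ to $\R^n$ with compact support (using a cutoff) and applies Lemma \ref{Besovemb} item 3 with $\tilde p_j < p_j$ and $\tilde s_j := 1/\tilde p_j$: the required strict inequality $s_j - n/p_j > \tilde s_j - n/\tilde p_j$ reduces to $(1-n)/p_j > (1-n)/\tilde p_j$, equivalently (dividing by $1-n<0$) to $\tilde p_j < p_j$, and we land on the critical line $\tilde s_j \tilde p_j = 1$ with $\tilde q_j = \tilde p_j < \infty$. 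Restriction to $\Omega$ brings us back to the regime of the previous paragraph.

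I expect the principal difficulty to be precisely this last case: the non-restriction property of Proposition \ref{l7.26} forbids direct slicing when $q_j > p_j$, and the embedding detour is workable at the critical threshold $sp=1$ only because the sign flip $1-n<0$ (requiring $n \geq 2$) turns the strict Sobolev inequality into a valid constraint $\tilde p_j < p_j$; the 1D step itself relies on the classical, but not entirely trivial, fact that integer-valued $\VMO$ functions on a connected open set are constant.
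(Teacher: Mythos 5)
Your approach for the sub-critical cases is sound, and in fact it offers a genuine alternative route for a large part of the statement: the $\VMO$ argument in one dimension is the same as the paper's, and for $q_j\le p_j$ the slicing via Proposition \ref{qh1} is a legitimate substitute for the paper's Lemma \ref{ad1}. For $q_j>p_j$ with $s_jp_j>1$ strictly, your embedding $B^{s_j}_{p_j,q_j}\hookrightarrow B^{s_j}_{p_j,\infty}\hookrightarrow B^{\tilde s_j}_{p_j,p_j}$ with $\tilde s_j<s_j$ close to $s_j$ is correct.

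The gap is exactly where you predicted difficulty: the critical threshold $s_jp_j=1$ with $p_j<q_j<\infty$. Your proposed embedding $B^{1/p_j}_{p_j,q_j}(\Omega)\hookrightarrow B^{1/\tilde p_j}_{\tilde p_j,\tilde p_j}(\Omega)$ with $\tilde p_j<p_j$ cannot hold, because it would require an \emph{increase} of the smoothness index: $\tilde s_j=1/\tilde p_j>1/p_j=s_j$. The scaling inequality $s_j-n/p_j>\tilde s_j-n/\tilde p_j$ that you verify is only a necessary condition; Lemma \ref{Besovemb} is stated (and valid) under the standing hypothesis $0<s_1<s_0<\infty$, and no embedding, on $\R^n$ or on a bounded domain, can raise the smoothness index. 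On a bounded domain you may lower $p$ at fixed $s$, or lower $s$ at fixed $p$, but you cannot travel \emph{up} the critical line $s p=1$ as $p$ decreases. This is precisely why the paper does not argue by embedding into a restriction-friendly Besov space. Instead, Lemma \ref{ad1} gives a weaker, Fubini-type consequence that survives for \emph{all} $q<\infty$ (not just $q\le p$): for a.e.\ slice $x'$ and along a well-chosen sequence $t_l\to 0$, $\|\Delta_{t_l}^{M}f_j(x',\cdot)\|_{L^{p_j}}/t_l^{s_j}\to 0$. Combined with the key pointwise inequality \eqref{cf2}, this yields $\|\Delta_{t_le_n}^2f(x',\cdot)\|_{L^1}/t_l\to 0$, and then Lemma \ref{cf4} handles integer-valued one-dimensional functions under just this condition. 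The non-restriction phenomenon (Proposition \ref{l7.26}) is therefore circumvented, not by moving to a better space, but by asking for less from the slices. You would need to adopt this weaker formulation to close the case $s_jp_j=1$, $p_j<q_j<\infty$.
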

\begin{proof}
The case where  $n=1$ is simple. Indeed, by Lemma \ref{B-VMO}  we have $B^{s}_{p,q}\hookrightarrow \VMO$  (and similarly $\sum_{j=1}^k B^{s_j}_{p_j,q_j}\hookrightarrow \VMO$). The conclusion follows from the fact that $\VMO((0,1) ;\Z)$ functions are constant \cite[Step 5, p. 229]{brezisnirenberg1}. 

 We next turn to the general case. Let $f=\sum_{j=1}^kf_j$, with $f_j\in B^{s_j}_{p_j,q_j}(\Omega ; \Z)$, $\fo j\in\llbracket 1, k\rrbracket$. In view of the conclusion, we may assume that $\Omega =(0,1)^n$. By the Sobolev embeddings, we may assume that for all $j$ we have $s_jp_j=1$  (and thus  either $1<p_j<\infty$ and $s_j=1/p_j$, or $p_j=1$ and $s_j=1$) and $1\le q_j<\infty$. Let, as in Lemma \ref{ad1}, $A\subset (0,1)^{n-1}$ be a set of full measure such that \eqref{cf1} holds with $M=2$. The  proof of the lemma relies on the following key implication:
 \be
 \l{cf2}
 [x_1+\cdots+x_k\in\Z,\ 1\le p_1,\ldots, p_k<\infty]\implies |x_1+\cdots+x_k|\lesssim |x_1|^{p_1}+\cdots+|x_k|^{p_k}.
 \ee
 This leads to the following consequence: if $g:=g_1+\cdots+g_k$ is integer-valued, then 
 \be
 \l{aaa1}
 \|\Delta_h^2g\|_{L^1}\lesssim \|\Delta_h^2g_1\|_{L^{p_1}}^{p_1}+\cdots+\|\Delta_h^2g_k\|_{L^{p_k}}^{p_k}.
\ee
By combining \eqref{cf1} with \eqref{aaa1},
 we find that
\be
\l{cf3}
\lim_{l\to\infty}\frac{\left\|\Delta_{t_le_n}^2f(x',\cdot)\right\|_{L^{1}((0,1))}}{t_l}=0,\quad\fo x'\in A,\text{ for some sequence }t_l\to 0.
\ee
 By Lemma \ref{cf4} below, we find that $f(x',\cdot)$ is constant, for every $x'\in A$. By a permutation of the coordinates, we find that
for every $i\in \llbracket 1, n\rrbracket$, the function 
\be
\l{aa2}
\text{$t\mapsto f(x_1,...,x_{i-1},t, x_{i+1},...,x_n)$ is constant, $\fo  i\in \llbracket 1, n\rrbracket$,   a.e. $\widehat x_i \in (0,1)^{n-1}$;} 
\ee
here, $\widehat x_i:=(x_1,...,x_{i-1},x_{i+1},...,x_n) \in (0,1)^{n-1}$.

 We next invoke the fact that every measurable function satisfying \eqref{aa2} is constant  \cite[Lemma 2]{blmn}.            
\end{proof} 
\bl
\l{cf4}
Let $g\in L^1((0,1) ; \Z)$ be such that, for some sequence $t_l\to 0$, we have
\be
\l{cf5}
\lim_{l\to\infty}\frac{\left\|\Delta_{t_l}^2g\right\|_{L^{1}((0,1))}}{t_l}=0.
\ee
Then $g$ is constant.
\el
\bp
In order to explain the main idea, let us first assume that $g=\one_B$ for some $B\subset (0,1)$. Let $h\in (0,1)$. If $x\in B$ and $x+2h\not\in B$, then $\Delta_h^2g(x)$ is odd, and thus $|\Delta_h^2g(x)|\ge 1$. The same holds if $x\not\in B$ and $x+2h\in B$. On the other hand, we have $|\Delta_{2h}g(x)|\le 1$, with equality only when either $x\in B$ and $x+2h\not\in B$, or $x\not\in B$ and $x+2h\in B$. By the preceding, we obtain the inequality 
\be
\l{cf6}|\Delta^2_hg(x)|\ge |\Delta_{2h}g(x)|,\quad\fo x,\, \fo h.
\ee 
Using \eqref{cf5} and \eqref{cf6}, we obtain
\be
\l{cf7}
g'=\lim_{l\to\infty}\frac{\Delta_{2t_l}g}{2t_l}=0.\footnotemark
\ee
\footnotetext{
  In \eqref{cf7}, the first limit is in ${\cal D}'$, the second one in $L^1$.} 
Thus either $g=0$, or $g=1$.

We next turn to the general case. Consider some $k\in\Z$ such that the measure of the set $g^{-1}(\{k\})$ is positive. We may assume that $k=0$, and we will prove that $g=0$. For this purpose, we set $B:=g^{-1}(2\Z)$, and we let $\overline g:=\one_B$. Arguing as above, we have 
$ |\Delta^2_h g(x)|\ge |\Delta_{2h}\overline g(x)|$, $\fo x$, $\fo h$,
and thus $\overline g=0$. We find that $g$ takes only even values. We next consider the integer-valued map $g/2$. By the above, $g/2$ takes only even values, and so on. We find that $g=0$.
\ep

\subsection{Disintegration of the Jacobians}
\l{au1}
${}$

The purpose of this section is to prove and generalize the following result, used in the analysis of Case \ref{Y}.
\bl
\l{at6}
Let $s>1$, $1\le p<\infty$, $1\le q\le p$ and $n\ge 3$, and assume   that $sp\ge 2$. Let $u\in B^s_{p,q}(\Omega ; \so)$ and set $F:=u\wedge\na u$. 
Then $\curl F=0$.

Same conclusion if $s>1$, $1\le p<\infty$, $1\le q\le \infty$ and $n\ge 2$,  and we have $sp>2$.

Same conclusion if $s>1$, $1\le p<\infty$, $1\le q< \infty$ and $n=2$,  and we have $sp=2$.
\el

In view of the conclusion, we may assume that $\Omega=(0,1)^n$.

Note that in the above we have $n\ge 2$; for $n=1$ there is nothing to prove. 

Since the results we present in this section are of independent interest, we go beyond what is actually needed in Case \ref{Y}.

The conclusion of (the generalization of) Lemma \ref{at6} relies on three ingredients. The first one is that it is possible to define, as a distribution, the product $F: =u\wedge\na u$ for $u$ in a low regularity Besov space; this goes back to \cite{lddjr} when $n=2$, and the case where $n\ge 3$ is treated in \cite{bousquetmironescu}.  The second one is a Fubini (disintegration) type result for the distribution $\curl F$. Again, this result holds even in Besov spaces with lower regularity than the ones in Lemma \ref{at6}; see Lemma \ref{mo2} below. The final ingredient is  the fact that when  $u\in\VMO((0,1)^2 ; \so)$ we have $\curl F=0$; see Lemma \ref{mo3}. Lemma \ref{at6} is obtained by combining Lemmas \ref{mo2} and \ref{mo3} via a dimensional reduction (slicing) based on Lemma \ref{mo7}; a more general result is presented in Lemma \ref{mo4}.

Now let us proceed. First, following \cite{lddjr} and \cite{bousquetmironescu}, we explain how to define the Jacobian $Ju:=1/2\curl F$ of low regularity unimodular maps $u\in W^{1/p,p}((0,1)^n ; \so)$, with $1\le p<\infty$.\footnote{ In \cite{lddjr} and \cite{bousquetmironescu}, maps are from ${\mathbb S}^n$ (instead of $(0,1)^n$) into $\so$, but this is not relevant for the validity of the results we present here.} Assume first that $n=2$ and that $u$ is smooth. Then, in the distributions sense, we have
\be
\l{oa2}
\begin{aligned}
\la Ju,\zeta\ra&=\frac 12\int_{(0,1)^2}\curl F\, \zeta=-\frac 12\int_{(0,1)^2}\na\zeta\wedge(u\wedge\na u)\\&=\frac 12\int_{(0,1)^2}[(u\wedge\p_1u)\p_2\zeta-(u\wedge\p_2u)\p_1\zeta]\\
&=\frac 12\int_{(0,1)^2}(u_1\na u_2\wedge\na\zeta-u_2\na u_1\wedge\na\zeta),\quad\fo \zeta\in C^\infty_c((0,1)^2). 
\end{aligned}
\ee
In higher dimensions, it is better to identify $Ju$ with the $2$-form (or rather a $2$-current) $Ju\equiv 1/2\,  d(u\wedge du)$.\footnote{ We recover the two-dimensional formula \eqref{oa2} via the usual identification of $2$-forms on $(0,1)^2$ with scalar functions (with the help of the Hodge $\ast$-operator).} With this identification and modulo the action of the Hodge $\ast$-operator, $Ju$ acts  
either or $(n-2)$-forms, or on $2$-forms. The former point of view is usually adopted, and is expressed by the formula
\be
\l{oa3}
\begin{aligned}
\la Ju,\zeta\ra&=\frac {(-1)^{n-1}}2\int_{(0,1)^n}d\zeta\wedge(u\wedge\na u)\\
&=\frac {(-1)^{n-1}}2\int_{(0,1)^n}d\zeta\wedge(u_1\, du_2-u_2\, du_1),\quad\fo \zeta\in C^\infty_c(\Lambda^{n-2}(0,1)^n).
\footnotemark
\end{aligned}
\ee
\footnotetext{ Here, $C^\infty_c(\Lambda^{n-2}(0,1)^n)$ denotes the space of smooth compactly supported $(n-2)$-forms on $(0,1)^n$.}
The starting point in   extending the above formula to lower regularity maps $u$ is provided by the identity \eqref{oa4} below; when  $u$ is smooth, \eqref{oa4}   is obtained  by a simple integration by parts.
More specifically, consider any smooth extension  $U:(0,1)^n\times [0,\infty)\to\C$, respectively $\varsigma\in C^\infty_c(\Lambda^{n-2}((0,1)^n\times [0,\infty)))$ of $u$, respectively of $\zeta$.\footnote{ We do not claim that $U$ is $\so$-valued. When $u$ is not smooth, existence of $\so$-valued extensions is a delicate matter \cite{soreview}.} Then we have the identity \cite[Lemma 5.5]{bousquetmironescu}
\be
\l{oa4}
\la Ju,\zeta\ra=(-1)^{n-1}\int_{(0,1)^n\times (0,\infty)}d\varsigma\wedge\ dU_1\wedge dU_2.
\ee
For a low regularity $u$ and for a well-chosen $U$, we take the right-hand side of \eqref{oa4} as the definition of $Ju$. More specifically, let $\Phi\in C^\infty(\R^2; \R^2)$ be such that $\Phi(z)=z/|z|$ when $|z|\ge 1/2$, and let $v$ be a standard extension of $u$ by averages, i.e., $v(x,\ve)=u\ast\rho_\ve(x)$, $x\in (0,1)^n$, $\ve>0$, with $\rho$ a standard mollifier. Set $U:=\Phi(v)$. With this choice of $U$, the right-hand side of \eqref{oa4} does not depend on $\varsigma$ (once $\zeta$ is fixed) \cite[Lemma 5.4]{bousquetmironescu} and the map $u\mapsto Ju$ is continuous from $W^{1/p,p}((0,1)^n ; \so)$ into the set of $2$- (or $(n-2)$-)currents. When $p=1$, continuity is straightforward. For the continuity when $p>1$, see \cite[Theorem 1.1 item 2]{bousquetmironescu}. In addition, when $u$ is sufficiently smooth (for example when $u\in W^{1,1}((0,1)^n ; \so)$), $Ju$ coincides\footnote{ Up to the action of the $\ast$ operator.} with $\curl F$ \cite[Theorem 1.1 item 1]{bousquetmironescu}. Finally, we have the estimate \cite[Theorem 1.1 item 3]{bousquetmironescu}
\be
\l{oa6}
|\la Ju,\zeta\ra|\lesssim |u|_{W^{1/p,p}}^p\|d\zeta\|_{L^\infty},\quad\fo \zeta\in C^\infty_c(\Lambda^{n-2}(0,1)^n).
\ee

We are now in position to explain disintegration along two-planes. We use the notation in Section \ref{mo6}. Let $u\in W^{1/p,p}((0,1)^n ; \so)$, with $n\ge 3$. Let $\alpha\in I(n-2, n)$. Then for a.e. $x_\alpha\in (0,1)^{n-2}$, the partial map $u_\alpha(x_\alpha)$ belongs to $W^{1/p,p}((0,1)^2 ; \so)$ (Lemma \ref{oa1}), and therefore $Ju_\alpha(x_\alpha)$ makes sense and acts on functions.\footnote{ Or rather on $2$-forms, in order to be consistent with our construction in dimension $\ge 3$.} Let now $\zeta\in C^\infty_c(\Lambda^{n-2}(0,1)^n)$. Then we may write
\bes
\zeta=\sum_{\alpha\in I(n-2,n)}\zeta^\alpha\, dx^{\alpha}=\sum_{\alpha\in I(n-2,n)}\left(\zeta^\alpha\right)_\alpha(x_{\overline\alpha})\, dx^{\alpha}.
\ees
Here,  $dx^{\alpha}$ is the canonical $(n-2)$-form induced by the coordinates $x_j$, $j\in\alpha$, and  $(\zeta^\alpha)_\alpha(x_{\overline\alpha})=\zeta^\alpha(x_\alpha, x_{\overline\alpha})$ belongs to $C^\infty_c((0,1)^2)$ (for fixed $x_\alpha$).

We next note the following formal calculation. 
Fix $\alpha\in I(n-2,n)$, and let $\overline\alpha=\{ j, k\}$, with $j<k$. Then 
\bes
\begin{aligned}
2(-1)^{n-1}\la Ju,\zeta^\alpha\, dx^\alpha\ra&=\int_{(0,1)^n}d(\zeta^\alpha\, dx^\alpha)\wedge(u\wedge \na u)\\
&=\int_{(0,1)^n}(\p_j\zeta^\alpha\,  dx_j+\p_k\zeta^\alpha\, dx_k)\wedge dx^\alpha\wedge u\wedge (\p_j u\, dx_j+\p_k u\, dx_k)\\
&=\int_{(0,1)^n}(\p_j\zeta^\alpha\, u\wedge \p_k u-\p_k\zeta^\alpha\,  u\wedge \p_j u)\, dx_j\wedge dx^\alpha\wedge dx_k,
\end{aligned}
\ees
that is,
\be
\l{oc2}
\la Ju,\zeta\ra=\frac 12\ \sum_{\alpha\in I(n-2,n)}\ve(\alpha)\int_{(0,1)^{n-2}}\la Ju_\alpha,\left(\zeta^\alpha\right)_\alpha(x_\alpha)\ra\, dx_\alpha, 
\ee
where $\ve(\alpha)\in \{-1, 1\}$ depends on $\alpha$. 

When $u\in W^{1,1}((0,1)^n ; \so)$,  it is easy to see  that \eqref{oc2} is true (by Fubini's theorem). The validity of  \eqref{oc2} under weaker regularity assumptions is the content of our next result.
 \bl
\l{mo2}
Let $1\le p<\infty$ and $n\ge 3$. Let $u\in W^{1/p,p}((0,1)^n ; \so)$. Then \eqref{oc2} holds.
\el
\bp
The case $p=1$ being clear, we may assume that $1<p<\infty$. We may also assume that $\zeta=\zeta^\alpha\, dx^\alpha$ for some fixed $\alpha\in I(n-2,n)$. A first ingredient of the proof of \eqref{oc2} is  the density of $W^{1,1}((0,1)^n ; \so)\cap W^{1/p,p}((0,1)^n ; \so)$ into $W^{1/p,p}((0,1)^n ; \so)$ \cite[Lemma 23]{bbmihes}, \cite[Lemma A.1]{lddjr}. Next, we note that the left-hand side of \eqref{oc2} is continuous with respect to the $W^{1/p,p}$ convergence  of unimodular maps \cite[Theorem 1.1 item 2]{bousquetmironescu}. In addition, as we noted, \eqref{oc2} holds when $u\in W^{1,1}((0,1)^n ; \so)$. Therefore, it suffices to prove that the right-hand side of \eqref{oc2} is continuous with respect to $W^{1/p,p}$ convergence of $\so$-valued maps. This is proved as follows. Let $u_j, u\in W^{1/p,p}((0,1)^n ; \so)$ be such $u_j\to u$ in $W^{1/p,p}$. By a standard argument,  since the right-hand side of \eqref{oc2} is uniformly bounded with respect to $j$ by \eqref{oa6},  it suffices to prove that the right-hand side of \eqref{oc2} corresponding to $u_j$ tends to the one corresponding to $u$ possibly along a subsequence. 

In turn, convergence up to a subsequence is proved as follows. Recall the following vector-valued version  of the \enquote{converse} to the dominated convergence theorem \cite[Theorem 4.9, p. 94]{brezisfa}.  If $X$ is a Banach space, $\omega$ a measured space and $f_j\to f$ in $L^p(\omega, X)$, then (possibly along a subsequence)  for a.e. $\varpi\in\omega$ we have $f_j(\varpi,\cdot )\to f(\varpi,\cdot)$ in $X$, and in addition there exists some $g\in L^p(\omega)$ such that $\|f_j(\varpi,\cdot)\|_{X}\le g(\varpi)$ for a.e. $\varpi\in \omega$. 

Using the above and Lemma \ref{oa1} item 2 (applied with $s=1/p$), we find that, up to a subsequence, we have 
\be
\l{oc3}
(u_j)_\alpha(x_\alpha)\to u_\alpha(x_\alpha)\text{ in }W^{1/p,p}((0,1)^2 ; \so)\text{ for a.e. }x_\alpha\in (0,1)^{n-2},
\ee 
and in addition we have, for some  $g\in L^p((0,1)^{n-2})$,
\be
\l{oc4}
|(u_j)_\alpha(x_\alpha)|_{W^{1/p,p}((0,1)^2)}\le g(x_\alpha)\text{ for a.e. }x_\alpha\in(0,1)^{n-2}.
\ee
The continuity of the right-hand side of \eqref{oc2} (along some subsequence) is obtained by combining \eqref{oc3} and \eqref{oc4} with \eqref{oa6} (applied with $n=2$).\footnote{In order to be complete, we should also check that the right-hand side of \eqref{oc2} is measurable with respect to $x_\alpha$. This is clear when $u\in W^{1,1}((0,1)^n ; \so)$. The general case follows by density and \eqref{oc3}.}  
\ep
\bl
\l{mo3}
Let $1\le p<\infty$. Let $u\in W^{1/p,p}\cap\VMO((0,1)^2 ; \so)$. Then $Ju=0$.
\el
\bp
Assume first that in addition we have $u\in C^\infty$. Then $u=e^{\im\va}$ for some $\va\in C^\infty$, and thus $Ju=1/2\curl (u\wedge\na u)=1/2\curl\na\va=0$. 

We now turn to the general case. Let $F(x,\ve):=u\ast\rho_\ve(x)$, with $\rho$ a standard mollifier. Since $u\in\VMO((0,1)^2 ; \so)$, there exists some  $\delta>0$ such that $1/2<|F(x,\ve)|\le 1$ when $0<\ve<\delta$ (see \eqref{boundsv} and the discussion in Case \ref{X}). Let $\Phi\in C^\infty(\R^2 ; \R^2)$ be such that $\Phi(z):=z/|z|$ when $|z|\ge 1/2$,  and define $F_\ve(x):=F(x,\ve)$ and $u_\ve:=\Phi\circ F_\ve$, $\fo 0<\ve<\delta$. Then $F_\ve\to u$ in $W^{1/p,p}$ and (by Lemma \ref{eipsi} when $p>1$, respectively by a straightforward argument when $p=1$) we have $u_\ve=\Phi(F_\ve)\to \Phi(u)=u$  in $W^{1/p,p}((0,1)^2 ; \so)$ as $\ve\to 0$. Since (by the beginning of the proof) we have $Ju_\ve=0$, we conclude via the continuity of $J$ in $W^{1/p,p}((0,1)^2 ; \so)$ \cite[Theorem 1.1 item 2]{bousquetmironescu}. 
\ep
We may now state and prove  the following generalization of Lemma \ref{at6}.
\bl
\l{mo4}
Let $s>0$, $1\le p<\infty$, $1\le q\le  p$, $n\ge 3$, and assume that  $sp\ge 2$. Let $u\in B^s_{p,q}(\Omega ; \so)$. Then $Ju=0$. 

Same conclusion if $s>0$, $1\le p<\infty$, $1\le q\le\infty$, $n\ge 2$,   and we have $sp>2$.

Same conclusion if $s>0$, $1\le p<\infty$,  $1\le q<\infty$, $n= 2$,   and we have $sp=2$.
\el
\bp
We may assume that $\Omega=(0,1)^n$. 
By the Sobolev embeddings (Lemma \ref{Besovemb}), it suffices to consider the limiting case where:
\ben
\item
$s>0$, $1\le p<\infty$, $1\le q<\infty$, $n=2$, and $sp=2$.

Or
\item
$s>0$, $1\le p<\infty$, $q= p$, $n\ge 3$, and $sp=2$.
\een
In view of Lemmas \ref{Besovemb} and \ref{B-VMO}, the case where $n=2$ is covered by Lemma \ref{mo3}. Assume that $n\ge 3$. Then the desired conclusion is obtained by combining Lemmas \ref{oa1}, \ref{mo7}, \ref{mo2} and \ref{mo3}.
\ep
\br
\l{oc1}
Arguments similar to the one developed in this section lead to the conclusion that the Jacobians of maps $u\in W^{s,p}((0,1)^n ;  {\mathbb S}^k)$, defined when $sp\ge k$ \cite{lddjr}, \cite{bousquetmironescu},  disintegrate over $(k+1)$-planes.
When $s=1$ and $p\ge k$, this assertion is implicit in \cite[Proof of Proposition 2.2, pp. 701-704]{isobe2}.
\er



          \end{document}